\newtheorem{thm}{Theorem}[section]
\newtheorem{prop}[thm]{Proposition}
\newtheorem{lem}[thm]{Lemma}
\theoremstyle{definition}
\newtheorem{defn}[thm]{Definition}
\newtheorem{rem}[thm]{Remark}
\newtheorem{example}[thm]{Example}
\newcommand{\be}{\begin{equation}}
\newcommand{\ee}{\end{equation}}
\newcommand{\R}{\mathbb{R}}
\newcommand{\N}{\mathbb{N}}
\newcommand{\E}{\mathbb{E}}
\newcommand{\p}{\mathbb{P}}
\def \o {{  {\mathcal{O}} }}
\def \calf {{  {\mathcal{F}} }}
\def \calb {{  {\mathcal{B}}  }}
\def \calk {{  {\mathcal{K}}  }}
\def \caly {{  {\mathcal{Y}}  }}
 \def \call {{  {\mathcal{L}}  }}
\begin{document}

\baselineskip=1 \baselineskip

\begin{center}
 {  \bf 
Well-posedness 
 of 
  Fractional
 Stochastic 
 $p$-Laplace Equations 
Driven by  Superlinear
Transport  Noise 
   }
\end{center}

\medskip

\medskip

\begin{center}
 Bixiang Wang  
\vspace{1mm}\\
Department of Mathematics, New Mexico Institute of Mining and
Technology \vspace{1mm}\\ Socorro,  NM~87801, USA \vspace{1mm}\\
Email: bwang@nmt.edu\vspace{2mm}\\
\end{center}
%



 \vspace{3mm}

\begin{abstract}  
  In this paper, we prove the
  existence
  and uniqueness  of  solutions
  of the fractional $p$-Laplace equation
   with a    polynomial drift
  of arbitrary order driven by 
  superlinear transport  noise.
  By the monotone argument, we first prove
  the existence and uniqueness of    solutions
      of an abstract stochastic differential equation
     satisfying a fully local monotonicity condition.
     We then  apply the abstract result
     to the  fractional  stochastic $p$-Laplace equation
     defined in a bounded domain.
     The main difficulty is to establish the
     tightness as well as   the uniform integrability
     of a sequence of approximate solutions
     defined by the Galerkin method.
     To obtain the necessary uniform estimates,
     we  employ the   Skorokhod-Jakubowski representation
       theorem on a topological space  instead of a metric space.
       Since the strong 
       Skorokhod  representation
       theorem is incorrect even in a complete separable metric
       space,  we 
       pass to the limit of stochastic integrals
         against a sequence
       of Wiener processes by a weak convergence argument.
     \end{abstract}

{\bf Key words.}   Monotonicity;  tightness;
well-posedness;    transport noise;
Skorokhod-Jakubowski
       theorem;
 fractional $p$-Laplace   equation. 

{\bf MSC 2020.}   60F10, 60H15, 37L55, 35R60.

\section{Introduction}
\setcounter{equation}{0}

This paper is concerned with 
the existence and uniqueness of solutions
of the fractional $p$-Laplace equations
with a polynomial drift of any order
driven by superlinear transport noise.

Let $\o$ be a bounded domain in $\R^n$
and   $s\in (0,1)$.
Consider the  following    It\^{o}  
stochastic equation for  $x\in \o$ and 
  $t>0$:
 \begin{align}\label{lap1}
du(t,x) +(-\Delta)_p^su (t,x)
dt
= (f(t,x,u(t,x))+ h(t,x,u(t,x))) dt + B(t,x, u(t,x)) dW,
\end{align}
  with  boundary condition
  \be\label{lap2} 
  u(t,x) = 0, \quad x\in \R^n\setminus \o, \ t>0,
  \ee
  and initial condition
 \be\label{lap3}
 u( 0, x ) = u_0  (x),   \quad x\in  \R^n ,
 \ee 
  where  
 $(-\Delta)_p^s$ with $0<s<1$ and $2\leq p<\infty$ is the
 \emph{non-linear}, \emph{non-local}, \emph{fractional} $p$-Laplace operator,    
 $f: \R \times \R^n \times \R
 \to \R$ is a polynomial  nonlinearity
 with  arbitrary  growth    in its third argument,
   $h: \R \times \R^n \times \R
 \to \R$ is a    Lipschitz function 
 in its third argument.
 The diffusion coefficient $B$
 is a superlinear   function which may depend
 on   the  spatial derivatives of $u$, and
 $W$ is
 a
  cylindrical Wiener process
 in a separable  Hilbert space $U$
 defined  
 on a  complete filtered probability space
 $(\Omega, \mathcal{F}, \{ { \mathcal{F}} _t\} _{t\in \R},  \p  )$.

 Fractional partial differential equations 
     arise
     from
     anomalous diffusion 
     processes and have
     a variety of  applications in probability
      and 
   physics, see e.g., 
  \cite{abe1,  garr1,  guan2, 
    jara1, kos1}.
Such equations have been
extensively studied in
the literature,
see, e.g.,   
   \cite{abe1, bra1,  caff1,  dine1,  gal1, garr1, guan2, 
   hau1,  jara1, kos1,  luh1,   ros1,  ser2, war1, war2, val1}
for deterministic equations
and    
     \cite{gu1, luh2, ngu1,  wan1, wan2,   wanr2}
     for stochastic equations with linearly growing
     noise.
     In the present paper, we will
     apply the monotone method to
     investigate the existence
     and uniqueness of solutions
     of \eqref{lap1}-\eqref{lap3}
     when  $f$ 
     has a polynomial growth 
     of  an arbitrary  order $q\ge 2$ and $B$
     is a superlinear transport
     noise.
     
 Let
     $H
    =\{ u\in L^2(\R^n):
    u=0 \text{ a e. on } \R^n \setminus \o
    \}$, 
    $V_1 =\{ u\in W^{s,p} (\R^n):
    u=0 \text{ a e. on } \R^n \setminus \o
    \}$ , 
     $V_2 =\{ u\in L^q  (\R^n):
    u=0 \text{ a e. on } \R^n \setminus \o
    \}$, 
    and
    $\call_2(U,H)$ be the
   space of Hilbert-Schmidt
   operators from $U$ to $H$.
    The spaces  $V_1$ and $V_2$
    are used to deal with 
    the $p$-Laplace 
    operator $(-\Delta)^s_p$
    and the    polynomial nonlinearity
    $f$, respectively.
 Denote  
   the dual spaces of
    $H$, $V_1$ and $V_2$ by
    $H^*$, $V_1^*$
    and $V_2^*$, respectively.
     Let $A_1(t) = - (-\Delta)^s_p $,
    $A_2(t)= f(t, \cdot, \cdot)$
    and
    $A_3(t)= h(t, \cdot, \cdot)$.
    Then system \eqref{lap1}-\eqref{lap3}
    can be recast into the form:
    \be\label{lap4}
    du (t) = \sum_{j=1}^3 A_j (t) dt
    + B(t,\cdot, u(t)) dW \ \text{ in }\ (V_1\bigcap
    V_2)^*,
    \ee
    with $u(0) =u_0\in H$, where
    $  (V_1\bigcap
    V_2)^*$ is the dual space
    of  $ V_1\bigcap
    V_2$.
    
    In the next section,
    we actually study a more
    general stochastic equation
   that includes  \eqref{lap4}
   as a special case.
    We will assume the nonlinear
    terms in \eqref{lap4} satisfy
    the fully local monotonicity
    condition in the sense that
    for all
     $t\in [0,T]$ and  $u,v\in 
    V_1 \bigcap V_2$,
  $$
    2\sum_{j=1}^3
    (A_j(t, u)- A_j(t,v),
    u-v)_{(V_j^*, V_j)}
    +\| B(t,u)-B(t,v)\|
    _{\call_2(U,H)}^2
$$
   \be\label{lap5}
    \le  (g(t)
    + \varphi (u)
    +\psi (v))
    \| u-v\|_H^2,
\ee
    where $g\in L^1([0,T])$,
   $\varphi$
   and $\psi$ are functionals
   defined on 
   $V_1 \bigcap V_2$
   satisfying certain
   growth conditions
   (see {\bf (H2)}) in the next section
    for more
   details).
    
    We remark that the  well-posedness of
    \eqref{lap4} was already established in
    \cite{wanr1} under the condition  
    \be\label{lap6}
    \text{either} \ \ \varphi (u) \equiv 0
    \ \ \text{or} \  \ \psi (v) \equiv 0.
    \ee
    However, for a general 
    nonlinearity $f$ of polynomial type in \eqref{lap4},
  the  condition \eqref{lap6} is not satisfied, and hence
  the result of \cite{wanr1}
  does not apply in such a  case.
  The goal of this paper is to prove the
  existence and uniqueness of solutions
  of
  \eqref{lap4} when both $\varphi$ and $\psi$
  are nonzero.
  
  Note that
  if the growth rate  $q$  of  $f$ is restricted
    such that $V_1$ is embedded into
    $V_2$, then
    the stochastic equation \eqref{lap4}
    reduces to the following one defined in
    $V_1^*$ instead of  $(V_1\bigcap
    V_2)^*$:
     \be\label{lap7}
    du (t) = \sum_{j=1}^3 A_j (t) dt
    + B(t,\cdot, u(t)) dW \ \text{ in }\  V_1^*.
    \ee
   Because the growth rate of   $f$
is   arbitrary  in this paper,
the space $V_1$ is not necessarily embedded
into $V_2$, which means that
  we must  use  
  $V_1$ as well as  $V_2$, and study the
  stochastic equation \eqref{lap4}
  rather than \eqref{lap7}.

  The well-posedness of the stochastic equation
  \eqref{lap7}
  has been extensively studied in the literature
  by the monotone argument,
 see, e.g., 
 \cite{kry1, liu1, liu2, liu3, par1, zha1}
 and the references therein.
 In particular,  
  the existence and uniqueness of solutions
  of \eqref{lap7} was established in
  \cite{liu1, liu2, liu3}
   under   the  condition \eqref{lap6}.
  When both $\varphi$ and $\psi$
  in \eqref{lap5}
  are nonzero, the well-posedness of
  \eqref{lap7} was established first
  in \cite{liu4} for additive noise,
  and very recently in \cite{roc1} for
  general nonlinear noise.

  The result of \cite{roc1} solved a longstanding
  open problem on the existence and uniqueness
  of solutions of \eqref{lap7}
  under  the fully local monotonicity condition
  \eqref{lap5}.
  However, there are a couple of gaps  and
  limitations on the results of \cite{roc1}
  as listed below and we will solve these problems
  in the present paper:

    (i).    The
       authors of \cite{roc1} used the
      {  strong}   Skorokhod  representation
       theorem  to pass to the limit of a sequence
       of approximate solutions.
       However,
       the  { strong}   Skorokhod  representation
       theorem   
        does not hold true  as  proved very recently
        by 
        Ondrejat and  Seidler in \cite{ond1}
         (see also \cite{pec1}).        
         In this paper, under 
         additional assumptions
         on the continuity of noise 
         coefficients
         with respect to an appropriate
         topology, we will  deal with  the problem
        by applying the standard
        Skorokhod  representation
       theorem.

      This not only makes it more complicated
      to pass to the limit of 
      stochastic integrals related to a sequence of
      approximate solutions,
      but also requires more restricted conditions
      on the coefficients of noise.
      In this paper, 
      by  using the idea of weak convergence,       
     under additional conditions on
      the continuity of    the diffusion term $B$,
      we first prove
      $v^* B(\cdot, \widetilde{u}_{n})$
      converges to
      $v^* B(\cdot, \widetilde{u})$
      in $L^2([0,T]\times \widetilde{\Omega},
     \call_2(U,\R))$ for every
      $v\in V_1 \bigcap V_2$, where $v^*$ is the element
      in $H^*$ identified with $v\in H$
      by Riesz's representation theorem.
      We then prove the convergence
      of the stochastic integrals
     $ \int_0^\cdot v^* B(\cdot, \widetilde{u}_{n}) d
      \widetilde{W}_n$
      in $L^2(0,T; \R)$ in probability
       (see  the proof of \eqref{limeq}).

  (ii).  The proof  of \cite{roc1}
  for 
  the tightness  of solutions in  $L^r(0.T; H)$
  is incomplete. We will  solve  the problem
  by  using an appropriate embedding 
  theorem.

  By   \cite[Theorem 5]{sim1} we know that
  a subset $\caly$ of 
  $L^r(0.T; V_1)$ is  relatively compact
  in
  $L^r(0.T; H)$ 
   if 
 $\caly$ is bounded in $L^r(0,T;V_1)$
 and
 \be\label{lap10}
 \lim_{\delta \to 0^+}
 \sup_{g\in \caly}
 \int_0^{T-\delta}
 \| g(t+\delta) -g(t)\|^r_H dt =0.
 \ee
     It is clear that
     \eqref{lap10} is equivalent to: 
     \be\label{lap11}
 \lim_{k\to \infty }
 \sup_{g\in \caly}
 \int_0^{T-\delta_k}
 \| g(t+\delta_k) -g(t)\|^r_H dt =0,
 \quad \text{for every  sequence } \ 
 \delta_k \to 0^+.
 \ee
The authors of \cite{roc1}
 only proved  that
 there exists a particular sequence
 $\delta_k \to 0^+$ such that
 \eqref{lap11} is fulfilled,   
 see \cite[(5.58),  p. 3467]{roc1},
 but  did not prove \eqref{lap11}
 holds  for  {  every}
 sequence  $\delta_k \to  0^+$.
 Because of this, 
  the relative compactness
 of the subset $\caly$ constructed in
 (5.58) in \cite{roc1} is unknown.
    
  In this paper, 
  we will    find a
  new approach to establish  
     the tightness of  solutions
      in  $L^r(0.T; H)$,
      which is a key for
      proving the main result of the paper.
      To that end, we  carefully
      choose a separable Hilbert space 
      $ \mathscr{H}$ as in \cite{brz1}
       such that: 
       
        (1) $ \mathscr{H}$ is densely
      and continuously embedded into
      $V_1 \bigcap V_2$;
      and
       
        (2)  $ \mathscr{H}$ has   an
        orthogonal  basis which also 
          forms an
      orthonormal basis of 
     $H$.\\
    Then by using the uniform estimates of
    solutions, we construct a set 
    $\calk$   such that
    $\calk$ is bounded   both in
     $L^r
    (0,T; V_1\bigcap V_2)$
    and  in $W^{\sigma, 2}(0,T; \mathscr{H}^*)$
  with  $ \sigma \in  (  {\frac 12}-{\frac 1r}, \
  {\frac 12})$.
    Since the embedding
    from  $V_1\bigcap V_2$ to $H$ is compact,
    by \cite[Corollary 5]{sim1} we conclude the
    set $\calk$ is relatively compact
    in $L^r
    (0,T; H)$,  from which 
    we are able to obtain the tightness of the
    sequence of approximate solutions
    in 
     $L^r
    (0,T; H)$ (see Lemma \ref{tig1} for more details).

  (iii).  The authors of \cite{roc1}
  used the 
   Skorokhod  representation
       theorem in a   metric  space; 
       while in  
      this paper, 
       we will use   the 
        Skorokhod-Jakubowski representation
       theorem in  a  non-metric space,
       which will  not only 
       provide stronger uniform estimates of solutions
       but also weaken the assumptions on the noise
       coefficients.

  The Vitali theorem and the Lebesgue
  dominated convergence will be frequently
  used to prove the limit of a  sequence
  of approximate solutions is a solution of
  the original  equation by the monotone method.
  To apply the Vitali theorem, we need to
  establish the uniform integrablity 
   of approximate
  solutions in certain Banach spaces;
  while to apply the Lebesgue dominated
  convergence theorem, we need to
  control the approximate sequence by a Lebesgue
  integrable function.
  In both cases, we have to 
  use the     Skorokhod representation theorem
  in   the space
  $ L_{w^{*}}^{\infty}(0, T ; H)
  \bigcap L^r(0,T; H)$, 
  where
   $ L_{w^{*}}^{\infty}(0, T ; H)$
     is the space
   $ L  ^{\infty}(0, T ; H)$
  endowed with the
  weak-* topology.

  Note that the space $ L_{w^{*}}^{\infty}(0, T ; H)$
  is not  metrizable, and 
  hence 
    we cannot apply the classical
    Skorokhod representation theorem in a metric
    space.
     Instead,   we must employ the
     Skorokhod-Jakubowski
      representation theorem in 
      a topological space.

    It is worth mentioning that
    the tightness of the sequence of
    approximate solutions in
    $ L_{w^{*}}^{\infty}(0, T ; H)$ implies  
    the existence of  
  a  sequence of random variables
  defined in a new probability space, 
  which is pathwise bounded
    in $ L ^{\infty}(0, T ; H)$.
    This fact
  is frequently used to verify the uniform
  integrability of functions 
  when taking the limit of   approximate
  solutions, see Remark \ref{2.8a}
  for more details.

  (iv).  The growth
  rate of polynomial nonlinearity
  in \cite{roc1} is  restricted,
  which cannot be  arbitrarily large.  
  In the present paper, 
  we will remove this restriction,
  and  deal with 
   polynomial nonlinearity of arbitrary order.

   The results of this paper apply to
   a wide class of stochastic partial
   differential equations with 
   polynomial nonlinearity of arbitrary order,
     which include the $p$-Laplace equation
  with polynomial nonlinearity and the
  three-dimensional
  tamed Navier-Stokes equation
  with polynomial nonlinearity.
 In addition, 
  all the stochastic equations
  with transport noise considered in
  \cite{roc1} fit into 
  the framework of this paper,
  which include the 
  two-dimensional Navier-Stokes equations,
  Allen-Cahn equations, Cahn-Hilliard equations,
  Allen-Cahn-Navier-Stokes equations
  and many others.

      We mention that when the noise
      has a subcritical growth
      and does not depend on
      the spatial derivatives 
      of solutions, then the existence
      of martingale solutions
      of the fractional reaction-diffusion equation
      was established in \cite{wan3}.
      In this paper, we deal with
      the fractional $p$-Laplace equation
      when the noise
      has a critical growth
     which may 
       depend on
      the spatial derivatives  of solutions.

  In the next section, 
  by the monotone argument and
  the Galerkin finite-dimensional approximation,
  we prove the  existence and uniqueness
  of solutions of an abstract stochastic equation
 which satisfies the fully local monotonicity condition
 (see Theorem \ref{main}).
 This  result  
  can be used to deal with
 polynomial nonlinearity of any order, and is
   an extension of \cite{roc1}.
   In Section 3,
   we  apply Theorem \ref{main}
   to deal with the fractional $p$-Laplace
   equation \eqref{lap1}
   driven by superlinear noise. We first
   establish the well-posedness of \eqref{lap1}-\eqref{lap3}
   when the nonlinear term $f$ satisfies a general
   monotonicity condition with polynomial growth 
   of any order (see Theorem \ref{ma1pla}).
   We then  improve  the result 
   when $f$ satisfies a stronger  monotonicity condition
   (see Theorem \ref{ma2pla}).
   Finally, we deal with the
   standard fractional Laplace equation
   with $p=2$ driven by transport noise
   (see Theorem \ref{ma3pla}).

After the first version of    this paper
was posed on arXiv,  the authors of \cite{roc1}
informed the present author that they were aware of the gaps
in their paper and would
put an  erratum to their  original paper to fix the problems.

\section{Well-posedness  of an abstract equation}
\setcounter{equation}{0}

           In this section, we prove the existence
           and uniqueness 
           of   solutions
         to  an abstract  stochastic equation
         which satisfies the fully local
         monotonicity condition.
           We first  define a sequence of
           approximate solutions
           by   the Galerkin method,
           and   
              derive the uniform
           estimates of   these solutions.
           We then
         prove  the tightness
           of distributions of approximate  solutions,
           and apply the 
           Skorokhod-Jakubowski
      representation theorem in 
     topological spaces to obtain 
     the existence of solutions  
           by 
           the monotone argument
           as in \cite{roc1}.

\subsection{Assumptions and main results}

Throughout this section,
we assume 
     $H$ is  a separable Hilbert space with norm
      $\| \cdot \|_H$ and inner product $(\cdot, \cdot)_H$,
      and  
      $V_j$ is a separable  reflexive Banach space
      with norm $\| \cdot \|_{V_j}$ 
      for      $j=1,2\cdots J$.
       Let $H^*$ and  $V_j^*$   be the dual spaces of
      $H$ and  $V_j$ 
      with  
      duality pairings
      $(\cdot, \cdot)_{(H^*, H)}$,
      and $(\cdot, \cdot)_{(V_j^*, V_j)}$,
        respectively.

      We further  assume 
       $H$ and $V_j$   are continuously embedded in a Hausdorff topological vector space  $\mathbb{X}$.
 Then 
      $V=\bigcap_{j=1}^J V_j$
      is well-defined which is  a separable Banach space
       with norm
      $\|\cdot \|_V
      =\sum_{j=1}^J \|\cdot \|_{V_j}$. 
       In the sequel,
      we  assume  $V$ is  also  reflexive.
      The
      duality pairing between $V$ and
      its dual space $V^*$ is written as
       $(\cdot, \cdot)_{(V^*, V)}$.
       
     Suppose 
       $V$ is densely and continuously  embedded into $H$.
       Then by 
       identifying $H$  with $H^*$
       by the Riesz  representation theorem, we have
      $$ V \subseteq H \equiv H^* \subseteq  V^*,
       $$
    which means that  for all $h\in H$ and $v\in V$,  
       $$
       (h,v)_{(V^*, V)} 
       = (h, v)_{H}.
       $$

       Note that 
          for all  $j=1,2, \cdots,  J$,
       $V$
       is continuously embedded into
       $V_j$ and $V_j^*$ is continuously embedded
       into $V^*$, and hence 
           $\sum_{j=1}^J V_j^*$ is  also
           continuously embedded into   $V^*$.
For convenience,  we denote  the 
       norm of   $\sum_{j=1}^J V_j^*$ 
       by
       $$
       \| f \|_{\sum_{j=1}^J V_j^*}
       =\inf \left  \{
       \sum_{j=1}^J \|f_j\|_{V_j^*},
       \  f=\sum_{j=1}^J f_j , \ f_j \in V_j^* \ \text{ for } \ j=1,2, \cdots,
       J
       \right \},
       \quad   \ f\in \sum_{j=1}^J V_j^*.
       $$
    Notice that
     for all  $v\in V$ and $f=\sum_{j=1}^J f_j
     $
          with $f_j\in V_j^*$,  we have
        $$
        (f, v)_{(V^*, V)}
        =\sum_{j=1}^J
        (f_j, v)_{(V_j^*, V_j)}.
        $$
      
       Let  $(W(t), t\in [0,T])$  be  a cylindrical
 Wiener process in
 a separable Hilbert space  $U$  defined on
 a 
  complete filtered probability space
     $(\Omega, \calf, (\calf_t)_{t\in [0,T]}, \p)$ 
     which
satisfies  the usual condition.
Then
$W$ takes values in 
another separable Hilbert space $U_0$
such that  
 the embedding $U \subseteq U_0$ is
Hilbert-Schmidt.
The space 
of
  all Hilbert-Schmidt operators from $U$ to $H$
  is denoted by
   $\call_2 (U, H)$  
with norm $\| \cdot \|_{\call_2(U,H)}$.

 Let $B:
 [0,T] \times V
\to \call_2(U,H)$ be  
 $(\calb ([0,T]) \times \calb(V) , \calb (
\call_2(U,H)))$-measurable, 
$A_j: [0,T] \times   V_j
\to V_j^*$ be  
$(\calb ([0,T]) \times  \calb (V_j) , \calb (V_j^*))$-measurable, 
  and define
  $A: [0,T]   \times V
\to V^*$  by 
$$
A(t,   v)
=\sum_{j=1}^J A_j (t,   v),
\quad \forall \ t\in [0,T],
\ v\in V.
$$

From now on, we assume
the following assumptions
{\bf (H1)}-{\bf (H5)} are fulfilled.

{\bf (H1)} (Hemicontinuity)
  For all  $t\in [0,T]$ 
   and
  $v_1, v_2, v_3 \in V$, the function
    $(A(t,  v_1 +\delta v_2),
   v_3)_{(V^*, V)} $ is  continuous
  in  $\delta \in \R$.

     {\bf (H2)} (Local monotonicity)  For all
$t\in [0,T]$ 
   and
  $u, v  \in V$,
  $$
  2(A(t,   u)
  - A(t,  v),
  \ u-v)_{(V^*, V)}
  + \| B(t,   u) -B(t,  v)^2_{\call_2(U,H)}
  $$
  \be
  \label{h2a}
  \le
  \left (
  g (t ) +  \varphi (u) + \psi (v)
  \right ) \| u-v\|^2_H,
\ee
  where $g \in L^1([0,T] )$, 
  and $\varphi, \psi: V \to \R$ are measurable  functions such that
  for all $u, v \in V$,
\be\label{h2b}
   |\varphi (u)|   
   \le
   \alpha_1
   + \alpha_1
  \sum_{j=1}^J
  \left (
    \|u \|_{V_j}^{q_j}
    +   \|u \|_{V_j}^{q_j} \| u\|_H^{\beta_{1,j}}
    +  \| u\|_H^{2+ \beta_{1,j}}
   \right )  ,
\ee
and 
 \be\label{h2c}
   |\psi (v)|   
   \le
   \alpha_1
   ( 1+ \| v \|_H^\alpha)
   +\alpha_1  
  \sum_{j=1}^J
  \left ( 1+    \| v\|_H^{\beta_{2,j}}
  \right )  
    \|v \|_{V_j}^{\theta_j}  ,
\ee
  where 
  $\alpha \ge 0$,
    $\alpha_1 \ge 0 $, 
   $q_j >1$, 
   $\theta_j \in [0, q_j)$, 
 $\beta_{1,j} \ge 0$ and  
  $\beta_{2,j} \ge 0$ 
  are    constants
  for   all   $j=1,2,\cdots, J.$

  {\bf (H3)} (Coercivity)
  For every $t\in [0,T]$ 
   and
  $ v \in V$,
  \be\label{h3a}
   (A(t , v),
   v)_{(V^*, V)}  
   \le
   -  
    \sum_{j=1}^J
    \gamma_{1,j}
     \| v \|^{q_j}_{V_j}
   +
   g  (t ) (1 +    \| v \|^2_H)  ,
   \ee
   where 
   $\gamma_{1,j}>0  $ are    constants
  for  all   $j=1,2,\cdots, J.$

   {\bf (H4)} (Growth of drift  terms)
  For every $j=1,2, \cdots, J$,  $t\in [0,T]$ 
   and
  $ v \in V$,
  \be\label{h4a}
   \| A_j (t,   v) \|^{\frac {q_j}{q_j -1}}_{V_j^*}
   \le   
    \alpha_2 \|v\|^{q_j}_{V_j} (1 +  \| v \|^{\beta_{1,j}}
     _H )
   +
    g  (t ) (1+   \| v \|^{2+ \beta_{1,j}} _H),
   \ee
   where 
   $\alpha_2 \ge 0$  is 
  a constant.
   
  {\bf (H5)} (Growth of diffusion term)
  For every    $t\in [0,T]$ 
   and
  $ v \in V$,
  \be\label{h5a}
   \| B(t,   v) \|^2_{\call_2(U,H)}
   \le        \sum_{j=1}^J
   \gamma_{2,j}
    \| v \|^{q_j }_{V_j}
   +
    g  (t) (1+     \| v \|^2 _H),
   \ee
   where
    $\gamma_{2,j} \ge 0  $ are    constants
     for  all   $j=1,2,\cdots, J,$
such that
  \be\label{h5b}
\max_{1\le j\le J}
\kappa_j < 2  \min  _{1\le j\le J}
  \gamma_{1,j} \gamma_{2,j}^{-1},
        \ee
        and 
          \be\label{h5ca}
   \kappa_j
   = 
   \max
   \{ 1+ \beta_{1,j}, \ 
   1+ \alpha, \ 
   1+ \beta_{2,j} 
   + 2\theta_j q_j^{-1} \}.
  \ee

 We also  assume  
for every $t\in [0,T]$,
$B (t,\cdot): V \to 
   \call_2(U,H)$   is continuous.
   In addition,   we assume that
if $u \in L^\infty(0,T; H)
\bigcap  (
\bigcap\limits_{1\le j\le J} L^{q_j}
(0,T; V_j)
  )$
  and 
  $\{u_n\}_{n=1}^\infty
 $ is a bounded sequence in
 $ L^\infty(0,T; H)
\bigcap  (
\bigcap\limits_{1\le j\le J} L^{q_j}
(0,T; V_j)
  )$
  such that
  $u_n  \to u$ in $L^1(0,T; H)$,
  then
  \be\label{h5c}
   \lim_{n\to \infty}
   v^*B(\cdot ,   u_n) =  v^*B(\cdot,  u)
   \ \text{ in }  \  L^2(0,T; \call_2 (U,\R)),
   \quad   \forall  \  v\in {V},
   \ee
   where 
   $v^*$ is the element in $H^*$
   identified with $v$ in $H$  by  
   Riesz's  representation theorem.

 We remark that the convergence
   \eqref{h5c} is weaker than the following
   convergence:
     \be\label{h5d}
   \lim_{n\to \infty}
 B (\cdot ,   u_n) =  B(\cdot,  u)
   \ \text{ in }  \  L^2(0,T; \call_2 (U,H)).
   \ee
     For convenience,  we now   set
\be\label{qj}
 {\tilde{q}} =\max_{1\le j\le J} q_j \  \ \text{and} \ \ 
{\underline{q}} =\min_ {1\le j\le J} q_j . \  \ 
 \ee

%

  By  \eqref{h5a},    we see that
$B(\cdot,v)$   has     superlinear growth 
in $v$ when  $ {q}_j>2$ for   some $j=1,2,\cdots, J$.
Moreover, the growth 
of $B(\cdot,v)$ in $v$  is critical in the sense that
the exponent $q_j$ in \eqref{h5a}
is exactly the same exponent in \eqref{h3a}.

Notice that 
 by  {\bf (H1)}, {\bf (H2)}
and {\bf (H4)},      the 
operator   $A(t,  \cdot): V\to V^*$
is 
 demicontinuous  
 for every $t\in [0,T]$
 in the sense that
if $v_n \to v$ in $V$, then 
$A(t,  v_n) \to A (t,  v)$
weakly in $V^*$
 (see,   \cite[ Lemma 2.1, p. 1252]{kry1},
    \cite[Remark 4.4.1]{liu1}).


Consider the stochastic equation:
\be\label{sde1}
dX(t)
= A(t, X(t)) dt
+B(t,  X(t)) d W(t), \quad t\in (0, T],
\ee
with initial condition:
\be\label{sde2}
X(0) =x \in H.
\ee

The solution of \eqref{sde1}-\eqref{sde2}
is understood in the following sense.

\begin{defn}\label{dsol}
A continuous  $H$-valued 
$\calf_t$-adapted stochastic process
$(X(t): t\in [0,T])$ is called a solution
of problem \eqref{sde1}-\eqref{sde2}
if
$$
X\in L^2(\Omega, L^2(0,T; H))
\bigcap L^{q_j} (\Omega, L^{q_j}(0,T; V_j)),
\quad \forall \  j=1,2, \cdots, J,
$$ 
and for all $t\in [0,T]$,
$$
X(t) = x
+ \int_0^t A(s, X(s)) ds
+\int_0^t
B(s, X(s)) dW(s) \quad \text{in} \ \ V^*,
$$
$\p$-almost surely.
 \end{defn}

The main result of this section  is stated below.
 
 \begin{thm}\label{main}
If 
  {\bf (H1)}-{\bf (H5)} are fulfilled and 
  the embedding $V \subseteq H$ is compact,
then for  every  
  $x\in H$,  \eqref{sde1}-\eqref{sde2}
has a unique solution 
$X$
 in the sense of
Definition \ref{dsol} .

In addition,  for every $p$ satisfying
\be\label{main p}
 1\le p <  {\frac 12} +
\min\limits_{1\le j \le J}
 (
 \gamma_{1,j} \gamma_{2,j}^{-1}
  ),
  \ee
  the following  unifrom estimates hold: 
\be\label{main 1}
\E \left (
\sup_{t\in [0,T]}
\|X(t)\|_H^{2p}
\right )
+
\E 
\left (
\left (
\sum_{j=1}^J \int_0^T
\| X(s)\|_{V_j}^{q_j}
ds
\right )^{p}
\right )
\le M(1 +\|x\|_H^{2p}),
\ee
where 
 $M=M(T, p)>0$ is a constant
depending only on $T$ and $p$.
 \end{thm}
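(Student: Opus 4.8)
The plan is to run the monotone (Galerkin) method, adapted to the two difficulties emphasized above: the tightness must be obtained in a \emph{non-metrizable} topological space, and the limit of the stochastic integrals must be taken without the (false) strong Skorokhod theorem. First I would take the orthogonal basis $\{e_k\}$ of $\mathscr{H}$ that is also orthonormal in $H$, set $H_n=\mathrm{span}\{e_1,\dots,e_n\}$ with orthogonal projection $P_n$, and solve the finite-dimensional It\^o equation $dX_n=P_nA(t,X_n)\,dt+P_nB(t,X_n)\,dW$. Global existence and uniqueness of $X_n$ follow from the demicontinuity of $A$ and the local monotonicity \textbf{(H2)}, together with the a priori bound that precludes explosion. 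Applying It\^o's formula to $\|X_n(t)\|_H^{2p}$, absorbing the drift by coercivity \textbf{(H3)}, estimating the diffusion by \textbf{(H5)} and the martingale part by the Burkholder--Davis--Gundy inequality, the critical balance \eqref{h5b}--\eqref{h5ca} ensures that the coercive terms $\gamma_{1,j}\|X_n\|_{V_j}^{q_j}$ dominate the $\gamma_{2,j}$ diffusion contribution. This yields \eqref{main 1} uniformly in $n$ for every $p$ in the range \eqref{main p}, together with bounds on $A_j(\cdot,X_n)$ in $L^{q_j/(q_j-1)}(0,T;V_j^*)$ via \textbf{(H4)} and on the stochastic convolution in $W^{\sigma,2}(0,T;\mathscr{H}^*)$.

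Next I would establish tightness of the laws of $\{X_n\}$ on the product topological space $(L_{w^*}^\infty(0,T;H)\cap L^r(0,T;H))\times C([0,T];U_0)$. Boundedness of the trajectories in $L^r(0,T;V)$ and in $W^{\sigma,2}(0,T;\mathscr{H}^*)$ with $\sigma\in(\tfrac12-\tfrac1r,\tfrac12)$, together with the compact embedding $V\subseteq H$, gives relative compactness in $L^r(0,T;H)$ by \cite[Corollary 5]{sim1}; this repairs the compactness gap of \cite{roc1}. Since the uniform $L^\infty(0,T;H)$ estimate also yields tightness in $L_{w^*}^\infty(0,T;H)$, which is not metrizable, I would apply the Skorokhod--Jakubowski representation theorem on this topological space rather than the classical metric version of \cite{roc1}. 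This produces a new space $(\widetilde{\Omega},\widetilde{\calf},\widetilde{\p})$ with variables $\widetilde{X}_n,\widetilde{X}$ and Wiener processes $\widetilde{W}_n,\widetilde{W}$ having matching joint laws and converging $\widetilde{\p}$-a.s., but with only $\widetilde{W}_n\to\widetilde{W}$, \emph{not} $\widetilde{W}_n=\widetilde{W}$.

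To pass to the limit I would extract, using the \textbf{(H4)} bounds, weak limits $A_j(\cdot,\widetilde{X}_n)\rightharpoonup\bar A_j$ in $L^{q_j/(q_j-1)}(\widetilde{\Omega}\times(0,T);V_j^*)$. Because $\widetilde{W}_n\neq\widetilde{W}$, the stochastic integral cannot be handled by substitution; instead, for each fixed $v\in V$ the continuity hypothesis \eqref{h5c} gives $v^*B(\cdot,\widetilde{X}_n)\to v^*B(\cdot,\widetilde{X})$ in $L^2([0,T]\times\widetilde{\Omega};\call_2(U,\R))$, and a weak-convergence argument then yields convergence of $\int_0^\cdot v^*B(\cdot,\widetilde{X}_n)\,d\widetilde{W}_n$ to $\int_0^\cdot v^*B(\cdot,\widetilde{X})\,d\widetilde{W}$ in $L^2(0,T;\R)$ in probability. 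Testing against the basis $\{e_k\}$ identifies the limiting equation $\widetilde{X}(t)=x+\int_0^t\bar A(s)\,ds+\int_0^t B(s,\widetilde{X}(s))\,d\widetilde{W}(s)$ in $V^*$.

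It remains to identify $\bar A=A(\cdot,\widetilde{X})$ by the Minty--Browder monotone argument, and this is where both $\varphi$ and $\psi$ being nonzero makes the proof genuinely harder than the classical case \eqref{lap6}. Applying It\^o to $\|\widetilde{X}(t)\|_H^2$, comparing with the $\liminf$ of $\|\widetilde{X}_n\|_H^2$ by weak lower semicontinuity, and using \textbf{(H2)}, I must pass to the limit in the $\varphi,\psi$-weighted terms; this I would do by Vitali's theorem, whose uniform-integrability hypothesis is supplied precisely by the pathwise $L^\infty(0,T;H)$ bound inherited from the $L_{w^*}^\infty$ tightness, after which demicontinuity gives $\bar A_j=A_j(\cdot,\widetilde{X})$. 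Uniqueness follows from \textbf{(H2)} applied to two solutions via It\^o on $\|X-Y\|_H^2$ and a Gronwall estimate with stopping times controlling the locally bounded $\varphi,\psi$; pathwise uniqueness together with the martingale solution just built then yields, by the Gy\"ongy--Krylov characterization, a strong solution on the original probability space, and \eqref{main 1} for the solution follows from the uniform bounds by lower semicontinuity. The main obstacle I anticipate is the interplay of the last two paragraphs: taking the stochastic-integral limit through the weak-convergence device forced by $\widetilde{W}_n\neq\widetilde{W}$, and then closing the monotonicity identification with both $\varphi$ and $\psi$ present, where securing the uniform integrability needed for Vitali's theorem is the true crux.
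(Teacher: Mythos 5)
Your proposal follows essentially the same route as the paper's proof: Galerkin approximation on the special $\mathscr{H}$-basis, uniform estimates via It\^o and BDG under the critical balance \eqref{h5b}, tightness in a non-metrizable space containing $L_{w^{*}}^{\infty}(0,T;H)$ resolved by the Skorokhod--Jakubowski theorem, the weak-convergence device under \eqref{h5c} for passing to the limit in the stochastic integrals despite $\widetilde{W}_n\neq\widetilde{W}$, and the Minty-type monotonicity identification closed by Vitali's theorem using the pathwise $L^\infty(0,T;H)$ bound. The only cosmetic deviation is at the very end, where you invoke the Gy\"ongy--Krylov characterization to return to the original probability space, while the paper combines pathwise uniqueness with the Yamada--Watanabe theorem; these are interchangeable devices here and do not affect correctness.
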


\subsection{Approximate solutions and uniform
estimates}

In this section,  we consider a sequence
of approximate solutions
 to \eqref{sde1}-\eqref{sde2}
by the Galerkin method, and then derive
the uniform estimates of these solutions.

 Since $V$ is
 a separable Banach space
 and  densely embedded in  $H$,
 by \cite[Lemma C.1]{brz1}, we find that
 there exists 
 a
 separable Hilbert space
 $\mathscr{H} \subseteq V$ such that
 
 (i).   $\mathscr{H}$ is dense in $V$ and
 the embedding  $\mathcal{H} \subseteq V$
 is compact.
 
(ii).  There exists an orthonormal basis
 $\{ h_k \}_{k=1}^\infty$ of
 $H$ such that $h_k \in \mathscr{H}$
 for all $k\in \N$.
 
 (iii). The sequence 
 $\{ h_k \}_{k=1}^\infty$ is also an orthogonal basis
 of $\mathscr{H}$.

 By (i) we see that there exists a constant
 $C_{\mathscr{H}}>0$ such that
 \be\label{basis0}
 \| v \|_V
 \le C_{\mathscr{H}}
 \| v\|_{\mathscr{H}},
 \quad \forall \ v\in {\mathscr{H}}.
 \ee
 On the other hand,   
for every 
    $v\in H$,  by    (ii)  we have
\be\label{basis1}
 v=\sum_{k=1}^\infty
 (v, h_k)_H h_k \quad \text{in } \ H,
\ee
 and  for every 
    $v\in \mathscr{H}$,   by (iii) we have
\be\label{basis2}
 v=\sum_{k=1}^\infty
 (v, h_k)_H h_k \quad \text{in } \ \mathscr{H}.
\ee
Note 
that the series in \eqref{basis1} converges
in $H$,  and 
  the series in \eqref{basis2} converges
in $\mathscr{H}$.
By \eqref{basis2},
  for every $n\in \N$ and 
    $v\in \mathscr{H}$,   we have
\be\label{basis3}
 \| v\|_{\mathscr{H}}^2 
 = \|P_n v\|_{\mathscr{H}}^2 
 +\| (I-P_n) v\|_{\mathscr{H}}^2 ,
\ee
where $P_n: H \to  H_n=\text{span} \{h_1, \cdots,
  h_n\}$ is  the  orthogonal  projection.
 As  a consequence of \eqref{basis3}  we get,
   for every $n\in \N$ and 
    $v\in \mathscr{H}$, 
\be\label{basis4}
   \|P_n v\|_{\mathscr{H}} \le    \|  v\|_{\mathscr{H}}.
   \ee
   
   We now extend $P_n$ from $H$
   to $V^*$ and $\mathscr{H}^*$  by
  $$
  P_n v^*=\sum_{k=1}^n (v^*, h_k)_{(V^*, V)} h_k,
  \quad \forall \ v^*\in V^*,
  $$
  and
   $$
  P_n u^*=\sum_{k=1}^n (u^*, h_k)_{(\mathscr{H}^*, 
  \mathscr{H})} h_k,
  \quad \forall \ u^*\in \mathscr{H} ^*.
  $$

Let $\{u^0_k\}_{k=1}^\infty$ be  
an orthonormal
  basis of $U_0$, and $Q_n: U_0 \to
   \  \text{span} \{u_1^0, \cdots,
  u^0_n\}$ be the  orthogonal  projection.
  Consider the $n$-dimensional stochastic differential equation
for $Z_n \in  H_n$:
\be\label{ode1}
Z_n (t)
=P_n  x
+ \int_0^t P_n A(s, Z_n (s)) ds
+ \int_0^t P_n B(s,Z_n (s))Q_n  dW(s).
\ee

Note that 
 $A(t, \cdot) :  V\to
V^*$ is  demicontinuous
  for every $t\in [0,T]$,  and hence
$P_n A(t,  \cdot): H_n
\to H_n$ is continuous. 
Then by \eqref{h2a}-\eqref{h5a}
we infer from  \cite{kry1}   that
   \eqref{ode1} 
has a  unique   solution $Z_n$ on $[0,T]$.

The next lemma is
concerned with the uniform estimates
of  $\{Z_n\}_{n=1}^\infty$.

  \begin{lem}\label{ues1}
If  {\bf (H1)}-{\bf (H5)} are fulfilled,
then 
for every  $x\in H$ and    every $p $  
satisfying  \eqref{main p}, 
  there exists a constant $M_1= M_1(T, p)>0$ 
  such that for all $n\in \N$, 
  the solution $Z_n$ of \eqref{ode1} satisfies,
  $$
  \E \left (
  \sup_{t\in [0,T]}
  \|Z_n (t)\|^{2p}_H
  \right  )
  +
   \E \left ( \left (
  \int_0^T \sum_{j=1}^J  \| Z_n (t) \|_{V_j}^{q_j} dt
  \right )^{p} \right )
  + \E \left (  \int_0^{T }
  \|Z_n (s)\|_H^{2p-2}  \sum_{j=1}^J
  \|Z_n (s)\|_{V_j}^{q_j} ds
  \right )
  $$
  $$
  \le M_1  (1 + \|x\|^{2p}_H).
   $$
  \end{lem}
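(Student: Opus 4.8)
The plan is to run the standard monotone-method a priori estimate: apply It\^o's formula to the finite-dimensional process $\|Z_n(t)\|_H^{2p}$, insert the coercivity bound {\bf (H3)} and the diffusion growth bound {\bf (H5)}, and observe that the condition \eqref{main p} on $p$ is exactly what makes the coefficient of the dissipative term negative, after which a Gronwall argument closes everything. Since $Z_n(t)\in H_n$ is a genuine finite-dimensional It\^o process, It\^o's formula is classical; to keep the stochastic integrals martingales I would first localize by the stopping times $\tau_N=\inf\{t\in[0,T]:\|Z_n(t)\|_H>N\}$ and let $N\to\infty$ at the end by Fatou/monotone convergence.

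First I would apply It\^o to $\|Z_n\|_H^2$, using $P_nZ_n=Z_n$ and $(P_nA,Z_n)_H=(A,Z_n)_{(V^*,V)}$, to obtain
\[
d\|Z_n\|_H^2=\big(2(A(s,Z_n),Z_n)_{(V^*,V)}+\|P_nB(s,Z_n)Q_n\|_{\call_2(U,H)}^2\big)\,ds+2(Z_n,P_nB(s,Z_n)Q_n\,dW)_H,
\]
and then It\^o to $y\mapsto y^p$ with $y=\|Z_n\|_H^2$. The It\^o correction contributes $2p(p-1)\|Z_n\|_H^{2(p-1)}\|B\|^2_{\call_2}$, so the coefficient multiplying $\|Z_n\|_H^{2(p-1)}\sum_j\|Z_n\|_{V_j}^{q_j}$ becomes $p\big(-2\gamma_{1,j}+(2p-1)\gamma_{2,j}\big)$ after inserting {\bf (H3)} and {\bf (H5)}. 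Since $\|P_n\|\le1$ and $\|Q_n\|\le1$, all bounds are uniform in $n$. This coefficient is strictly negative for every $j$ precisely when $p<\frac12+\gamma_{1,j}\gamma_{2,j}^{-1}$, which is \eqref{main p}; I would keep the resulting term $-\epsilon_0\int_0^t\|Z_n\|_H^{2p-2}\sum_j\|Z_n\|_{V_j}^{q_j}\,ds$ on the left, while the remaining contributions are controlled by $g(s)(1+\|Z_n\|_H^{2p})$ and lower-order powers of $\|Z_n\|_H$ handled by Young's inequality.

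Then I would take the supremum over $t\le\tau_N$ and expectation, treating the martingale term by the Burkholder--Davis--Gundy inequality. The hard part will be the BDG estimate: the quadratic variation of the stochastic integral is bounded by $C\int_0^T\|Z_n\|_H^{2(2p-1)}\|B\|^2_{\call_2}\,ds$, whose square root contains the critical quantity $\sum_j\|Z_n\|_{V_j}^{q_j}$ with the very same exponent as the dissipation. The key trick is to split $\|Z_n\|_H^{2(2p-1)}=\|Z_n\|_H^{2p}\cdot\|Z_n\|_H^{2(p-1)}$, pull out $(\sup_{r\le t}\|Z_n\|_H^{2p})^{1/2}$, and apply Young's inequality so that a small multiple $\epsilon\,\E\sup_{r\le t}\|Z_n\|_H^{2p}$ is absorbed into the left-hand side while the residual $\int_0^t\|Z_n\|_H^{2(p-1)}\sum_j\|Z_n\|_{V_j}^{q_j}\,ds$ is absorbed into the dissipative term $-\epsilon_0\int(\cdots)$; this is exactly where having a strictly positive dissipation budget matters. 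What is left is an inequality of the form $\Phi_n(t)\le C(1+\|x\|_H^{2p})+C\int_0^t(1+g(s))\Phi_n(s)\,ds$ for $\Phi_n(t)=\E\sup_{r\le t\wedge\tau_N}\|Z_n\|_H^{2p}$, and since $g\in L^1(0,T)$, Gronwall's inequality yields the bound on the first and third terms of the claim, uniformly in $n$ and $N$.

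Finally, to obtain the middle term $\E\big(\int_0^T\sum_j\|Z_n\|_{V_j}^{q_j}\,dt\big)^p$ I would return to the $p$-th power of the energy balance obtained by integrating $d\|Z_n\|_H^2$ once, isolating $\sum_j\gamma_{1,j}'\int_0^T\|Z_n\|_{V_j}^{q_j}\,ds$, raising to the power $p$, and taking expectation; the polynomial-in-$\|Z_n\|_H$ terms are controlled by the already-established bound on $\E\sup_t\|Z_n\|_H^{2p}$, and the martingale term is again handled by BDG together with Young's inequality. Letting $N\to\infty$ via Fatou gives all three estimates with a constant $M_1=M_1(T,p)$ depending only on $T$, $p$ and the structural constants $\gamma_{1,j},\gamma_{2,j},\|g\|_{L^1}$, but not on $n$.
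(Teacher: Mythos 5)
Your overall strategy (It\^o's formula applied to $\|Z_n\|_H^{2p}$, insertion of {\bf (H3)} and {\bf (H5)}, the observation that \eqref{main p} is exactly the sign condition, localization by stopping times, Gronwall, BDG for the supremum, and a separate $p$-th power argument for the middle term) is the same as the paper's, and most steps are sound. However, there is one genuine gap: in your sup-estimate you propose, after BDG and Young, to absorb the residual term $C\,\E\int_0^t\|Z_n\|_H^{2p-2}\sum_j\|Z_n\|_{V_j}^{q_j}\,ds$ into the dissipative term $p\gamma\,\E\int_0^t\|Z_n\|_H^{2p-2}\sum_j\|Z_n\|_{V_j}^{q_j}\,ds$ kept on the left, where $\gamma=\min_j\{2\gamma_{1,j}-(2p-1)\gamma_{2,j}\}>0$. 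This absorption requires $C<p\gamma$. But $C$ has the form $\frac{c_{BDG}^2}{4\epsilon}\max_j\gamma_{2,j}$, where $\epsilon$ must already be chosen small (roughly $\epsilon<\tfrac{1}{8p\,c_{BDG}}$) so that $\epsilon\,\E\sup_t\|Z_n\|_H^{2p}$ can be absorbed into the left-hand side; hence $C\gtrsim p\,c_{BDG}^2\max_j\gamma_{2,j}$, and $C<p\gamma$ would force $\gamma_{1,j}\gamma_{2,j}^{-1}\gtrsim c_{BDG}^2\,p$, a far stronger hypothesis than \eqref{main p}, which only guarantees $\gamma_{1,j}\gamma_{2,j}^{-1}>p-\tfrac12$. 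A strictly positive dissipation budget is not enough here; you would need a quantitative domination of the BDG constant that the assumptions do not provide.

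The fix is to run a two-pass argument, which is exactly what the paper does. First take plain expectations (no supremum) of the localized It\^o inequality, so the stochastic integral vanishes; Gronwall then yields both $\sup_{t\in[0,T]}\E\left(\|Z_n(t)\|_H^{2p}\right)\le C(1+\|x\|_H^{2p})$ and, crucially, the weighted dissipation bound $\E\left(\int_0^T\|Z_n(s)\|_H^{2p-2}\sum_j\|Z_n(s)\|^{q_j}_{V_j}\,ds\right)\le C(1+\|x\|_H^{2p})$, as in \eqref{ues1 p7}. Only afterwards does one take the supremum and apply BDG: the residual $\E\int\|Z_n\|_H^{2p-2}\|B(\cdot,Z_n)\|^2_{\call_2(U,H)}\,ds$ is then not absorbed but simply bounded, via {\bf (H5)}, by the constant already obtained in the first pass, cf. \eqref{ues1 p10}. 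With this reordering the rest of your proposal goes through, including your treatment of the middle term, where using Young's inequality with a small parameter is legitimate precisely because the compensating factor $\E\sup_t\|Z_n\|_H^{2p}$ has by then already been bounded.
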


  \begin{proof}
   By \eqref{ode1}  
    and It\^{o}'s formula we get,
  $\p$-almost surely, for all $t\in [0,T]$,
  $$
  \|Z_n (t)\|_H^{2p}
  -\|P_n x\|_H^{2p}
  =
 2 p\int_0^t \|Z_n (s)\|_H^{2p-2}
 (A(s, Z_n (s)), Z_n(s))_{(V^*, V)} ds
   $$
   $$
   +  p\int_0^t \|Z_n (s)\|_H^{2p-2}
  \|P_n B(s, Z_n (s))Q_n\|^2_{\call_2(U,H)^2} ds
  $$
  $$
   +  2p\int_0^t \|Z_n (s)\|_H^{2p-2}Z_n^*(s) P_n B(s, Z_n(s)) Q_n dW(s)
   $$
\be\label{ues1 p2}
   + 2p(p-1) \int_0^t \|Z_n (s)\|_H^{2p-4}
   \|Z_n^*(s) P_n B(s, Z_n(s)) Q_n\|^2_{\call_2(U,\R)} ds,
\ee
 where $Z_n^*(s)$ is the element in $H^*$ identified
  with $Z_n(s)$
  by Riesz's representation theorem.
By \eqref{h3a} and  \eqref{h5a}  
we obtain from
\eqref{ues1 p2} that, 
$\p$-almost surely, for all $t\in [0,T]$,
  $$
  \|Z_n (t)\|_H^{2p}
  + 
    \int_0^t
  \|Z_n (s)\|_H^{2p-2}  \sum_{j=1}^J
  \left ( 2p \gamma_{1,j}
  -p(2p-1)\gamma_{2,j} \right )
  \|Z_n (s)\|_{V_j}^{q_j} ds
  $$
  $$
 \le \|P_n x \|_H^{2p}
  + p (1+2p) \int_0^t \left (  g (s) 
   \|Z_n (s)\|_H^{2p-2}+ g(s)  \|Z_n (s)\|_H^{2p}
  \right )ds
  $$
$$
   +  2p\int_0^t \|Z_n (s)\|_H^{2p-2}Z_n^*(s) P_n B(s, Z_n(s)) Q_n dW(s)
   $$
    $$
 \le \|P_n x \|_H^{2p}
 + (1+2p) \int_0^t |g(s)| ds
 +
    (2p-1) (1+2p) \int_0^t 
  | g(s)|   \|Z_n (s)\|_H^{2p}
  ds
  $$
   \be\label{ues1 p3}
   +  2p\int_0^t \|Z_n (s)\|_H^{2p-2}Z_n^*(s) P_n B(s, Z_n(s)) Q_n dW(s),
  \ee
  By \eqref{main p} we have
  \be\label{ues1 p3a}
 \gamma  = \min_{1\le j \le J}
   \{ 2 \gamma_{1,j}
  - (2p-1) \gamma_{2,j}\}>0.
  \ee
      
  Given $n\in \N$ and $R>0$, denote by
\be\label{stop1}
\tau_{n,R}
=\inf  \left \{
t\ge 0: \|Z_n(t)\|_H 
+
\int_0^t \sum_{j=1}^J \| Z_n(s)\|^{q_j}_{V_j} ds
 \ge R
\right \} \wedge T,
\ee
 with $ \inf \emptyset = +\infty$.
By \eqref{ues1 p3}-\eqref{ues1 p3a}  
we obtain, 
$\p$-almost surely, for all $t\in [0,T]$,
  $$
  \|Z_n ( t\wedge \tau_{n,R} )\|_H^{2p}
  +  p\gamma 
    \int_0^{t\wedge \tau_{n,R}}
  \|Z_n (s)\|_H^{2p-2}  \sum_{j=1}^J
   \|Z_n (s)\|_{V_j}^{q_j} ds
  $$
     $$
 \le \|P_n x \|_H^{2p}
 + (1+2p) \int_0^{t\wedge \tau_{n,R}} |g(s)| ds
 +
    (2p-1) (1+2p) \int_0^{t\wedge \tau_{n,R}} 
  | g(s)|   \|Z_n (s)\|_H^{2p}
  ds
  $$
   \be\label{ues1 p3b}
   +  2p\int_0^{t\wedge \tau_{n,R}} \|Z_n (s)\|_H^{2p-2}Z_n^*(s) P_n B(s, Z_n(s)) Q_n dW(s),
  \ee
It follows from  
 \eqref{ues1 p3b} that
  for all $t\in [0,T]$,
    $$
 \E \left ( \|Z_n (t\wedge \tau_{n,R})\|_H^{2p}
 \right )
  +   p \gamma 
  \E \left (  \int_0^{t\wedge \tau_{n,R}}
  \|Z_n (s)\|_H^{2p-2}  \sum_{j=1}^J
  \|Z_n (s)\|_{V_j}^{q_j} ds
  \right )
  $$
  \be\label{ues1 p5}
 \le  \|x \|_H^{2p}  
  + (1+2p)  \int_0^{T }
    |g (s)| ds
    + (2p-1) (1+2p)  \int_0^{t }
   |g (s)|  
    \E \left (
   \|Z_n (s\wedge \tau_{n,R} )\|_H^{2p} 
   \right ) 
  ds .
  \ee
  Apply 
  Gronwall's inequality to  \eqref{ues1 p5} 
  to obtain,
 for all $t\in [0,T]$,
  $$
 \E \left ( \|Z_n (t\wedge \tau_{n,R})\|_H^{2p}
 \right )
  +  p \gamma 
  \E \left (  \int_0^{t\wedge \tau_{n,R}}
  \|Z_n (s)\|_H^{2p-2}  \sum_{j=1}^J
  \|Z_n (s)\|_{V_j}^{q_j} ds
  \right )
  $$
\be\label{ues1 p6}
 \le
 \left (   \| x \|_H^{2p}  
 +  (1+2p)  
  \int_0^ {T }    |g   (s )  |  ds
   \right )e^{    (2p-1) (1+2p)   \int_0^{t } 
   |g (s)| 
    ds }.
  \ee
  Taking the limit of \eqref{ues1 p6}
  as $R\to \infty$,   by Fatou's lemma, we  get
 for all $t\in [0,T]$,
  $$
 \E \left ( \|Z_n (t )\|_H^{2p}
 \right )
  +    p \gamma 
  \E \left (  \int_0^{t }
  \|Z_n (s)\|_H^{2p-2}  \sum_{j=1}^J
  \|Z_n (s)\|_{V_j}^{q_j} ds
  \right )
  $$
     \be\label{ues1 p7}
      \le
 \left (   \| x \|_H^{2p}  
 +  (1+2p)  
  \int_0^ {T }    |g   (s )  |  ds
   \right )e^{    (2p-1) (1+2p)   \int_0^{T } 
   |g (s)| 
    ds }.
  \ee

 On the other hand,
   by 
 \eqref{ues1 p3b}
 we have  
 $$
  \E \left (
  \sup_{t\in [0,T]}\|Z_n (t\wedge \tau_{n,R})\|_H^{2p}
  \right )
 \le 
  \|x\|_H^{2p} 
  +   (1+2p)  \int_0^ {T } 
   |g  (s) | ds
   $$
   $$
  +  (2p-1) (1+2p)   \int_0^{T } 
 |g (s)|  \E \left (
   \|Z_n (s  )\|_H^{2p} \right )
  ds
   $$
 \be\label{ues1 p8}
   +  2p\E \left (
    \sup_{t\in [0,T]}
   \left |
   \int_0^ {t\wedge \tau_{n,R}}  \|Z_n (s)\|_H^{2p-2}Z_n^*(s) P_n B(s, Z_n(s)) Q_n dW(s)
   \right |\right ).
  \ee
  By \eqref{ues1 p7} we  obtain
   $$
    (2p-1) (1+2p)   \int_0^{T } 
 |g (s)|  \E \left (
   \|Z_n (s  )\|_H^{2p} \right )
  ds  
  $$
  \be\label{ues1, p9}
  \le  (2p-1) (1+2p)    \int_0^{T }  
   |g (s)|  ds \sup_{s\in [0,T]} \E \left (
   \|Z_n (s  )\|_H^{2p} \right )
   \le c_1 (1+ \|x\|_H^{2p} ),
\ee
where $c_1=c_1(g, p, T)>0$  is a constant.
By the BDG inequality, \eqref{h5a}
and  \eqref{ues1 p7},  we 
have 
 $$
 2p\E \left (
    \sup_{t\in [0,T]}
   \left |
   \int_0^ {t\wedge \tau_{n,R}}  \|Z_n (s)\|_H^{2p-2}Z_n^*(s) P_n B(s, Z_n(s)) Q_n dW(s)
   \right |\right )
   $$
     $$
\le c_2 \E \left ( \left (
   \int_0^ {T\wedge \tau_{n,R}}
     \|Z_n (s)\|_H^{4p-2}
     \|     B(s, Z_n(s))   \|^2_{\call_2(U, H)} ds
     \right )^{\frac 12} \right )
   $$
     $$
\le{\frac 12}  \E 
 \left ( \sup_{t\in [0,T]} \|Z_n (t\wedge \tau_{n,R} )\|_H^{2p}
   \right ) 
   +
   {\frac 12} c_2^2
   \E
\left (
   \int_0^ {T\wedge \tau_{n,R}}
     \|Z_n (s)\|_H^{2p-2}
     \|     B(s, Z_n(s))   \|^2_{\call_2(U, H)} ds
     \right )
   $$
    $$
\le{\frac 12}  \E 
 \left ( \sup_{t\in [0,T]} \|Z_n (t\wedge \tau_{n,R} )\|_H^{2p}
   \right ) 
   +
   {\frac 12} c_2^2
   \E
\left (
   \int_0^ {T }
    \left ( g(s) \|Z_n (s)\|_H^{2p-2} 
    + g(s)
     \|Z_n (s)\|_H^{2p}  \right )
     ds
     \right )
   $$
    $$
    +
   {\frac 12} c_2^2  
   \E
\left (
   \int_0^ {T }
     \|Z_n (s)\|_H^{2p-2}
     \sum_{j=1}^J \gamma_{2,j}
     \|  Z_n(s)    \|^{{q}_j}_{V_j}
       ds
     \right )
   $$
     $$
\le{\frac 12}  \E 
 \left ( \sup_{t\in [0,T]} \|Z_n (t\wedge \tau_{n,R} )\|_H^{2p}
   \right ) 
   + 
c_3 
   \int_0^ {T }
    | g(s)|
    ds
    \left (1+ 
     \sup_{s\in [0,T]} \E \left (
     \|Z_n (s)\|_H^{2p}  \right )\right )
   $$
    $$
    +
   {\frac 12} c_2^2
   \max_{1\le j \le J}
   \gamma_{2,j}
   \E
\left (
   \int_0^ {T}
     \|Z_n (s)\|_H^{2p-2}  
     \sum_{j=1}^J
     \|  Z_n(s)    \|^{q_j}_{V_j}
       ds
     \right )
   $$
 \be\label{ues1 p10}
\le{\frac 12}  \E 
 \left ( \sup_{t\in [0,T]} \|Z_n (t\wedge \tau_{n,R} )\|_H^{2p}
   \right ) 
   + 
  c_4 \left (1 + \| x \|^{2p}_H  
\right )   ,
\ee
 where    $c_4=c_4(g, T, p)>0$ is a constant.
   By  \eqref{ues1 p8}-\eqref{ues1 p10}
  we obtain 
   $$   \E 
 \left ( \sup_{t\in [0,T]} \|Z_n (t\wedge \tau_{n,R} )\|_H^{2p}
   \right ) 
   \le c_5  \left (1 + \| x \|^{2p}_H 
\right ),
$$
 where $c_5=c_5(g,T, p)>0$ is a constant, which implies 
 that 
\be\label{ues1 p11}  \E 
 \left ( \sup_{t\in [0,T]} \|Z_n (t  )\|_H^{2p}
   \right ) 
   \le c_5  \left (1 + \| x \|^{2p}_H 
\right ).
\ee

As a special case of  \eqref{ues1 p3b}
with $p=1$, we have,  
 $\p$-almost surely, 
  $$
\gamma  \int_0^{T\wedge \tau_{n,R}}
  \sum_{j=1}^J
  \|Z_n (s)\|_{V_j}^{q_j} ds
  \le \| x\|_H^{2}  
  + (1+ 2p)  \int_0^{T }  
   |g (s)| ds
   $$
   $$
   + (2p-1)
   (1+ 2p)  \int_0^{T }  
   |g (s)| 
   \|Z_n (s)\|_H^{2} 
  ds  
   +  2\int_0^ {T\wedge \tau_{n,R}}  
   Z_n^*(s) P_n B(s, Z_n(s)) Q_n dW(s),
    $$
    and hence
   $$
  \gamma^p
  \E \left (
  \left (
    \int_0^{T\wedge \tau_{n,R}}
  \sum_{j=1}^J
  \|Z_n (s)\|_{V_j}^{q_j} ds  \right )^p \right )
  $$
  $$
  \le c_6 ( 1+
   \| x \|_H^{2p}   )
   +c_6 
   \E \left (
   \left (  \int_0^{T }  
 |g (s)| 
   \|Z_n (s)\|_H^{2}  
  ds 
  \right )^p \right )
  $$
  $$
   +   4^{p-1} 2^p
   \E \left (
   \left |
   \int_0^ {T\wedge \tau_{n,R}}  
   Z_n^*(s) P_n B(s, Z_n(s)) Q_n dW(s)
   \right |^p
   \right ),
    $$
    $$
  \le 
   c_6 ( 1+
   \| x \|_H^{2p}   )
   +c_6 
    \left (  \int_0^{T}  
 |g (s)| ds  \right )^p
      \E \left (\sup_{s\in [0,T]}
   \|Z_n (s)\|_H^{2p}  
   \right )
  $$
 \be\label{ues1 p12}
   +  4^{p-1}  2^{p}
   \E \left (
   \left |
   \int_0^ {T\wedge \tau_{n,R}}  
   Z_n^*(s) P_n B(s, Z_n(s)) Q_n dW(s)
   \right |^p
   \right ),
  \ee 
  where $c_6=c_6
  (g, T, p)>0$ is a constant.
 By  the BDG inequality, \eqref{h5a}  and \eqref{ues1 p11}  we 
  obtain 
$$
     4^{p-1}  2^{p}
   \E \left (
   \left |
   \int_0^ {T\wedge \tau_{n,R}}  
   Z_n^*(s) P_n B(s, Z_n(s)) Q_n dW(s)
   \right |^p
   \right )
$$
$$
  \le c_7  
   \E \left (
   \left  (
   \int_0^ {T\wedge \tau_{n,R}}  
   \| Z_n(s)\|^2 \|  B(s, Z_n(s))  \|^2_{\call_2(U, \R)}
   ds
   \right )
   ^{\frac p2}
   \right )
   $$
   $$
  \le c_7  
   \E \left (
   \left  (
   \int_0^   {T\wedge \tau_{n,R}}     (
      \| Z_n(s)  \|^2_H
   \sum_{j=1}^J \gamma_{2,j} 
    \| Z_n(s)  \|_{V_j}^{{q}_j}
   +  
   g(s) (
   \| Z_n(s)  \|^2_H
  +
   \| Z_n(s)  \|^4_H
    )   )
   ds
   \right )
   ^{\frac p2}
   \right )
   $$
    $$
  \le  2^{\frac p2} c_7
\max_{1\le j \le  J} \gamma_{2,j}
 ^{\frac p2}
   \E \left (
   \left  (
   \int_0^  {T\wedge \tau_{n,R}}    
     \| Z_n(s)  \|^2_H
   \sum_{j=1}^J
    \| Z_n(s)  \|_{V_j}^{{q}_j} ds
     \right )
   ^{\frac p2}
   \right )
   $$
   $$
   +     2^{\frac p2}
  c_7  
   \E \left (
   \left  (
   \int_0^ {T } 
  | g(s)|  (
   \| Z_n(s)  \|^2_H
  +
   \| Z_n(s)  \|^4_H
    )   
   ds
   \right )
   ^{\frac p2}
   \right )
   $$
    $$
  \le 
  2^{\frac p2} c_7
\max_{1\le j \le  J} \gamma_{2,j}
 ^{\frac p2} 
   \E \left (
   \sup_{s\in [0,T]}
   \| Z_n(s)  \|^p _H
   \left  (
   \int_0^{T\wedge \tau_{n,R}}  
   \sum_{j=1}^J
    \| Z_n(s)  \|_{V_j}^{{q}_j} ds
     \right )
   ^{\frac p2}
   \right )
   $$
   $$
   +     2^{\frac p2}
  c_7   \left (
   \int_0^ {T } 
   |g(s)|  ds \right )^{\frac p2}
   \E \left (
   \left  ({\frac 12} +{\frac 32}
   \sup_{s\in [0,T]}
   \| Z_n(s)  \|^4_H
   \right )
   ^{\frac p2}
   \right )
   $$
     $$
  \le 
 {\frac 12} \gamma^p 
    \E \left ( 
   \left  (
   \int_0^   {T\wedge \tau_{n,R}}    
   \sum_{j=1}^J
    \| Z_n(s)  \|_{V_j}^{{q}_j} ds
     \right )
   ^p
   \right )
  + c_8 + 
  c_8
   \E \left (
   \sup_{s\in [0,T]}
   \| Z_n(s)  \|^{2p} _H
   \right )
   $$ 
   \be\label{ues1 p13}
    \le 
{\frac 12} \gamma^p 
    \E \left ( 
   \left  (
   \int_0^  {T\wedge \tau_{n,R}}    
   \sum_{j=1}^J
    \| Z_n(s)  \|_{V_j}^{q_j} ds
     \right )
   ^p
   \right )
 + c_9  \left (1 +  \| x \|^{2p}_H  
\right ),
\ee
    where $c_9=c_9(g, T, p)>0$ is a constant.
   By  \eqref{ues1 p11}-\eqref{ues1 p13}
 we obtain 
  \be\label{ues1 p14} 
   \E \left (
  \left (
    \int_0^{T\wedge \tau_{n,R}}
  \sum_{j=1}^J
  \|Z_n (s)\|_{V_j}^{q_j} ds  \right )^p \right )
    \le 
   c_{10}  \left (1 +  \| x\|^{2p}_H 
\right ),
\ee
   where $c_{10}=c_{10}(g, T, p)>0$ is a constant.
  Take the limit of   \eqref{ues1 p14}
   as $R\to \infty$ to obtain 
$$
  \E \left (
  \left (
    \int_0^{T }
  \sum_{j=1}^J
  \|Z_n (s)\|_{V_j}^{q_j} ds  \right )^p \right )
  \le 
   c_{10}  \left (1 + \| x \|^{2p}_H 
\right ),
$$
 which along with
  \eqref{ues1 p7} and
  \eqref{ues1 p11}
 concludes the proof.
    \end{proof}

   \begin{lem}\label{ues2}
   If  {\bf (H1)}-{\bf (H5)} are fulfilled, 
then 
for any     $x\in H$, 
  there exists a constant $M_2= M_2(T)>0$ 
  such that
  the solution $Z_n$ of \eqref{ode1} satisfies,
  for all $n\in \N$  and
   $j=1,\cdots, J$,
    $$ 
     \E \left (
     \int_0^T
   \| A_j (s,   Z_n(s) ) \|^{\frac {q_j }{q_j  -1}}_{V_j^*}
   ds \right )
    \le   
 M_2 (1+  \| x \|^{2+ \beta_{1,j}  }_H ),
  $$  
  and
  $$
   \E \left (
  \int_0^T
  \| B(s, Z_n (s))\|^{2}_{\call_2(U,H)} ds
  \right )
    \le   
 M_2 (1+  \| x \|^{2  }_H ).
  $$  
   \end{lem}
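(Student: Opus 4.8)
The plan is to integrate the pointwise growth bounds \textbf{(H4)} and \textbf{(H5)} over $[0,T]$, take expectations, and then control each resulting term by the uniform moment estimates already supplied by Lemma~\ref{ues1}, choosing the exponent $p$ there suitably for each term. Throughout I set $\|g\|_{L^1}=\int_0^T|g(s)|\,ds<\infty$ and use the elementary comparison $1+\|x\|_H^2\le 2\,(1+\|x\|_H^{2+\beta_{1,j}})$ (valid since $\beta_{1,j}\ge 0$), so that bounds expressed with the exponent $2$ feed into the desired bounds with exponent $2+\beta_{1,j}$.

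First I would treat the bound on $B$. Integrating \eqref{h5a} and taking expectations gives
\be
\E\left(\int_0^T \|B(s,Z_n(s))\|^2_{\call_2(U,H)}\,ds\right)
\le \sum_{j=1}^J \gamma_{2,j}\,\E\left(\int_0^T \|Z_n(s)\|^{q_j}_{V_j}\,ds\right)
+\E\left(\int_0^T |g(s)|\,(1+\|Z_n(s)\|^2_H)\,ds\right).
\ee
The first sum is at most $\max_j\gamma_{2,j}$ times the second term of Lemma~\ref{ues1} with $p=1$, while the last term is bounded by $\|g\|_{L^1}\big(\|g\|_{L^1}^{-1}\!+\E(\sup_t\|Z_n(t)\|^2_H)\big)$, controlled by the first term of Lemma~\ref{ues1} with $p=1$. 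The choice $p=1$ is admissible in \eqref{main p} because, as shown below, $2\min_k\gamma_{1,k}\gamma_{2,k}^{-1}>1$. This yields the second asserted inequality.

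For the bound on $A_j$ I would integrate \eqref{h4a} and take expectations, obtaining
$$
\E\left(\int_0^T \|A_j(s,Z_n(s))\|^{\frac{q_j}{q_j-1}}_{V_j^*}\,ds\right)
\le \alpha_2\,\E\left(\int_0^T \|Z_n(s)\|^{q_j}_{V_j}\big(1+\|Z_n(s)\|^{\beta_{1,j}}_H\big)\,ds\right)
+\E\left(\int_0^T |g(s)|\,\big(1+\|Z_n(s)\|^{2+\beta_{1,j}}_H\big)\,ds\right).
$$
The decisive step is to apply Lemma~\ref{ues1} with $p=1+\tfrac12\beta_{1,j}$, so that $2p-2=\beta_{1,j}$ and $2p=2+\beta_{1,j}$. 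The critical cross term is then handled by the \emph{third} term of that lemma:
$$
\E\left(\int_0^T \|Z_n(s)\|^{\beta_{1,j}}_H\|Z_n(s)\|^{q_j}_{V_j}\,ds\right)
\le \E\left(\int_0^T \|Z_n(s)\|^{2p-2}_H\sum_{k=1}^J\|Z_n(s)\|^{q_k}_{V_k}\,ds\right)
\le M_1\big(1+\|x\|^{2+\beta_{1,j}}_H\big),
$$
the top-order $g$-term is bounded by $\|g\|_{L^1}\,\E(\sup_t\|Z_n(t)\|^{2p}_H)\le\|g\|_{L^1}M_1(1+\|x\|^{2+\beta_{1,j}}_H)$ via the \emph{first} term of the lemma, and the remaining plain term $\alpha_2\,\E(\int_0^T\|Z_n\|^{q_j}_{V_j}\,ds)$ reduces to the $p=1$ case already used for $B$. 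Summing over the three contributions and using the comparison from the first paragraph gives the first asserted inequality.

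The main (and essentially only) obstacle is to confirm that the exponent $p=1+\tfrac12\beta_{1,j}$ required above is admissible, i.e.\ lies in the range \eqref{main p}; this is exactly where the structural hypothesis \eqref{h5b} enters. Since $\kappa_j\ge 1+\beta_{1,j}$ by \eqref{h5ca}, condition \eqref{h5b} yields $1+\beta_{1,j}\le\max_k\kappa_k<2\min_k\gamma_{1,k}\gamma_{2,k}^{-1}$, whence
$$
p=\tfrac12+\tfrac{1+\beta_{1,j}}{2}<\tfrac12+\min_{1\le k\le J}\gamma_{1,k}\gamma_{2,k}^{-1},
\qquad p\ge 1 .
$$
The same chain, together with $\kappa_j\ge 1$, shows $2\min_k\gamma_{1,k}\gamma_{2,k}^{-1}>1$, which justified the use of $p=1$ above. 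Finally, taking $M_2$ to be the largest of the finitely many constants produced for $j=1,\dots,J$ completes the proof.
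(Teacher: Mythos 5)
Your proof is correct, and it follows the paper's strategy in all essential respects: integrate the growth bounds \textbf{(H4)} and \textbf{(H5)}, take expectations, apply Lemma~\ref{ues1}, and justify the exponent choice $p=1+\tfrac12\beta_{1,j}$ (and $p=1$) through \eqref{h5b} and \eqref{h5ca} exactly as the paper does. The one place where you genuinely diverge is the cross term $\E\bigl(\int_0^T\|Z_n\|_H^{\beta_{1,j}}\|Z_n\|_{V_j}^{q_j}\,ds\bigr)$: the paper first splits it by Young's inequality into $\E\bigl(\sup_t\|Z_n\|_H^{2+\beta_{1,j}}\bigr)$ and $\E\bigl((\int_0^T\|Z_n\|_{V_j}^{q_j}\,ds)^{1+\beta_{1,j}/2}\bigr)$ and then invokes the first two estimates of Lemma~\ref{ues1}, whereas you absorb it directly into the \emph{third} (mixed) estimate of Lemma~\ref{ues1}, noting that $\beta_{1,j}=2p-2$ for your choice of $p$. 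Your route is slightly cleaner — it avoids Young's inequality altogether and uses the mixed bound for exactly the purpose it was derived — while the paper's splitting has the minor advantage of not needing the third estimate at all; both yield the same constant structure, so this is a cosmetic rather than substantive difference.
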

  
  \begin{proof}
  By  \eqref{h4a}, Young's inequality  and Lemma \ref{ues1} we get
    for all $j=1,\cdots, J$,
     $$
     \E \left (
     \int_0^T
   \| A_j (s,   Z_n(s) ) \|^{\frac {q_j}{q_j -1}}_{V_j^*}
   ds \right )
    \le   
    \alpha_2
    \E \left (\int_0^T
      \| Z_n (s) \|^{q_j}_{V_j} ds
      \right )
     $$
$$
+ \alpha_2 
 \E \left (
  \sup_{s\in [0,T]}  \| Z_n (s)  \|^{ \beta_{1,j}} _H 
 \int_0^T
      \| Z_n (s) \|^{q_j}_{V_j} ds
    \right )
    $$
    $$
   + \int_0^T |g(s)| ds
   \left ( 1+   \E \left ( 
   \sup_{s\in [0,T]}
    \| Z_n (s)  \|  ^{2+  \beta_{1,j} }  _H 
    \right ) \right )
 $$
 $$
   \le   
    \alpha_2
    \E \left (\int_0^T
      \| Z_n (s) \|^{q_j}_{V_j} ds
      \right )
     +
  \alpha_2  \beta_{1,j}
  (2+  \beta_{1,j} )^{-1}
 \E \left (
  \sup_{s\in [0,T]}  \| Z_n (s)  \|^{2+ \beta_{1,j}} _H 
  \right )
  $$
  $$
  + 2  \alpha_2  
  (2+  \beta_{1,j} )^{-1}
  \E \left (
  \left (
 \int_0^T
      \| Z_n (s) \|^{q_j}_{V_j} ds
      \right )^{1 +{\frac 12}  \beta_{1,j} }
    \right )
    $$
    \be\label{ues2 p1a}
   + \int_0^T |g(s)| ds
   \left ( 1+   \E \left ( 
   \sup_{s\in [0,T]}
    \| Z_n (s)  \|  ^{2+  \beta_{1,j}}  _H 
    \right ) \right ).
    \ee
    By \eqref{h5b} 
    we see that
    $ 1\le 1+ {\frac 12}
     \beta_{1,j}
    < {\frac 12 } +\gamma_{1,j}\gamma_{2,j}^{-1}$
    for all $j=1,\cdots, J$, and hence
    $p= 1+  {\frac 12}
     \beta_{1,j} $ satisfies
    \eqref{main p}. Then by Lemma \ref{ues1}
     we get
     for all $j=1,\cdots, J$,
    \be\label{ues2 p1}
     \E \left (
     \int_0^T
   \| A_j (s,   Z_n(s) ) \|^{\frac {q_j}{q_j -1}}_{V_j^*}
   ds \right )
  \le c_1 (1+ \| x  \|^{2+  \beta_{1,j}   }_H ),
 \ee
 where $c_1=c_1 (g, T, \beta)>0$ is a constant.
 
 In addition, by  \eqref{h5a}
  and Lemma \ref{ues1} we have
  $$
  \E \left (
  \int_0^T
  \| B(s, Z_n (s))\|^{2}_{\call_2(U,H)} ds
  \right )
   \le 
    \E \left (\int_0^T \sum_{j=1}^J\gamma_{2,j}
      \| Z_n (s) \|^{{q}_j}_{V_j} ds 
     \right )
  $$
$$
+ 
  \int_0^T |g (s )|  ds  
  \left (1+  \E
  \left ( \sup_{s\in [0,T]}   \|Z_n (s) \|^{2}_H 
  \right ) \right )
  \le   
  c_2 (1+\|x \|^{2}_H ),
$$
where  $c_2=c_2 (g,T)>0$
is a constant, which along with
  \eqref{ues2 p1}  completes
    the proof.
  \end{proof}

   \begin{lem}\label{ues3}
   If  {\bf (H1)}-{\bf (H5)} are fulfilled,
then   for every $\sigma \in\left(0, \frac{1}{2}\right), T>0$ and $x \in H$, there exists a constant $M_{3}=M_{3}(\sigma, T, x)>0$ such that the solution $Z_{n}$ of 
\eqref{ode1}
satisfies, for all $n \in \mathbb{N}$,
 \begin{equation}
\mathbb{E}\left(\left\|Z_{n}\right\|_{W^{\sigma, 2}\left(0, T ; \mathscr{H}^{*}\right)}^{q}\right) \leq M_{3}, 
\end{equation}
 where $q=\min \left\{2, \frac{\tilde{q}}{\tilde{q}-1}\right\}$ and $\tilde{q}$ is the number given by \eqref{qj}.
 \end{lem}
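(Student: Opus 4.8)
The plan is to decompose $Z_n$ according to \eqref{ode1} into the constant initial term, the drift part, and the stochastic part, and to estimate each separately in $W^{\sigma,2}(0,T;\mathscr{H}^*)$ by means of the Gagliardo representation
\[
\|f\|_{W^{\sigma,2}(0,T;\mathscr{H}^*)}^2
= \int_0^T \|f(t)\|_{\mathscr{H}^*}^2\, dt
+ \int_0^T\!\!\int_0^T \frac{\|f(t)-f(\tau)\|_{\mathscr{H}^*}^2}{|t-\tau|^{1+2\sigma}}\, dt\, d\tau .
\]
Write $Z_n(t)=P_n x + Y_n(t)+M_n(t)$ with $Y_n(t)=\int_0^t P_n A(s,Z_n(s))\,ds$ and $M_n(t)=\int_0^t P_n B(s,Z_n(s))Q_n\,dW(s)$. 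Since $\mathscr{H}\subseteq H\equiv H^*\subseteq \mathscr{H}^*$ continuously and $\|P_n\|_{\mathcal{L}(H)}\le 1$, the constant term contributes only $\sqrt{T}\,\|P_n x\|_{\mathscr{H}^*}\le c\sqrt T\,\|x\|_H$ (its Gagliardo seminorm vanishes). It therefore suffices to bound $\E\|Y_n\|_{W^{\sigma,2}}^q$ and $\E\|M_n\|_{W^{\sigma,2}}^q$ uniformly in $n$.

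For the drift, I would first note that $A(\cdot,Z_n(\cdot))\in L^{\tilde q/(\tilde q-1)}(0,T;\mathscr{H}^*)$: each $A_j(\cdot,Z_n)$ takes values in $V_j^*\hookrightarrow\mathscr{H}^*$, its conjugate exponent $q_j/(q_j-1)$ is at least $\tilde q/(\tilde q-1)$, so $L^{q_j/(q_j-1)}(0,T)\hookrightarrow L^{\tilde q/(\tilde q-1)}(0,T)$ on the finite interval, and Lemma \ref{ues2} supplies the uniform bound $\E\|A_j(\cdot,Z_n)\|_{L^{q_j/(q_j-1)}(0,T;V_j^*)}^{q_j/(q_j-1)}\le C$. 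Hence $Y_n$ is the primitive of an $L^{\tilde q/(\tilde q-1)}$ map, i.e. $Y_n\in W^{1,\tilde q/(\tilde q-1)}(0,T;\mathscr{H}^*)$, and I would invoke the vector-valued fractional Sobolev embedding $W^{1,\tilde q/(\tilde q-1)}(0,T;\mathscr{H}^*)\hookrightarrow W^{\sigma,2}(0,T;\mathscr{H}^*)$, valid for every $\sigma\in(0,\tfrac12)$: when $\tilde q/(\tilde q-1)\le 2$ this is the Sobolev index inequality $1-\tfrac{\tilde q-1}{\tilde q}=\tfrac1{\tilde q}\ge \sigma-\tfrac12$, which holds since $\sigma<\tfrac12$, and when $\tilde q/(\tilde q-1)>2$ one uses instead the elementary chain $W^{1,\tilde q/(\tilde q-1)}\hookrightarrow W^{1,2}\hookrightarrow W^{\sigma,2}$. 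Choosing $q\le \tilde q/(\tilde q-1)$ and applying Jensen over the finite interval then gives the required uniform bound on $\E\|Y_n\|_{W^{\sigma,2}}^q$.

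For the stochastic part the key tool is the Flandoli--Gatarek-type time regularity of It\^o integrals. By the It\^o isometry, for $0\le\tau\le t\le T$,
\[
\E\|M_n(t)-M_n(\tau)\|_{\mathscr{H}^*}^2
=\E\int_\tau^t \|P_n B(s,Z_n(s))Q_n\|_{\call_2(U,\mathscr{H}^*)}^2\, ds,
\]
and since $\|\cdot\|_{\mathscr{H}^*}\le c\|\cdot\|_H$ on $H$ while $P_n$ and $Q_n$ are contractions, the integrand is dominated by $c\,\|B(s,Z_n(s))\|_{\call_2(U,H)}^2$. Inserting this into the Gagliardo double integral, using Fubini and the elementary $s$-integration reduces the $(\tau,t)$-kernel to $|t-\tau|^{-2\sigma}$, which is integrable on $[0,T]^2$ precisely when $\sigma<\tfrac12$; this produces $\E\|M_n\|_{W^{\sigma,2}}^2\le c\,\E\int_0^T\|B(s,Z_n(s))\|_{\call_2(U,H)}^2\,ds\le C$ by Lemma \ref{ues2}, and since $q\le 2$, Jensen gives $\E\|M_n\|_{W^{\sigma,2}}^q\le C$. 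Combining the three pieces by the triangle inequality and taking $q=\min\{2,\tilde q/(\tilde q-1)\}$ (so that both moment exponents are admissible at once) completes the estimate. I expect the main obstacle to be the stochastic term, where one must verify that the projections do not degrade the $\call_2(U,\mathscr{H}^*)$-norm and that the threshold $\sigma<\tfrac12$ emerges exactly from integrability of $|t-\tau|^{-2\sigma}$; the only subtlety for the drift is that reaching all $\sigma<\tfrac12$, rather than merely $\sigma<1/\tilde q$ from a crude pointwise H\"older bound, forces one to use the sharp fractional Sobolev embedding instead of a direct increment estimate.
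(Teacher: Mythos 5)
Your decomposition of $Z_n$ and the two main tools --- the embedding $W^{1,\tilde q/(\tilde q-1)}(0,T;\mathscr{H}^*)\hookrightarrow W^{\sigma,2}(0,T;\mathscr{H}^*)$ for the drift part and a Flandoli--Gatarek-type time-regularity estimate for the stochastic part --- are exactly those of the paper's proof, and your verification of the Sobolev index condition and of the integrability of $|t-\tau|^{-2\sigma}$ for $\sigma<\tfrac12$ (the paper simply cites \cite[Lemma 2.1]{fla1} for the latter) is correct. However, your drift estimate has a genuine gap: the integrand in $Y_n$ is $P_nA(s,Z_n(s))$, not $A(s,Z_n(s))$, and you only bound the latter. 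In the stochastic term you correctly use that $P_n$ is an orthogonal projection, hence a contraction, on $H$; but in the drift term $P_n$ acts on $A(s,Z_n(s))\in V^*$, where it is defined by $P_nf=\sum_{k=1}^n(f,h_k)_{(V^*,V)}h_k$, and this extension is \emph{not} automatically bounded uniformly in $n$ as a map into $\mathscr{H}^*$: one cannot factor through $H$, since $\|P_n\|_{V^*\to H}$ blows up in general, and ``orthogonality in $H$'' gives no control on elements that live only in $V^*$.

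The missing ingredient is precisely property (iii) of the space $\mathscr{H}$ (the $h_k$ form an \emph{orthogonal} basis of $\mathscr{H}$), which yields \eqref{basis4}, i.e. $\|P_nv\|_{\mathscr{H}}\le\|v\|_{\mathscr{H}}$ for all $v\in\mathscr{H}$ and all $n$. Combining this with the duality identity $\left(P_nA(s,Z_n(s)),v\right)_{(\mathscr{H}^*,\mathscr{H})}=\left(A(s,Z_n(s)),P_nv\right)_{(V^*,V)}$ and \eqref{basis0}, one obtains $\left\|P_nA(s,Z_n(s))\right\|_{\mathscr{H}^*}\le C_{\mathscr{H}}\left\|A(s,Z_n(s))\right\|_{V^*}$ uniformly in $n$; this is the first step of the paper's proof (leading to \eqref{ues3 p1}) and is exactly what legitimizes your claim that $Y_n\in W^{1,\tilde q/(\tilde q-1)}(0,T;\mathscr{H}^*)$ with a moment bound independent of $n$. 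Once this step is supplied, the rest of your argument goes through as written.
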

 
 \begin{proof}
  We will derive the uniform estimates for
   the terms on the right-hand side of 
   \eqref{ode1}. For the first term, for very $v \in \mathscr{H}$, 
   by \eqref{basis0} and 
   \eqref{basis4}  we have
 $$
\begin{gathered}
\left|\left(P_{n} A\left(s, Z_{n}(s)\right), v\right)_{\left(\mathscr{H}^{*}, \mathscr{H}\right)}\right|=\left|\left(A\left(s, Z_{n}(s)\right), P_{n} v\right)_{\left(V^{*}, V\right)}\right| \\
\leq\left\|A\left(s, Z_{n}(s)\right)\right\|_{V^{*}}\left\|P_{n} v\right\|_{V} \leq C_{\mathscr{H}}\left\|A\left(s, Z_{n}(s)\right)\right\|_{V^{*}}\left\|P_{n} v\right\|_{\mathscr{H}} \leq C_{\mathscr{H}}\left\|A\left(s, Z_{n}(s)\right)\right\|_{V^{*}}\|v\|_{\mathscr{H}},
\end{gathered}
$$
 and hence
 $$
\left\|P_{n} A\left(s, Z_{n}(s)\right)\right\|_{\mathscr{H}^{*}} \leq C_{\mathscr{H}}\left\|A\left(s, Z_{n}(s)\right)\right\|_{V^{*}},
$$
 which along with Lemma \ref{ues2} shows that
 \begin{equation}\label{ues3 p1}
\mathbb{E}\left(\int_{0}^{T}\left\|P_{n} A\left(s, Z_{n}(s)\right)\right\|_{\mathscr{H}^{*}}^{\frac{\tilde{q}}{\bar{q}-1}} d s\right) \leq c_{1}, 
\end{equation}
 where $c_{1}=c_{1}(T, x)>0$ is a constant. By 
 \eqref{ues3 p1} we have
 \begin{equation}\label{ues3 p2a}
\mathbb{E}\left(\left\|\int_{0}^{\cdot} P_{n} A\left(s, Z_{n}(s)\right) d s\right\|_{W^{1, \frac{\tilde{q}}{\tilde{q}-1}}
\left(0, T ; \mathscr{H}^{*}\right)}^
{\frac{\tilde{q}}{\tilde{q}-1}}
\right) \leq c_{1} . 
\end{equation}

Since for all  $\sigma
\in (0, {\frac 12})$, 
the space 
${W^{1, \frac{\tilde{q}}{\tilde{q}-1}}
\left(0, T ; \mathscr{H}^{*}\right)}
$ is embedded into 
$
{W^{\sigma,  2}
\left(0, T ; \mathscr{H}^{*}\right)}$,
by   \eqref{ues3 p2a} 
 we get,  for every $\sigma \in\left(0, \frac{1}{2}\right)$,
 \begin{equation}\label{ues3 p2}
\mathbb{E}\left(\left\|\int_{0}^{.} P_{n} A\left(s, Z_{n}(s)\right) d s\right\|_{W^{\sigma, 2}\left(0, T ; \mathscr{H}^{*}\right)}^{\frac{\tilde{q}}{\bar{q}-1}}\right) \leq c_{2}, 
\end{equation}
 where $c_{2}=c_{2}(\sigma, T, x)>0$ is a constant.
 
On the other hand, by 
\cite[Lemma 2.1]{fla1} and Lemma  \ref{ues2}
we infer that for every $\sigma \in\left(0, \frac{1}{2}\right)$, 
there exists $c_{3}=c_{3}(\sigma)>0$ such that
 \begin{equation}\label{ues3 p3}
\mathbb{E}\left(\left\|\int_{0} P_{n} B\left(s, Z_{n}(s)\right) Q_{n} d W(s)\right\|_{W^{\sigma, 2}(0, T ; H)}^{2}\right) \leq c_{3} \mathbb{E}\left(\int_{0}^{T}\left\|B\left(s, Z_{n}(s)\right)\right\|_{\mathcal{L}_{2}(U, H)}^{2}\right) \leq c_{4} ,
\end{equation}
 where $c_{4}=c_{4}(\sigma, T, x)>0$ is a constant.
  Let $q=\min \left\{2, \frac{\tilde{q}}{\tilde{q}-1}\right\}$. It follows from \eqref{ode1}
   and \eqref{ues3 p2}
   -\eqref{ues3 p3}  that
 \begin{equation*}
\mathbb{E}\left(\left\|Z_{n}\right\|_{W^{\sigma, 2}\left(0, T ; \mathscr{H}^{*}\right)}^{q}\right) \leq c_{5}  
\end{equation*}
 where $c_{5}=c_{5}(\sigma, T, x)>0$, which completes the proof.
 \end{proof}

\subsection{Tightness of approximate solutions}
In this section, we prove the tightness
of the sequence
$\{Z_n\}_{n=1}^\infty$  of approximate
solutions in an appropriate space.

Let $L_{w^{*}}^{\infty}(0, T ; H)$ be the space $L^{\infty}(0, T ; H)$ endowed with the weak-* topology and $L_{w}^{q_{j}}\left(0, T ; V_{j}\right)$ be the space $L^{q_{j}}\left(0, T ; V_{j}\right)$ endowed with the weak topology. Denote by

\begin{equation}\label{caly}
\mathcal{Y}=L^{\underline{q}}(0, T ; H) \bigcap L_{w^{*}}^{\infty}(0, T ; H) \bigcap\left(\bigcap_{j=1}^{J} L_{w}^{q_{j}}\left(0, T ; V_{j}\right)\right), 
\end{equation}
 endowed with the supremum topology $\mathcal{T}$ of the corresponding topologies, 
 where $\underline{q}$ is the number given by 
 \eqref{qj}.

We now prove the   tightness of $\left\{Z_{n}\right\}_{n=1}^{\infty}$ in $\mathcal{Y}$, which 
is crucial  to  apply the
Skorokhod-Jakubowski representation theorem
in  the nonmetric space  $\mathcal{Y}$.

\begin{lem} 
\label{tig1}
   Suppose    {\bf (H1)}-{\bf (H5)} hold
   and the embedding $V \subseteq H$ is compact. Then the sequence $\left\{Z_{n}\right\}_{n=1}^{\infty}$ is tight in $(\mathcal{Y}, \mathcal{T})$.
    \end{lem}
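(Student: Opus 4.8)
The plan is to establish tightness in the topological space $(\mathcal{Y}, \mathcal{T})$ by proving tightness separately in each of the constituent topologies and then invoking the fact that tightness in the supremum topology follows from simultaneous tightness in the components. Concretely, I would show that for every $\varepsilon > 0$ there exists a set $\mathcal{K}_\varepsilon$ which is compact in $(\mathcal{Y}, \mathcal{T})$ and satisfies $\mathbb{P}(Z_n \in \mathcal{K}_\varepsilon) \geq 1 - \varepsilon$ for all $n$. The natural candidate is a bounded ball intersected across all the relevant spaces: a set of the form
\begin{equation*}
\mathcal{K}_\varepsilon = \left\{ u : \|u\|_{L^{\tilde q}(0,T;H)} \vee \sum_{j=1}^J \|u\|_{L^{q_j}(0,T;V_j)} \vee \|u\|_{L^\infty(0,T;H)} \vee \|u\|_{W^{\sigma,2}(0,T;\mathscr{H}^*)} \leq R_\varepsilon \right\},
\end{equation*}
for a sufficiently large radius $R_\varepsilon$ chosen via the uniform moment bounds of Lemmas \ref{ues1}, \ref{ues2}, \ref{ues3} together with Chebyshev's inequality.

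The key observation making $\mathcal{K}_\varepsilon$ compact in $\mathcal{T}$ is that compactness in a supremum topology is equivalent to compactness in each component topology on the common set. For the weak-* topology on $L^\infty(0,T;H)$ and the weak topologies on $L^{q_j}(0,T;V_j)$, bounded sets are relatively compact by the Banach--Alaoglu theorem and reflexivity of $V_j$; since these topologies are metrizable on bounded sets (as $H$ and $V_j$ are separable), sequential compactness suffices. The genuinely nontrivial component is the \emph{strong} topology of $L^{\underline q}(0,T;H)$. Here I would invoke the Aubin--Lions--Simon compactness machinery exactly as flagged in the introduction: using that $\mathcal{K}_\varepsilon$ is bounded in $L^{\underline q}(0,T;V)$ (hence in $L^{\underline q}(0,T;V_1 \cap V_2 \cap \cdots)$) and bounded in $W^{\sigma,2}(0,T;\mathscr{H}^*)$ with $\sigma \in (0,\tfrac12)$, and that the embedding $V \subseteq H$ is compact while $H \hookrightarrow \mathscr{H}^*$, \cite[Corollary 5]{sim1} yields relative compactness of $\mathcal{K}_\varepsilon$ in $L^{\underline q}(0,T;H)$.

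The main obstacle is reconciling these different compactness notions on a single set so that $\mathcal{K}_\varepsilon$ is genuinely compact in the \emph{non-metrizable} supremum topology $\mathcal{T}$, rather than merely relatively compact in each factor. The careful point is that one must verify that a limit extracted as a strongly convergent subsequence in $L^{\underline q}(0,T;H)$ can be taken to simultaneously be the weak-* limit in $L^\infty(0,T;H)$ and the weak limits in each $L^{q_j}(0,T;V_j)$ — that is, all the limit points coincide as elements of the ambient space $\mathbb{X}$, so that a single limit lies in $\mathcal{Y}$ and the convergence holds in $\mathcal{T}$. This follows because strong convergence in $L^{\underline q}(0,T;H)$ forces pointwise-a.e. convergence along a further subsequence, pinning down the weak limits uniquely; the uniform bounds guarantee the limit lies in the intersection with the correct norm control. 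Once $\mathcal{K}_\varepsilon$ is shown to be closed (hence compact) in $\mathcal{T}$, the Chebyshev estimate on each norm, combined with a union bound over the finitely many conditions, delivers $\mathbb{P}(Z_n \notin \mathcal{K}_\varepsilon) \leq \varepsilon$ uniformly in $n$, completing the proof of tightness.
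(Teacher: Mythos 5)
Your proposal follows essentially the same route as the paper's proof: define a sublevel set of the $L^\infty(0,T;H)$, $L^{q_j}(0,T;V_j)$ and $W^{\sigma,2}(0,T;\mathscr{H}^*)$ norms whose probability is controlled by Chebyshev via the uniform estimates of Lemmas \ref{ues1} and \ref{ues3}, then prove compactness in $(\mathcal{Y},\mathcal{T})$ by combining metrizability of the weak/weak-* topologies on bounded sets (separability and reflexivity) with Simon's \cite[Corollary 5]{sim1} applied to the triple $V \subseteq H \subseteq \mathscr{H}^*$ for the strong $L^{\underline{q}}(0,T;H)$ component. Your additional care in identifying the weak, weak-* and strong limits as a single element (so that the subsequence converges in $\mathcal{T}$ and the set is closed) is a point the paper leaves implicit, and is correct.
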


      \begin{proof}
    By Lemmas \ref{ues1} and 
    \ref{ues3}, we find that for every $\sigma \in\left(0, \frac{1}{2}\right), T>0$ and $x \in H$, there exists $c_{1}=c_{1}(\sigma, T, x)>0$ such that
 $$
\mathbb{E}\left(\sup _{t \in[0, T]}\left\|Z_{n}(t)\right\|_{H}^{2}+\sum_{j=1}^{J} \int_{0}^{T}\left\|Z_{n}(t)\right\|_{V_{j}}^{q_{j}} d t+\left\|Z_{n}\right\|_{W^{\sigma, 2}\left(0, T ; \mathscr{H}^{*}\right)}^{q}\right) \leq c_{1},
$$
 where $q$ is the number given in Lemma 
 \ref{ues3}. Therefore, for every $\varepsilon>0$, there exists $R=$ $R(\varepsilon, \sigma, T, x)>0$ such that for all $n \in \mathbb{N}$,
 $$
\mathbb{P}\left(\sup _{t \in[0, T]}\left\|Z_{n}(t)\right\|_{H}+\sum_{j=1}^{J} \int_{0}^{T}\left\|Z_{n}(t)\right\|_{V_{j}}^{q_{j}} d t+\left\|Z_{n}\right\|_{W^{\sigma, 2}\left(0, T ; \mathscr{H}^{*}\right)}>R\right)<\varepsilon .
$$
 Let $K$ be the subset of $\mathcal{Y}$ given by
 \begin{equation}\label{tig1 p1}
K=\left\{y \in \mathcal{Y}: \sup _{t \in[0, T]}\|y(t)\|_{H}+\sum_{j=1}^{J} \int_{0}^{T}\|y(t)\|_{V_{j}}^{q_{j}} d t+\|y\|_{W^{\sigma, 2}\left(0, T ; \mathscr{H}^{*}\right)} \leq R\right\} 
\end{equation}
 It is clear that $\mathbb{P}\left(Z_{n} \in K_{1}\right)>1-\varepsilon$ for all $n \in \mathbb{N}$. Next, we prove $K$ is a compact subset of $(\mathcal{Y}, \mathcal{T})$.

Since for every $j=1, \cdots, J, L^{q_{j}}\left(0, T ; V_{j}\right)$ is a separable reflexive Banach space, we find that the weak topology of $L^{q_{j}}\left(0, T ; V_{j}\right)$ on the bounded subset $K$ is metrizable. Since $K$ is bounded in $L^{\infty}(0, T ; H)=\left(L^{1}(0, T ; H)\right)^{*}$ and $L^{1}(0, T ; H)$ is separable, we infer that the weak-* topology of $L^{\infty}(0, T ; H)$ is metrizable on $K$. Since $W^{\sigma, 2}\left(0, T ; \mathscr{H}^{*}\right)$ is also a metric space, we conclude that the topological space $(K, \mathcal{T})$ is metrizable. Consequently, $(K, \mathcal{T})$ is compact if and only if it is sequentially compact.

In the sequel, we 
  prove $(K, \mathcal{T})$ is sequentially compact. Let $\left\{y_{n}\right\}_{n=1}^{\infty}$ be an arbitrary sequence in $K$. It is sufficient to show $\left\{y_{n}\right\}_{n=1}^{\infty}$ has a convergent subsequence in each space involved in
  \eqref{caly}. By \eqref{tig1 p1}
   we see that $\left\{y_{n}\right\}_{n=1}^{\infty}$ is bounded both in $L^{q_{j}}\left(0, T ; V_{j}\right)$ and $L^{\infty}(0, T ; H)$, and hence, up to a subsequence, $\left\{y_{n}\right\}_{n=1}^{\infty}$ is weakly convergent in $L^{q_{j}}\left(0, T ; V_{j}\right)$, and weak-* convergent in $L^{\infty}(0, T ; H)$. It remains to show $\left\{y_{n}\right\}_{n=1}^{\infty}$ has a convergent subsequence in $L^{q}(0, T ; H)$.

By \eqref{tig1 p1}  we see that there exists $c_{2}=c_{2}(R)>0$ such that for all $n \in \mathbb{N}$,
 \begin{equation}\label{tig1 p2}
\int_{0}^{T}\left\|y_{n}(t)\right\|_{V}^{\underline{q}} d t \leq c_{2} .
\end{equation}
 By \eqref{tig1 p1}-\eqref{tig1 p2}
 we find that $\left\{y_{n}\right\}_{n=1}^{\infty}$ is bounded in $L^{\underline{q}}(0, T ; V) \bigcap W^{\sigma, 2}\left(0, T ; \mathscr{H}^{*}\right)$ for every fixed $\sigma \in\left(0, \frac{1}{2}\right)$, and hence by the compactness of the embedding $V \subseteq H$, we infer from 
 \cite[Corollary 5]{sim1}
  that $\left\{y_{n}\right\}_{n=1}^{\infty}$ is precompact in $L^{\underline{q}}(0, T ; H)$. This completes the proof.
   \end{proof}

   By Lemma \ref{tig1}
 and the Skorokhod-Jakubowski representation theorem on a topological space given by Proposition 
 \ref{prop_sj} in Appendix, we have:

\begin{lem} 
\label{tig2}
   Suppose       {\bf (H1)}-{\bf (H5)}  hold,
   $p$ satisfies \eqref{main p}
   and the embedding $V \subseteq H$ is compact. 
   Then there exist a probability space $(\widetilde{\Omega}, \widetilde{\mathcal{F}}, \widetilde{\mathbb{P}})$, random variables $(\widetilde{Z}, \widetilde{W})$ and $\left(\widetilde{Z}_{n}, \widetilde{W}_{n}\right)$ defined on $(\widetilde{\Omega}, \widetilde{\mathcal{F}}, \widetilde{\mathbb{P}})$ such that, up to a subsequence:
   
(i) The law of $\left(\widetilde{Z}_{n}, \widetilde{W}_{n}\right)$ coincides with $\left(Z_{n}, W\right)$ in $\mathcal{Y}
 \times C\left([0, T],
 U_{0}\right)$ 
 for all $n \in \mathbb{N}$.

(ii) $(\widetilde{Z}_{n},
\widetilde{W}_{n})
 \rightarrow 
 (\widetilde{Z}, \widetilde{W})$
  in $\mathcal{Y}
 \times C\left([0, T],
 U_{0}\right)$,
   $\widetilde{\mathbb{P}}$-almost surely.

(iii) $\left\{\widetilde{Z}_{n}\right\}_{n=1}^{\infty}$ satisfies the uniform estimates: for all $n \in \mathbb{N}$,
 \begin{align}
& \widetilde{\mathbb{E}}\left(\sup _{t \in[0, T]}\left\|\widetilde{Z}_{n}(t)\right\|_{H}^{2 p}\right)+\widetilde{\mathbb{E}}\left(\left(\int_{0}^{T} \sum_{j=1}^{J}\left\|\widetilde{Z}_{n}(t)\right\|_{V_{j}}^{q_{j}} d t\right)^{p}\right) 
 \nonumber\\
+ & \widetilde{\mathbb{E}}\left(\int_{0}^{T}\left\|\widetilde{Z}_{n}(s)\right\|_{H}^{2 p-2} \sum_{j=1}^{J}\left\|\widetilde{Z}_{n}(s)
\right\|_{V_{j}}^{q_{j}} d s\right) \leq M_{3}\left(1+\|x\|_{H}^{2 p}\right), \label{tig2 1} 
\end{align}
 and
 \begin{align}
& \widetilde{\mathbb{E}}\left(\sup _{t \in[0, T]}\|\widetilde{Z}(t)\|_{H}^{2 p}\right)+\mathbb{E}\left(\left(\int_{0}^{T} \sum_{j=1}^{J}\|\widetilde{Z}(t)\|_{V_{j}}^{q_{j}} d t\right)^{p}\right) 
\nonumber\\
+ & \widetilde{\mathbb{E}}\left(\int_{0}^{T}\|\widetilde{Z}(s)\|_{H}^{2 p-2} \sum_{j=1}^{J}\|\widetilde{Z}(s)\|_{V_{j}}^{q_{j}} d s\right) \leq M_{3}\left(1+\|x\|_{H}^{2 p}\right) . 
\label{tig2 2} 
\end{align}

In addition, for all $n \in \mathbb{N}$ and $j=1, \cdots, J$,
   \begin{equation}\label{tig2 3}
\widetilde{\mathbb{E}}\left(\int_{0}^{T}\left\|A_{j}\left(s, \widetilde{Z}_{n}(s)\right)\right\|_{V_{j}^{*}}^{\frac{q_{j}}{q_{j}-1}} d s\right)+\widetilde{\mathbb{E}}\left(\int_{0}^{T}\left\|P_{n} B\left(s, \widetilde{Z}_{n}(s)\right) Q_{n}\right\|_{\mathcal{L}_{2}(U, H)}^{2} d s\right)
\leq M_{3}\left(1+\|x\|_{H}^{2+\beta_{1,j}}\right), 
\end{equation}
 where $M_{3}=M_{3}(T, p)>0$ is a constant.
     \end{lem}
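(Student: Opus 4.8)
The plan is to apply the Skorokhod--Jakubowski representation theorem (Proposition \ref{prop_sj}) to the sequence of joint laws of $(Z_n, W)$ on the product space $\mathcal{Y} \times C([0,T], U_0)$, and then to propagate the uniform bounds of Lemmas \ref{ues1} and \ref{ues2} across the resulting equality of laws. First I would establish tightness of $\{(Z_n, W)\}_{n=1}^\infty$ in $\mathcal{Y} \times C([0,T], U_0)$. Tightness of $\{Z_n\}_{n=1}^\infty$ in $(\mathcal{Y}, \mathcal{T})$ is precisely Lemma \ref{tig1}. Since $W$ does not depend on $n$ and a single Borel probability measure on the Polish space $C([0,T], U_0)$ is automatically tight, the constant factor is tight, and the product of tight families is tight in the product topology: given $\varepsilon>0$ one chooses a compact $K_1\subseteq\mathcal{Y}$ with $\mathbb{P}(Z_n\in K_1)>1-\tfrac{\varepsilon}{2}$ for all $n$ and a compact $K_2\subseteq C([0,T],U_0)$ with $\mathbb{P}(W\in K_2)>1-\tfrac{\varepsilon}{2}$, so that $K_1\times K_2$ is compact with $\mathbb{P}((Z_n,W)\in K_1\times K_2)>1-\varepsilon$.

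Next I would verify the hypotheses of the Skorokhod--Jakubowski theorem, namely the existence of a countable family of continuous real-valued functions on $\mathcal{Y} \times C([0,T], U_0)$ that separates points. On each factor $L^{q_j}_w(0,T;V_j)$, separability of the dual $L^{q_j/(q_j-1)}(0,T;V_j^*)$ (a consequence of the separability and reflexivity of $V_j$) furnishes such a family via duality pairings against a countable dense set; on $L_{w^*}^\infty(0,T;H)=(L^1(0,T;H))^*$ the separability of $L^1(0,T;H)$ does the same; and $C([0,T],U_0)$ is Polish. Combining these yields the required separating family on the product, so Proposition \ref{prop_sj} applies. This produces the new probability space $(\widetilde{\Omega},\widetilde{\mathcal{F}},\widetilde{\mathbb{P}})$, the random variables $(\widetilde{Z}_n,\widetilde{W}_n)$ and $(\widetilde{Z},\widetilde{W})$, the equality of laws (i), and the almost-sure convergence (ii) along a subsequence. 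That $\widetilde{W}_n$ and $\widetilde{W}$ are cylindrical Wiener processes in $U$ follows because equality of laws in $C([0,T],U_0)$ preserves the Wiener covariance.

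It remains to transfer the uniform estimates. The bounds \eqref{tig2 1} and \eqref{tig2 3} for $\widetilde{Z}_n$ follow from Lemmas \ref{ues1} and \ref{ues2} together with (i): every functional on the left-hand sides (the essential-supremum norm in $L^\infty(0,T;H)$, the $L^{q_j}(0,T;V_j)$ norms, and the $V_j^*$- and $\mathcal{L}_2(U,H)$-integrals) is Borel measurable on $\mathcal{Y}$ and is therefore determined by the law, so its expectation is unchanged under the law-preserving identification; here one uses that the continuous $H$-valued process $Z_n$ satisfies $\|Z_n\|_{L^\infty(0,T;H)}=\sup_{t\in[0,T]}\|Z_n(t)\|_H$ almost surely, so the supremum norm is read as the essential supremum on the new space. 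For the limit bound \eqref{tig2 2} on $\widetilde{Z}$ I would pass to the limit in the uniform bounds for $\widetilde{Z}_n$ using (ii): the $L^\infty(0,T;H)$ norm is weak-$*$ lower semicontinuous and the $L^{q_j}(0,T;V_j)$ norms are weakly lower semicontinuous, so Fatou's lemma yields \eqref{tig2 2} from \eqref{tig2 1}.

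The main obstacle is twofold and lies at the interface of topology and measurability. First, one must rigorously confirm that the non-metrizable space $\mathcal{Y}$ carries the countable point-separating family of continuous functionals demanded by Jakubowski's theorem, which is exactly why the weak and weak-$*$ topologies are built from separable preduals. Second, one must make precise the transfer and lower-semicontinuity arguments for the supremum-in-time norm and for the mixed integral $\int_0^T \|\cdot\|_H^{2p-2}\sum_{j=1}^J\|\cdot\|_{V_j}^{q_j}\,ds$, which are only Borel measurable (resp. lower semicontinuous), not continuous, on $\mathcal{Y}$; the pathwise $L^\infty(0,T;H)$-boundedness guaranteed by tightness in $L_{w^*}^\infty(0,T;H)$, as recorded in Remark \ref{2.8a}, is what allows the supremum norm to be interpreted consistently on $\widetilde{\Omega}$ and the estimates to close.
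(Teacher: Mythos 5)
Your proposal is correct and is essentially the paper's own argument: tightness of $(Z_n,W)$ in $\mathcal{Y}\times C([0,T],U_0)$ from Lemma \ref{tig1} together with the Polish factor, a countable family of $\mathcal{T}$-continuous point-separating functionals so that Proposition \ref{prop_sj} applies, transfer of the bounds of Lemmas \ref{ues1}--\ref{ues2} through the equality of laws to get \eqref{tig2 1} and \eqref{tig2 3}, and lower semicontinuity plus Fatou to get \eqref{tig2 2}. The only minor deviations are that the paper builds its separating family from a countable subset of the single separable dual $(L^{\underline{q}}(0,T;V))^{*}$ rather than factor-by-factor as you do, and it obtains \eqref{tig2 3} by re-running the proof of Lemma \ref{ues2} starting from \eqref{tig2 1} and the growth conditions {\bf (H4)}--{\bf (H5)}, rather than by transferring the composed functionals of $A_j$ and $B$ directly through the law identification.
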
  
   
   \begin{proof}
We will apply Proposition 
\ref{prop_sj} to the sequence $\left\{\left(Z_{n}, W\right)\right\}_{n=1}^{\infty}$ in the topological space $\mathcal{Y} \times$ $C\left([0, T], U_{0}\right)$. To that end, we first need to find a sequence of continuous real-valued functions on $(\mathcal{Y}, \mathcal{T})$ that separates points of $\mathcal{Y}$.

Recall that $\mathcal{T}$ is the supremum topology of the corresponding topologies in $\mathcal{Y}$. Consequently, by 
\eqref{caly} 
 we see that $\mathcal{T}$ is stronger than each of the following topology:
 $$
L^{\underline{q}}(0, T ; H), 
\ \ L_{w^{*}}^{\infty}(0, T ; H) \quad \text { and } \quad L_{w}^{q_{j}}\left(0, T ; V_{j}\right), \quad j=1, \cdots, J .
$$
    Since $\left(L^{\underline{q}}(0, T ; V)\right)^{*}$ is separable, there exists a sequence $\{\phi_{j}^{*}\}_{j=1}^{\infty}$ in $\left(L^{\underline{q}}(0, T ; V)\right)^{*}$ that separates points of $L^{\underline{q}}(0, T ; V)$. Since $\mathcal{T}$ is stronger than the topology of $L^{\underline{q}}(0, T ; V)$, we find that $\phi_{j}^{*}$ : $(\mathcal{Y}, \mathcal{T}) \rightarrow \mathbb{R}$ is continuous and $\{\phi_{j}^{*}\}_{j=1}^{\infty}$ separates points of $\mathcal{Y}$.

Note that $C\left([0, T], U_{0}\right)$ is a polish space, which along with Lemma \ref{tig1}
shows that all conditions of Proposition 
\ref{prop_sj} are satisfied. Then (i)-(ii) in Lemma 
\ref{tig2}
 follows from Proposition 
 \ref{prop_sj}
  immediately. By (i) and Lemma \ref{ues1}, we obtain 
  \eqref{tig2 1}. 
  By (ii) and \eqref{tig2 1},  we obtain 
  \eqref{tig2 2}. The uniform estimate 
  \eqref{tig2 3}
   follows from \eqref{tig2 1}
   and the idea of Lemma \ref{ues2}. 
   \end{proof}

Note that Lemma \ref{tig2}
(i)  implies that
$\widetilde{\p}
(\widetilde{Z}_{n}
\in C([0,T], H_n) )
= {\p}
( {Z}_{n}
\in C([0,T], H_n) )$.
Since
${\p}
( {Z}_{n}
\in C([0,T], H_n) ) =1$, we infer that
$\widetilde{Z}_{n}
\in C([0,T], H_n)$, 
$\widetilde{\p}$-almost surely.

    Given $n\in \N$, let
    $(\widetilde{\calf}^n_t)_{t\in [0,T]}$ be the filtration
    which satisfies   the usual condition   generated by
    $$
    \{  \widetilde{Z}_n (s), \ 
    \widetilde{W}_n (s): \ s\in [0, t]\}.
    $$
    Similarly,
     let
    $(\widetilde{\calf}_t)_{t\in [0,T]}$ be the filtration
    which satisfies   the usual condition   generated by
    $$
    \{  \widetilde{Z} (s), \ 
    \widetilde{W} (s): \ s\in [0, t]\}.
    $$
    Then $   \widetilde{W}^n $
    is an  $(\widetilde{\calf}^n_t)$-cylindrical Wiener process, 
    and
     $   \widetilde{W} $
    is an  $(\widetilde{\calf}_t)$-cylindrical Wiener process 
    on  $U$, respectively.
    
    Since $Z_n$ satisfies \eqref{ode1}, by Lemma
    \ref{tig2}  (i) we find  that
    $ \widetilde{Z}_n$ satisfies
   \be\label{ode1n}
     \widetilde{Z}_n (t)
     = P_n x
     +\int_0^t P_n A(s,
      \widetilde{Z}_n (s)) ds
      +
      \int_0^t P_n 
      B(s,  \widetilde{Z}_n (s)) Q_n d
       \widetilde{W}_n (s),
       \ee
    which can be proved by the argument of    \cite{ben1}.
    Next, we prove $
    (\widetilde{\Omega},
    \widetilde{\calf}, ( \widetilde{\calf}_t)_{t\in [0,T]},
    \widetilde{\p}, \widetilde{Z} , \widetilde{W})
    $ is a martingale solution of \eqref{sde1}-\eqref{sde2}.

\subsection{Proof of Theorem \ref{main}}

We now prove Theorem 
\ref{main}
 by the monotone argument.

    {\bf Step (i)}: Weak convergence.
     By Lemma \ref{tig2}, we infer that
    there exist 
    $\widetilde{B} \in 
      L^{2}
    ([0,T] \times \widetilde{\Omega}, 
    \call_2(U,H))$ and 
    $ \widetilde{A}_j \in
     \ L^{{\frac {q_j}{q_j -1}}}
    ([0,T] \times \widetilde{\Omega}, V^*_j)$
    for every $ j=1,\cdots, J$,   such that,
    up to a subsequence,
     \be\label{ms1 p1a}
    \widetilde{Z}_n
    \to \widetilde{Z}
    \ \text{ weakly   in   } \ L^2
    ([0,T] \times \widetilde{\Omega}, H),  
      \ee
    \be\label{ms1 p1}
    \widetilde{Z}_n
    \to \widetilde{Z}
    \ \text{ weakly   in   } \ L^{q_j}
    ([0,T] \times \widetilde{\Omega}, V_j), \quad \forall \  j=1,\cdots, J,
      \ee
       \be\label{ms1 p2}
   A_j (\cdot,  \widetilde{Z}_n)
    \to \widetilde{A}_j
    \ \text{ weakly   in   } \ L^{{\frac {q_j}{q_j -1}}}
    ([0,T] \times \widetilde{\Omega}, V^*_j), \quad \forall \  j=1,\cdots, J.
      \ee
        \be\label{ms1 p3}
        P_n B(
   \cdot,  \widetilde{Z}_n) Q_n 
    \to \widetilde{B}
    \ \text{ weakly   in   } \ 
      L^{2}
    ([0,T] \times \widetilde{\Omega}, 
    \call_2(U,H)).
      \ee
     On the other hand, by Lemma \ref{tig2} (ii)
     and \eqref{caly} we have,
     $ \widetilde{\p}$-almost surely,
       \be\label{ms1 p4}
  \lim_{n\to \infty}
   \| \widetilde{Z}_n
 - \widetilde{Z}\|_{ L^{\underline{q}}(0,T; H)}
 = 0,
      \ee
            \be\label{ms1 p4a}
    \widetilde{Z}_n
    \to \widetilde{Z}
    \ \text{ weakly  \   in   } \ 
         L^{ {q_j}}(0,T; V_j),
         \quad \forall \  j=1, \cdots, J,
         \ee 
        and
                \be\label{ms1 p5}
    \widetilde{Z}_n
    \to \widetilde{Z}
    \ \text{ weak-*  \   in   } \ 
          L^\infty
    (0,T; H).
         \ee
By
\eqref{ms1 p4a} and
 \eqref{ms1 p5} we infer that  $ \widetilde{\p}$-almost surely,
   \be\label{ms1 p6}
  \sup_{1\le j\le J } 
  \   \sup_{n\in \N}
     \|   \widetilde{Z}_n
      \|_{  L^{ {q_j}}(0,T; V_j)}
      <\infty
      \quad \text{and}
      \quad
  \sup_{n\in \N}
   \|  \widetilde{Z}_n\|_{  L^\infty
    (0,T; H)} <\infty .
     \ee
     Recall that $\{h_k\}_{k=1}^\infty$ is an orthonormal
     basis of $H$ and $h_k \in V$ for all $k\in \N$.
     
       {\bf Step (ii)}:   Prove    $\widetilde{B}
       =B(\cdot, \widetilde{Z})$ almost everywhere
       on $[0,T]
       \times \widetilde{\Omega}$, and  for
       all $t\in [0,T]$, $\widetilde{\p}$-almost surely,
         \be\label{limeq}
     \widetilde{Z}  (t)
     =   x
     +\int_0^t \sum_{j=1}^J  \widetilde{A} _j (s) ds
      +
      \int_0^t 
      \widetilde{B} (s )   d
       \widetilde{W}  (s) \quad \text{in } \ V^*,
       \ee
       $\widetilde{\p}$-almost surely.

      We first prove    $\widetilde{B}
       =B(\cdot, \widetilde{Z})$ almost everywhere
       on $[0,T]
       \times \widetilde{\Omega}$.
       To that end,   we identify
         $h_k^* \in H^*$ with $h_k\in H$
         by Riesz's representation theorem.
       Then  for all $n\ge k$, we have
   $$ \| h_k^* P_n B(
   \cdot,  \widetilde{Z}_n)Q_n
   - 
    h_k^* B(
   \cdot,  \widetilde{Z})
   \|^2_{
   L^2([0,T]\times  \widetilde{\Omega},
   \call_2(U,\R)  )
   }
   $$
    $$ 
   \le  3
    \| h_k^* P_n  \left ( B(
   \cdot,  \widetilde{Z}_n) 
   -   B(
   \cdot,  \widetilde{Z})
   \right ) Q_n
   \|^2_{
   L^2([0,T]\times  \widetilde{\Omega},
   \call_2(U,\R)  )
   }
   $$
    $$ 
   + 3
    \| h_k^* P_n B(
   \cdot,  \widetilde{Z})
   \left ( Q_n
   -  I \right ) 
   \|^2_{
   L^2([0,T]\times  \widetilde{\Omega},
   \call_2(U,\R)  )
   }
   +   3
    \| h_k^* \left ( P_n -I \right )
     B(
   \cdot,  \widetilde{Z})
   \|^2_{
   L^2([0,T]\times  \widetilde{\Omega},
   \call_2(U,\R)  )
   }
   $$
    $$ 
   =3
    \|   h_k^* (B(
   \cdot,  \widetilde{Z}_n) 
   -   B(
   \cdot,  \widetilde{Z}))Q_n
   \|^2_{
   L^2([0,T]\times  \widetilde{\Omega},
   \call_2(U,\R)  )
   }
   $$
$$
   + 3
    \| h_k^* B(
   \cdot,  \widetilde{Z})
   \left ( Q_n
   -  I \right ) 
   \|^2_{
   L^2([0,T]\times  \widetilde{\Omega},
   \call_2(U,\R)  )
   }
$$
$$ 
\le 3
    \|   h_k^* (B(
   \cdot,  \widetilde{Z}_n) 
   -   B(
   \cdot,  \widetilde{Z})) 
   \|^2_{
   L^2([0,T]\times  \widetilde{\Omega},
   \call_2(U,\R)  )
   }
   $$
\be\label{ms1 p7a}
   + 3
    \| h_k^* B(
   \cdot,  \widetilde{Z})
   \left ( Q_n
   -  I \right ) 
   \|^2_{
   L^2([0,T]\times  \widetilde{\Omega},
   \call_2(U,\R)  )
   }.
   \ee
   
      By \eqref{ms1 p4} and \eqref{ms1 p6},
       we obtain from \eqref{h5c} that,
       for every fixed $k\in \N$,
       $\widetilde{\p}$-almost
         surely,   
                  \be\label{ms1 p7b} 
         \lim_{n\to \infty}  
          \|   h_k^* (B(
   \cdot,  \widetilde{Z}_n) 
   -   B(
   \cdot,  \widetilde{Z})) 
   \|^2_{
   L^2([0,T]\times  \widetilde{\Omega},
   \call_2(U,\R)  )
   }=0.
   \ee 
   On the other hand, since 
 $h^*_k B(
   \cdot,  \widetilde{Z})
 \in 
   L^2([0,T]\times  \widetilde{\Omega},
   \call_2(U,\R)  )$, we find that
   the last   term  on the
   right-hand side of \eqref{ms1 p7a}
   converge to zero, and thus
   by \eqref{ms1 p7a}-\eqref{ms1 p7b} we obtain,
   for every $k\in \N$,
\be\label{ms1 p7c}
\lim_{n\to \infty}
 h^*_k P_n B(
   \cdot,  \widetilde{Z}_n)Q_n
 = 
    h^*_k B(\cdot,  \widetilde{Z})
 \quad \text{strongly in }\ 
   L^2([0,T]\times  \widetilde{\Omega},
   \call_2(U,\R )).
  \ee
  
 By \eqref{ms1 p3} we have, 
  for every $k\in \N$,
\be\label{ms1 p7d}
\lim_{n\to \infty}
 h^*_k P_n B(
   \cdot,  \widetilde{Z}_n)Q_n
 = 
    h^*_k  \widetilde{B}
 \quad \text{weakly in }\ 
   L^2([0,T]\times  \widetilde{\Omega},
   \call_2(U,\R )).
  \ee
  By \eqref{ms1 p7c}-\eqref{ms1 p7d} we get,
   for every $k\in \N$,
$$
   h^*_k  \widetilde{B}
    =  h^*_k B(\cdot,  \widetilde{Z})
    \quad \text{  in }\ 
   L^2([0,T]\times  \widetilde{\Omega},
   \call_2(U,\R )),
$$
and hence  $\widetilde{B}
    =     B(\cdot,  \widetilde{Z})$
    in $\call_2(U,H )$
    a.e. on $[0,T] \times \widetilde{\Omega}$.

   We now prove \eqref{limeq}.
         Let $\xi$ be an arbitrary
       function in $L^\infty([0,T]\times
       \widetilde{\Omega})$.
       Then all $n, k\in \N$ with $n \ge k$,
       by \eqref{ode1n} we have
       $$
       \widetilde{\E} \left (
       \int_0^T \xi (t)
       (
     \widetilde{Z}_n (t), h_k)_H dt
     \right )
     = \widetilde{\E} \left (
       \int_0^T \xi (t)
       (  P_n x, h_k)_H dt
       \right )
       $$
       $$
     +\sum_{j=1}^J 
      \widetilde{\E} \left (
       \int_0^T \xi (t)
      \left  (
     \int_0^t  ( P_n A_j (s,
      \widetilde{Z}_n (s)), h_k)_{(V^*,V)} ds
     \right  )dt \right )
     $$
  \be\label{ms1 p8a}
      + \widetilde{\E} \left (
       \int_0^T \xi (t)
      \left  (
      \int_0^t P_n 
      B(s,  \widetilde{Z}_n (s)) Q_n d
       \widetilde{W}_n (s), h_k\right )_H dt
       \right ).
     \ee
     We first deal with the convergence of the
    last term in \eqref{ms1 p8a}, for which  we have 
  for all $n\ge k$,    
    $$   \widetilde{\E} \left (
       \int_0^T \xi (t)
      \left  (
      \int_0^t P_n 
      B(s,  \widetilde{Z}_n (s)) Q_n d
       \widetilde{W}_n (s), h_k\right )_H dt
       \right )
       $$
          \be\label{ms1 p8b} 
          =
       \widetilde{\E} \left (
       \int_0^T \xi (t) \left (
   \int_0^t  
      h^*_k P_n  B(s,  \widetilde{Z}_n (s))Q_n   d
       \widetilde{W}_n (s) \right ) dt
       \right ).
       \ee
          By    
        Lemma \ref{tig2} (ii) and \eqref{ms1 p7c},
  we infer  from
    \cite[Lemma 2.1]{deb1} that
                $$
    \lim_{n\to \infty}
       \int_0^T
        \left |
         \int_0^t
       h^*_k P_n  B(s,  \widetilde{Z}_n (s))Q_n   d
       \widetilde{W}_n (s)
    -
            \int_0^t
      h^*_k   B(s,  \widetilde{Z} (s))   d
       \widetilde{W} (s)  
       \right |^2   dt =0 ,
$$
         in probability, and hence
                  \be\label{ms1 p8c} 
    \lim_{n\to \infty}
       \int_0^T
         \left |
         \int_0^t
       h^*_k P_n  B(s,  \widetilde{Z}_n (s))Q_n   d
       \widetilde{W}_n (s)
    -
            \int_0^t
      h^*_k   B(s,  \widetilde{Z} (s))   d
       \widetilde{W} (s)  
       \right |    dt      =0  \ 
       \ \text{in probability}.
       \ee  
      By \eqref{h5a} and \eqref{tig2 1} we get
        $$
         \widetilde{\E} \left (
  \int_0^T \|
      B(s,  \widetilde{Z}_n (s)) 
     \|^2_{\call_2(U, H)} ds
    \right )
    $$
    $$
    \le
    \sum_{j=1}^J \gamma_{2,j}
    \widetilde{\E} \left (
  \int_0^T \|  \widetilde{Z}_n (s) \|^{q_j}_{V_j}
  ds
  \right )
  + 
  \left (1 +  \widetilde{\E} \left (
  \sup_{0\le s\le T} \|  \widetilde{Z}_n (s) \|^2_H
  \right ) \right )
  \int_0^T |g(s)| ds
  $$
      \be\label{ms1 p8d}
  \le c_1 (1+ \| x\|^2_H),
\ee
  where $c_1 $ is a  positive constant
  depending only on $T$, $g$ and
   $  \gamma_{2,j}$ with $j=1\cdots, J$.
   Similarly, by  \eqref{h5a} and \eqref{tig2 2} we get
     \be\label{ms1 p8e}
           \widetilde{\E} \left (
  \int_0^T \|
        B(s,  \widetilde{Z} (s)) 
     \|^2_{\call_2(U, H)} ds
    \right ) 
  \le c_2 (1+ \| x\|^2_H),
\ee
  where $c_2 $ is a  positive constant
  depending only on $T$, $g$ and
   $  \gamma_{2,j}$ with $j=1\cdots, J$.
        By    \eqref{ms1 p8d}-\eqref{ms1 p8e} we have
      for all $n \ge k$,
             $$ 
     \widetilde{\E} \left (
     \left (
       \int_0^T   \left |   
   \int_0^t  
      h^*_k P_n B(s,  \widetilde{Z} _n(s))  Q_n  d
       \widetilde{W}_n (s) 
       -  \int_0^t  
      h^*_k B(s,  \widetilde{Z} (s))   d
       \widetilde{W}  (s) 
       \right | dt \right )^2
       \right ) 
     $$   
      $$ 
      \le 2 
     \widetilde{\E} \left (
     \left (
       \int_0^T   \left |   
   \int_0^t  
       h^*_k P_n B(s,  \widetilde{Z} _n(s))  Q_n  d
       \widetilde{W}_n (s) 
      \right |  dt
       \right ) ^2\right )
       +
       2 
     \widetilde{\E} \left (
     \left (
       \int_0^T   \left |   
   \int_0^t  
      h^*_k B(s,  \widetilde{Z} (s))   d
       \widetilde{W} (s) \right |  dt
       \right ) ^2\right )
       $$   
         $$ 
      \le 2 T
     \widetilde{\E} \left (
     \int_0^T   \left |   
   \int_0^t  
        h^*_k P_n B(s,  \widetilde{Z} _n(s))  Q_n  d
       \widetilde{W}_n (s) 
       \right |^2  dt
      \right )
       +
       2 T
     \widetilde{\E} \left (
   \int_0^T   \left |   
   \int_0^t  
      h^*_k B(s,  \widetilde{Z} (s))   d
       \widetilde{W} (s) \right |^2  dt
     \right )
       $$   
          $$ 
   = 2 T \int_0^T
     \widetilde{\E} \left (
       \left |   
   \int_0^t  
        h^*_k P_n B(s,  \widetilde{Z} _n(s))  Q_n  d
       \widetilde{W}_n (s) 
       \right |^2  
      \right )dt
       +
       2 T \int_0^T
     \widetilde{\E} \left (
     \left |   
   \int_0^t  
      h^*_k B(s,  \widetilde{Z} (s))   d
       \widetilde{W} (s) \right |^2  
     \right )dt
       $$   
          $$ 
   = 2 T \int_0^T
     \widetilde{\E} \left (
    \int_0^t  \|
       h^*_k P_n B(s,  \widetilde{Z} _n(s))  Q_n  
        \|^2_{\call_2(U,\R)}
     ds  \right )dt
       +
       2 T \int_0^T
     \widetilde{\E} \left (
   \int_0^t   \|
      h^*_k B(s,  \widetilde{Z} (s)) \|^2_{\call_2(U, \R)}
     ds\right )dt
       $$   
      $$
   \le 2  T^2  
     \widetilde{\E} \left (
    \int_0^T  \|
       B(s,  \widetilde{Z}_n  (s))  \|^2_{\call_2(U,H)}
     ds  \right ) 
     + 2  T^2  
     \widetilde{\E} \left (
    \int_0^T  \|
       B(s,  \widetilde{Z}   (s))  \|^2_{\call_2(U,H)}
     ds  \right ) 
     $$
     $$
    \le    T^2  (c_1+ c_2)  (1+ \|x\|^2_H),
    $$
    and hence  the sequence
       $  \{
       \int_0^T
        \left |
         \int_0^t
     h^*_k P_n B(s,  \widetilde{Z}_n (s))  Q_n d
       \widetilde{W}_n (s)  
    -
            \int_0^t
      h^*_k B(s,  \widetilde{Z} (s))   d
       \widetilde{W} (s)  
       \right |    dt
        \}_{n=1}^\infty$
       is uniformly integrable on $\widetilde{\Omega}$.
       Then by \eqref{ms1 p8c}
       and the Vitali's theorem we obtain
          \be\label{ms1 p8f} 
    \lim_{n\to \infty} 
      \widetilde{\E} \left (   \int_0^T
        \left |
         \int_0^t
      h^*_k P_n B(s,  \widetilde{Z}_n (s)) Q_n  d
       \widetilde{W}_n (s)  
    -
            \int_0^t
     h^*_k B(s,  \widetilde{Z} (s))   d
       \widetilde{W} (s)  
       \right |    dt \right )  =0 .
       \ee 
       Note that
      $$
    \left |   \widetilde{\E} \left (
       \int_0^T \xi (t) \left (
   \int_0^t  
      h^*_k P_n B(s,  \widetilde{Z}_n (s)) Q_n  d
       \widetilde{W}_n (s)  
       -  \int_0^t  
      h^*_k B(s,  \widetilde{Z} (s))   d
       \widetilde{W}  (s) 
        \right ) dt
       \right ) 
       \right |
    $$
           $$
           \le \| \xi\|_{L^\infty([0,T]
           \times \widetilde{\Omega})}
     \widetilde{\E} \left (
       \int_0^T   \left |   
   \int_0^t  
       h^*_k P_n B(s,  \widetilde{Z}_n (s)) Q_n  d
       \widetilde{W}_n (s)  
       -  \int_0^t  
      h^*_k B(s,  \widetilde{Z} (s))   d
       \widetilde{W}  (s) 
       \right | dt
       \right ) ,
     $$   
    which together  with 
    \eqref{ms1 p8f} implies that
    \be\label{ms1 p8f1}
   \lim_{n\to \infty}
     \widetilde{\E} \left (
       \int_0^T \xi (t) \left (
   \int_0^t  
      h^*_k P_n B(s,  \widetilde{Z}_n (s)) Q_n  d
       \widetilde{W}_n (s)  
       -  \int_0^t  
      h^*_k B(s,  \widetilde{Z} (s))   d
       \widetilde{W}  (s) 
        \right ) dt
       \right ) 
     =0.
       \ee
    
    It follows from \eqref{ms1 p8b}
   and \eqref{ms1 p8f1}
    that for every $k\in \N$,
       $$
   \lim_{n\to \infty} 
    \widetilde{\E} \left (
       \int_0^T \xi (t)
      \left  (
      \int_0^t P_n 
      B(s,  \widetilde{Z}_n (s)) Q_n d
       \widetilde{W}_n (s), h_k\right )_H dt
       \right )
       $$
        \be\label{ms1 p8f2}
     =  \widetilde{\E} \left (
       \int_0^T \xi (t) \left (
   \int_0^t  
        B(s,  \widetilde{Z} (s))   d
       \widetilde{W} (s), h_k 
        \right )_H dt
       \right ).
     \ee
       Letting $n\to \infty$ in \eqref{ms1 p8a},
       by \eqref{ms1 p1a}-\eqref{ms1 p2} 
       and \eqref{ms1 p8f2}, we obtain 
    for every $k\in \N$,  
       $$
       \widetilde{\E} \left (
       \int_0^T \xi (t)
       (
     \widetilde{Z} (t), h_k)_H dt
     \right )
     = \widetilde{\E} \left (
       \int_0^T \xi (t)
       (   x, h_k)_H dt
       \right )
       $$
       $$
     +
      \widetilde{\E} \left (
       \int_0^T \xi (t)
      \left  (
     \int_0^t \sum_{j=1}^J  (  \widetilde{A}_j
     (s)  , h_k)_{(V^*,V)} ds
     \right  )dt \right )
     $$
  \be\label{ms1 p9}
      + \widetilde{\E} \left (
       \int_0^T \xi (t)
      \left  (
      \int_0^t  
      B(s,  \widetilde{Z} (s))  d
       \widetilde{W} (s), h_k\right )_H dt
       \right ).
     \ee
        Since  $\xi$  is an arbitrary
       function in $L^\infty([0,T]\times
       \widetilde{\Omega})$,
       by \eqref{ms1 p9} we get,
     for almost all $(t,\omega) \in [0,T] \times   \widetilde{\Omega} $,
        \be\label{ms1 p9a}
     \widetilde{Z}  (t)
     =   x
     +\int_0^t \sum_{j=1}^J  \widetilde{A} _j (s) ds
      +
      \int_0^t 
      B(s,  \widetilde{Z}  (s))   d
       \widetilde{W}  (s) \quad \text{in } \ V^*.
       \ee

        By \eqref{tig2 2} and the argument of \cite{kry1}
       we infer that
       there exists an $H$-valued continuous
       $(\widetilde{\calf}_t)$-adapted process
       $\hat{Z}$ such that
       $\hat{Z} =   \widetilde{Z}$ almost everywhere
       on $[0,T] \times   \widetilde{\Omega}$ and
       $\hat{Z} (t)$ is equal to the right-hand side of
       \eqref{ms1 p9a} for all $t\in [0,T]$,
    $ \widetilde{\p}$-almost surely.
   From now on, we identify $\hat{Z}$ with
  $ \widetilde{Z}$, and thus \eqref{ms1 p9a}
  holds   for all $t\in [0,T]$,
    $ \widetilde{\p}$-almost surely.
    This proves \eqref{limeq}.

{\bf 
Step (iii): }  Prove $\sum_{j=1}^{J} \widetilde{A}_{j}=\sum_{j=1}^{J} A_{j}(\cdot, \widetilde{Z})$  
 a.e. on $[0, T] \times \widetilde{\Omega}$.

It follows from \eqref{ms1 p4}   that
 $$
\lim _{n \rightarrow 
\infty}\left\|\widetilde{Z}_{n}
-\widetilde{Z}\right\|_{L^{1}(0, T ; H)}=0,
$$
 which along with 
 \eqref{tig2 1}-\eqref{tig2 2}
  and the Vitali theorem implies that
$$
\lim _{n \rightarrow \infty}\left\|\widetilde{Z}_{n}-\widetilde{Z}\right\|_{L^{1}([0, T] \times \widetilde{\Omega} ; H)}=0,
$$
 and hence, up to a subsequence,
 \begin{equation}\label{ms2 p7}
\lim _{n \rightarrow \infty}\left\|\widetilde{Z}_{n}(t, \omega)-\widetilde{Z}(t, \omega)\right\|_{H}=0, \quad \text { for almost all }(t, \omega) \in[0, T] \times \widetilde{\Omega} .
\end{equation}

Let $\widetilde{X}$ be an arbitrary $H$-valued continuous $(\widetilde{\mathcal{F}}_{t})$-adapted process such that
 \begin{equation}\label{ms2 p10}
\widetilde{\mathbb{E}}\left(\sup _{t \in[0, T]}\|\widetilde{X}(t)\|_{H}^{2}+\sum_{j=1}^{J} \int_{0}^{T}\|\widetilde{X}(t)\|_{V_{j}}^{q_{j}} d t\right)<\infty .  
\end{equation}
 Given $R>0$, let $\tau_{R}(\tilde{X})$ be a stopping time given by
 $$
\tau_{R}(\widetilde{X})=\inf \left\{t \geq 0:\|\widetilde{X}(t)\|_{H}+\int_{0}^{T} \sum_{j=1}^{J}\|\widetilde{X}(t)\|_{V_{j}}^{q_{j}} d t>R\right\} \wedge T,
$$
 with  convention $\inf \emptyset=+\infty$. For simplicity, we often write $\tau_{R}(\widetilde{X})$ as $\tau_{R}$ if no confusion arises.

By Itô's formula and \eqref{ode1n}  we have
$$
\widetilde{E}\left(\left\|\widetilde{Z}_{n}\left(t \wedge \tau_{R}\right)\right\|^{2}\right)-\left\|P_{n} x\right\|_{H}^{2}
$$
$$
\leq \widetilde{E}\left(2 \int_{0}^{t \wedge \tau_{R}}\left(A\left(s, \widetilde{Z}_{n}(s)\right)-A(s, \widetilde{X}(s)), \widetilde{Z}_{n}(s)-\widetilde{X}(s)\right)_{\left(V^{*}, V\right)} d s\right) 
$$
$$
+\widetilde{E}\left(\int_{0}^{t \wedge \tau_{R}}\left\|B\left(s, \widetilde{Z}_{n}(s)\right)-B(s, \widetilde{X}(s))\right\|_{\mathcal{L}_{2}(U, H)}^{2} d s\right)
$$
$$
  +\widetilde{E}\left(2 \sum_{j=1}^{J} \int_{0}^{t \wedge \tau_{R}}\left(\left(A_{j}\left(s, \widetilde{Z}_{n}(s)\right), \widetilde{X}(s)\right)_{\left(V_{j}^{*}, V_{j}\right)}+\left(A_{j}(s, \widetilde{X}(s)), \widetilde{Z_{n}}(s)-\widetilde{X}(s)\right)_{\left(V_{j}^{*}, V_{j}\right)}\right) d s\right) 
  $$
\be\label{ms2 p10a}
 +\widetilde{E}\left(\int_{0}^{t \wedge \tau_{R}}\left(2\left(B\left(s, \widetilde{Z}_{n}(s)\right), B(s, \widetilde{X}(s))\right)_{\mathcal{L}_{2}(U, H)}-\|B(s, \widetilde{X}(s))\|_{\mathcal{L}_{2}(U, H)}^{2}\right) d s\right) 
\ee

Let $\xi \in L^{\infty}([0, T] \times \widetilde{\Omega})$ be a nonnegative function. Then by 
\eqref{h2a}
and
\eqref{ms1 p1}-\eqref{ms1 p3}
, we obtain from  \eqref{ms2 p10a} that
$$
\liminf _{n \rightarrow \infty} \widetilde{E}\left(\int_{0}^{T} \xi(t)\left(\left\|\widetilde{Z}_{n}\left(t \wedge \tau_{R}\right)\right\|^{2}-\left\|P_{n} x\right\|_{H}^{2}\right) d t\right) 
$$
$$
\leq \liminf _{n \rightarrow \infty} \widetilde{E}\left(\int_{0}^{T} \xi(t)\left(\int_{0}^{t \wedge \tau_{R}}\left(g(s)+\varphi(\widetilde{X}(s))+\psi\left(\widetilde{Z}_{n}(s)\right)\right)\left\|\widetilde{Z}_{n}(s)-\widetilde{X}(s)\right\|_{H}^{2} d s\right) d t\right)
$$
$$
+\widetilde{E}\left(
2 \int_{0}^{T}(\xi(t) \sum_{j=1}^{J} \int_{0}^{t \wedge \tau_{R}}((\widetilde{A}_{j}(s), \widetilde{X}(s))_{(V_{j}^{*}, V_{j})}+(A_{j}(s, \widetilde{X}(s)), \widetilde{Z}(s)-\widetilde{X}(s))_{(V_{j}^{*}, V_{j})}) d s) d t\right) 
$$
\be\label{ms2 p10b}
+\widetilde{E}\left(\int_{0}^{T}\left(\xi(t) \int_{0}^{t \wedge \tau_{R}}\left(2(\widetilde{B}(s), B(s, \widetilde{X}(s)))_{\mathcal{L}_{2}(U, H)}-\|B(s, \widetilde{X}(s))\|_{\mathcal{L}_{2}(U, H)}^{2}\right) d s\right) d t\right) .  
\ee

By Itô's formula and \eqref{limeq}
 we have
$$
\widetilde{E}\left(\int_{0}^{T} \xi(t)\left(\left\|\widetilde{Z}\left(t \wedge \tau_{R}\right)\right\|^{2}-\|x\|_{H}^{2}\right) d t\right)
$$
\be \label{ms2 p10c}
=\widetilde{E}\left(\int_{0}^{T}\left(\xi(t) \int_{0}^{t \wedge \tau_{R}}\left(2 \sum_{j=1}^{J}\left(\widetilde{A}_{j}(s), \widetilde{Z}(s)\right)_{\left(V_{j}^{*}, V_{j}\right)}+\|\widetilde{B}(s)\|_{\mathcal{L}_{2}(U, H)}^{2}\right) d s\right) d t\right) . 
\ee

Next, we deal with the limit of  \eqref{ms2 p10b} as $R \rightarrow \infty$. Note that \eqref{ms2 p10} implies that
 \begin{equation} \label{ms2 p11}
\lim _{R \rightarrow \infty} \widetilde{\mathbb{P}}\left(\tau_{R}<T\right)=0.
\ee
 On the other hand, for every $\varepsilon>0$, we have
 $$
\begin{gathered}
dt  \times  
\widetilde{\mathbb{P}} \left((t, \omega) \in[0, T] \times \widetilde{\Omega}:\left\|\widetilde{Z}_{n}\left(t \wedge \tau_{R}\right)-\widetilde{Z}\left(t \wedge \tau_{R}\right)\right\|_{H}>\varepsilon\right) \\
\leq
 dt \times  
 \widetilde{\mathbb{P}}  \left((t, \omega) \in[0, T] \times \widetilde{\Omega}:\left\|\widetilde{Z}_{n}(t)-\widetilde{Z}(t)\right\|_{H}>\varepsilon\right)+T \widetilde{\mathbb{P}}\left(\tau_{R}<T\right),
\end{gathered}
$$
 which along with \eqref{ms2 p7}
  and \eqref{ms2 p11} implies that
 \begin{equation}\label{ms2 p12}
\lim _{n, R \rightarrow \infty}\left\|\widetilde{Z}_{n}\left(t \wedge \tau_{R}\right)-\widetilde{Z}\left(t \wedge \tau_{R}\right)\right\|_{H}=0 \quad \text { in measure in }[0, T] \times \widetilde{\Omega} .
\ee

       By  \eqref{tig2 1}-\eqref{tig2 2}  and  \eqref{ms2 p12}
  we obtain
 $$
\lim _{n, R \rightarrow \infty} \widetilde{E}\left(\int_{0}^{T}\left\|\widetilde{Z}_{n}\left(t \wedge \tau_{R}\right)-\widetilde{Z}\left(t \wedge \tau_{R}\right)\right\|_{H}^{2} d t\right)=0,
$$
 and hence for any nonnegative $\xi \in L^{\infty}([0, T] \times \widetilde{\Omega})$,
 $$
\lim _{n, R \rightarrow \infty} \widetilde{E}\left(\int_{0}^{T} \xi (t)\left(\left(\left\|\widetilde{Z}_{n}\left(t \wedge \tau_{R}\right)\right\|^{2}-\left\|P_{n} x\right\|_{H}^{2}\right)-\left(\left\|\widetilde{Z}\left(t \wedge \tau_{R}\right)\right\|^{2}-\|x\|_{H}^{2}\right)\right) d t\right)=0,
$$
 which implies that
$$
\lim _{R \rightarrow \infty} \liminf _{n \rightarrow \infty} \widetilde{E}\left(\int_{0}^{T} \xi(t)\left(\left\|\widetilde{Z}_{n}\left(t \wedge \tau_{R}\right)\right\|^{2}-\left\|P_{n} x\right\|_{H}^{2}\right) d t\right) 
$$
\be\label{ms2 p13}
=\lim _{R \rightarrow \infty} \widetilde{E}\left(\int_{0}^{T} \xi (t)\left(\left\|\widetilde{Z}\left(t \wedge \tau_{R}\right)\right\|^{2}-\|x\|_{H}^{2}\right) d t\right) .
\ee

Letting $R \rightarrow \infty$ in 
\eqref{ms2 p10b}-\eqref{ms2 p10c},
 by
  \eqref{tig2 2}-\eqref{tig2 3}
  and the Lebesgue domianted convergence thereom, 
  we obtain from \eqref{ms2 p13}
  that
$$
\widetilde{E}\left(2 \int_{0}^{T}\left(\xi(t) \sum_{j=1}^{J} \int_{0}^{t}\left(\left(\widetilde{A}_{j}(s)-A_{j}(s, \widetilde{X}(s)), \widetilde{Z}(s)-\widetilde{X}(s)\right)_{\left(V_{j}^{*}, V_{j}\right)}\right) d s\right) d t\right) 
$$
$$
+\widetilde{E}\left(\int_{0}^{T}\left(\xi(t) \int_{0}^{t}\|\widetilde{B}(s)-B(s, \widetilde{X}(s))\|_{\mathcal{L}_{2}(U, H)}^{2} d s\right) d t\right) \
$$
\be\label{ms2 p14}
\leq \lim _{R \rightarrow \infty} \liminf _{n \rightarrow \infty} \widetilde{E}\left(\int_{0}^{T} \xi(t)\left(\int_{0}^{t \wedge \tau_{R}}\left(g(s)+\varphi(\widetilde{X}(s))+\psi
(\widetilde{Z}_{n}(s)) \right)\left\|\widetilde{Z}_{n}(s)-\widetilde{X}(s)\right\|_{H}^{2} d s\right) d t\right) .
\ee
  
Next, we prove
\be\label{ms2 p15}
  \lim _{R \rightarrow \infty} \liminf _{n \rightarrow \infty} \widetilde{E}\left(\int_{0}^{T \wedge \tau_{R}}\left(|g(s)|+|\varphi(\widetilde{Z}(s))|+\left|\psi(\widetilde{Z}_{n}(s))\right|\right)\left\|\widetilde{Z}_{n}(s)-\widetilde{Z}(s)\right\|_{H}^{2} d s\right) =0,
\ee  
 for which we      first prove
\be\label{ms2 p16}
\lim _{n \rightarrow \infty} \widetilde{E}\left(\int_{0}^{T}|g(s)|\left\|\widetilde{Z}_{n}(s)-\widetilde{Z}(s)\right\|_{H}^{2} d s\right)=0.
\ee 
 By \eqref{tig2 2} and \eqref{ms1 p6}
  we find that for $\widetilde{\mathbb{P}}$-almost every $\omega$, there exists a number $c_{2}=c_{2}(\omega)>0$ such that for all $n \in \mathbb{N}$,
 \be\label{ms2 p17}
|g(s)|\left\|\widetilde{Z}_{n}(s)-\widetilde{Z}(s)\right\|_{H}^{2} \leq 2|g(s)|\left(\left\|\widetilde{Z}_{n}(s)\right\|_{H}^{2}+\|\widetilde{Z}(s)\|_{H}^{2}\right) \leq c_{2}|g(s)|, \quad   s \in[0, T] .
\ee
   Since $g \in L^{1}([0, T])$, by 
 \eqref{ms2 p7},
 \eqref{ms2 p17} and the Lebesgue dominated convergence theorem, we obtain, $\widetilde{\mathbb{P}}$-almost surely,
  \be\label{ms2 p18}
\lim _{n \rightarrow \infty} \int_{0}^{T}|g(s)|\left\|\widetilde{Z}_{n}(s)-\widetilde{Z}(s)\right\|_{H}^{2} d s=0 .
\ee

On the other hand, for every $p$ satisfying 
\eqref{main p}, by 
\eqref{tig2 1}-\eqref{tig2 2} we have
for all $n\in \N$,
$$
\widetilde{E}\left(\left(\int_{0}^{T}|g(s)|\left\|\widetilde{Z}_{n}(s)-\widetilde{Z}(s)\right\|_{H}^{2} d s\right)^{p}\right) 
$$
 \be\label{ms2 p19}
\leq 2^{2 p-1} \widetilde{E}\left(\sup _{t \in[0, T]}\left\|\widetilde{Z}_{n}\right\|_{H}^{2 p}+\sup _{t \in[0, T]}\|\widetilde{Z}\|_{H}^{2 p}\right)\left(\int_{0}^{T}|g(s)| d s\right)^{p} \le c_3, 
\ee
where $c_3= c_3 (T, p)>0$
is a constant.
  Then \eqref{ms2 p16}
   follows from  \eqref{ms2 p18}-\eqref{ms2 p19}
    and the Vitali theorem. 
    As a result of \eqref{ms2 p16},  we get
  \be\label{ms2 p20}
  \lim  _{n \rightarrow \infty} \widetilde{E}\left(\int_{0}^{T \wedge \tau_{R}}|g(s)|\left\|\widetilde{Z}_{n}(s)-\widetilde{Z}(s)\right\|_{H}^{2} d s\right)=0.
\ee

Similarly, 
 by
 \eqref{h2b},
  \eqref{tig2 2} and \eqref{ms1 p6}
  we find that for 
  every $R>0$ and $\widetilde{\mathbb{P}}$-almost every $\omega$, 
  there exists a number $c_{4}=c_{4}(R, \omega)>0$ such that for all $n \in \mathbb{N}$,
 \be\label{ms2 p20a}
 1_{\left(0, \tau_{R}\right]}(s)|\varphi(\widetilde{Z}(s))|\left\|\widetilde{Z}_{n}(s)-\widetilde{Z}(s)\right\|_{H}^{2}
   \le c_4 \sum_{j=1}^J
 (1+ \left\|\widetilde{Z}(s) \right \|_{V_j}^{q_j}),
 \quad s\in [0,T], 
\ee
  which along \eqref{tig2 2},
 \eqref{ms2 p7}
 and the Lebesgue dominated convergence theorem implies that, 
 $\widetilde{\mathbb{P}}$-almost surely,
  \be\label{ms2 p21a}
\lim _{n \rightarrow \infty} \int_{0}^{T}
1_{\left(0, \tau_{R}\right]}(s)|\varphi(\widetilde{Z}(s))|\left\|\widetilde{Z}_{n}(s)-\widetilde{Z}(s)\right\|_{H}^{2}
  d s=0 .
\ee
Furthermore,
 for every $p$ satisfying
\eqref{main p}, by \eqref{h2b}
and \eqref{tig2 1}-\eqref{tig2 2}
we have for all $n\in \N$,
 $$
  \widetilde{E}\left(\left(\int_{0}^{T} 1_{\left(0, \tau_{R}\right]}(s)|\varphi(\widetilde{Z}(s))|\left\|\widetilde{Z}_{n}(s)-\widetilde{Z}(s)\right\|_{H}^{2} d s\right)^{p}\right) 
  $$
    \be\label{ms2 p21}
  \leq c_{5} \widetilde{E}\left(\sup _{s \in[0, T]}\left\|\widetilde{Z}_{n}(s)\right\|_{H}^{2 p}+\sup _{s \in[0, T]}\left\|\widetilde{Z}(s)\right\|_{H}^{2 p}\right) \le  c_6 .
  \ee
 where $c_{6}=c_{6}(R, T, p)>0$.
  By \eqref{ms2 p21a}-\eqref{ms2 p21} and the Vitali theorem, we obtain
   \be\label{ms2 p22}
  \lim  _{n \rightarrow \infty} \widetilde{E}\left(\int_{0}^{T \wedge \tau_{R}}|\varphi(\widetilde{Z}(s))|\left\|\widetilde{Z}_{n}(s)-\widetilde{Z}(s)\right\|_{H}^{2} d s\right)=0 .
\ee
 By\eqref{h2c}  we have
$$
\widetilde{E}\left(\int_{0}^{T \wedge \tau_{R}}\left|\psi(\widetilde{Z}_{n}(s))\right|\left\|\widetilde{Z}_{n}(s)-\widetilde{Z}(s)\right\|_{H}^{2} d s\right) 
$$
$$
\leq \alpha_{1} \widetilde{E}\left(\int_{0}^{T \wedge \tau_{R}}\left(1+\left\|\widetilde{Z}_{n}(s)\right\|_{H}^{\alpha}\right)\left\|\widetilde{Z}_{n}(s)-\widetilde{Z}(s)\right\|_{H}^{2} d s\right) 
$$
$$
+\alpha_{1} \widetilde{E}\left(\sum_{j=1}^{J} \int_{0}^{T \wedge \tau_{R}}\left\|\widetilde{Z}_{n}(s)\right\|_{V_{j}}^{\theta_{j}}\left\|\widetilde{Z}_{n}(s)-\widetilde{Z}(s)\right\|_{H}^{2} d s\right)
$$
\be\label{ms2 p23}
+\alpha_{1} \widetilde{E}\left(\sum_{j=1}^{J} \int_{0}^{T \wedge \tau_{R}}\left\|\widetilde{Z}_{n}(s)\right\|_{V_{j}}^{\theta_{j}}\left\|\widetilde{Z}_{n}(s)\right\|_{H}^{\beta_{2, j}}\left\|\widetilde{Z}_{n}(s)-\widetilde{Z}(s)\right\|_{H}^{2} d s\right) . 
\ee

      Next, we prove every term on the right-hand side of (2.76) converges to zero as $n \rightarrow \infty$. 
    By   \eqref{h5b}  we have
   $$
\frac{1}{2}+\frac{1}{2} \max _{1 \leq j \leq J} \kappa_{j}<\frac{1}{2}+\min _{1 \leq j \leq J} \gamma_{1, j} \gamma_{2, j}^{-1},
$$
 which implies that there exists a number $p$ such that
\be\label{ms2 p24}
\frac{1}{2}+\frac{1}{2} \max _{1 \leq j \leq J} \kappa_{j}<p<\frac{1}{2}+\min _{1 \leq j \leq J} \gamma_{1, j} \gamma_{2, j}^{-1} .
\ee
By \eqref{h5c}  and \eqref{ms2 p24} we have
\be\label{ms2 p25}
p> 1+ {\frac 12} \alpha
\quad\text{and}\quad
p> 1+ {\frac 12} \beta_{2,j}
+ \theta_{j } q_j^{-1},
\quad \forall \
j=1,\cdots, J.
\ee
Let $q={\frac {2p}{2+\alpha}}$.
Then by \eqref{ms2 p25} we see 
$q>1$. By
\eqref{tig2 1}-\eqref{tig2 2} we get
for all $n\in \N$,
 $$ 
   \widetilde{E}\left(\int_{0}^{T  }
   \left (
   \left(1+\left\|\widetilde{Z}_{n}(s)\right\|_{H}^{\alpha}\right)\left\|\widetilde{Z}_{n}(s)-\widetilde{Z}(s)\right\|_{H}^{2}  
   \right )^q  ds \right) 
 $$
\be\label{ms2 p26}
\le c_7
\left (
1+ 
  \widetilde{E}\left( 
  \sup_{s\in [0,T]}
  \left\|\widetilde{Z}_{n}(s)
  \right\|_{H}^{2p}
  +
   \sup_{s\in [0,T]}
  \left\|\widetilde{Z}(s)
  \right\|_{H}^{2p}
  \right)
  \right )
  \le c_8,
 \ee
  where $c_8=c_8 (T,p)>0$ is a constant.
  
  It follows from \eqref{ms2 p7},
  \eqref{ms2 p26} and the
  Vitali theorem that
 \be\label{ms2 p27}   
 \lim _{n\to \infty} \widetilde{E}\left(\int_{0}^{T\wedge \tau_R   }
  \left(1+\left\|\widetilde{Z}_{n}(s)\right\|_{H}^{\alpha}\right)\left\|\widetilde{Z}_{n}(s)-\widetilde{Z}(s)\right\|_{H}^{2}  
   ds \right) 
=0.
\ee 
    For the second term
   on the right-hand side of
   \eqref{ms2 p23}, by \eqref{tig2 1}  we have
   for every $j=1,\cdots, J$,
    $$  \widetilde{E}\left(   \int_{0}^{T  }\left\|\widetilde{Z}_{n}(s)\right\|_{V_{j}}^{\theta_{j}}\left\|\widetilde{Z}_{n}(s)-\widetilde{Z}(s)\right\|_{H}^{2} d s\right)
   $$
   $$
   \le
    \widetilde{E}\left(
    \left (   \int_{0}^{T  }\left\|\widetilde{Z}_{n}(s)
    \right\|_{V_{j}}^{q_{j}}
    ds \right )^p
    \right )^{\frac {\theta_j }
    	{pq_j }}
    \left (  \widetilde{E}\left(
      \left (  \int_{0}^{T  }
    \left\|\widetilde{Z}_{n}(s)-\widetilde{Z}(s)\right\|_{H}^
    {\frac  {2q_j}{q_j- \theta_j }}
    	ds
     \right  )^{
    	\frac {p(q_j-\theta_j )}{
    		pq_j -\theta_j }}\right )
   \right )  ^{
     	\frac {pq_j-\theta_j }{
     			pq_j  }}
   $$
   \be\label{ms2 p28}
   \le c_9
    \left (  \widetilde{E}\left(
   \left (  \int_{0}^{T  }
   \left\|\widetilde{Z}_{n}(s)-\widetilde{Z}(s)\right\|_{H}^
   {\frac  {2q_j}{q_j- \theta_j }}
   ds
   \right  )^{
   	\frac {p(q_j-\theta_j )}{
   		pq_j -\theta_j }}\right )
   \right )  ^{
   	\frac {pq_j-\theta_j }{
   		pq_j  }},
   	\ee 
   	where $c_9=c_9 (T, p)>0$.
  Similar to
   \eqref{ms2 p18},
   by 
   \eqref{tig2 2},
   \eqref{ms1 p6} and \eqref{ms2 p7}
   and the Lebesgue dominated
   convergence theorem, we  get,
   $\widetilde{\p}$-almost surely,
    \be\label{ms2 p29}
    \lim_{n\to \infty}
      \int_{0}^{T  }
   \left\|\widetilde{Z}_{n}(s)-\widetilde{Z}(s)\right\|_{H}^
   {\frac  {2q_j}{q_j- \theta_j }}
   ds
    =0.
   \ee 
   Let $q=p-    \theta_j q_j^{-1} $.
   Then by
     \eqref{ms2 p25} we have
     $q>1$.
     Moreover,
     by \eqref{tig2 1}-\eqref{tig2 2}
     we get  for all $n\in \N$,
     $$
      \widetilde{E}\left(
   \left ( 
   \left ( 
    \int_{0}^{T  }
   \left\|\widetilde{Z}_{n}(s)-\widetilde{Z}(s)\right\|_{H}^
   {\frac  {2q_j}{q_j- \theta_j }}
   ds
   \right  )^{
   	\frac {p(q_j-\theta_j )}{
   		pq_j -\theta_j }}\right )^q
   \right ) 
     $$
     $$
     \le
     2^{\frac
     {2pqq_j}
     {pq_j-\theta_j}
     }
     T^{\frac
     {pq(q_j -\theta_j)}
     {pq_j -\theta_j}
     } \widetilde{E}
      \left (\sup_{t\in [0,T]}
      \left\|\widetilde{Z}_{n}(s)
      \right \|_H^{\frac
     {2pqq_j}
     {pq_j-\theta_j}
     }
      +\sup_{t\in [0,T]} \left \|   \widetilde{Z}(s)\right\|_{H}^{\frac
     {2pqq_j}
     {pq_j-\theta_j}
     }
     \right )
     $$
       \be\label{ms2 p30}
     =
     2^{2p}
   T^{p(1-\theta_j q_j^{-1})  }
      \widetilde{E}
      \left (\sup_{t\in [0,T]}
      \left\|\widetilde{Z}_{n}(s)
      \right \|_H^{2p
     }
      +\sup_{t\in [0,T]} \left \|   \widetilde{Z}(s)\right\|_{H}^{2p
     }
     \right )
     \le c_{10},
 \ee
     where $c_{10}=c_{10} (T, p)>0$
     is a constant.
     By \eqref{ms2 p29}-\eqref{ms2 p30}
     and the  Vitali theorem we obtain
     $$
      \lim_{n\to \infty}
      \widetilde{E}\left(
   \left (  
    \int_{0}^{T  }
   \left\|\widetilde{Z}_{n}(s)-\widetilde{Z}(s)\right\|_{H}^
   {\frac  {2q_j}{q_j- \theta_j }}
   ds
   \right  )^{
   	\frac {p(q_j-\theta_j )}{
   		pq_j -\theta_j }}\right ) 
   		=0,
     $$
     which along with
     \eqref{ms2 p28}
     implies that for every $j=1,\cdots, J$,
        \be\label{ms2 p31}  
        \lim_{n\to \infty}
         \widetilde{E}\left(  \int_{0}^{T \wedge \tau_{R}}\left\|\widetilde{Z}_{n}(s)\right\|_{V_{j}}^{\theta_{j}}\left\|\widetilde{Z}_{n}(s)-\widetilde{Z}(s)\right\|_{H}^{2} d s\right)
         =0.
         \ee

    For the last  term
   on the right-hand side of
   \eqref{ms2 p23}, 
  as for \eqref{ms2 p28}, by \eqref{tig2 1}  we have
   for every $j=1,\cdots, J$,
    $$  \widetilde{E}\left(   \int_{0}^{T  }\left\|\widetilde{Z}_{n}(s)\right\|_{V_{j}}^{\theta_{j}}
      \left\|\widetilde{Z}_{n}(s)\right\|_{H}^{\beta_{2, j}}
    \left\|\widetilde{Z}_{n}(s)-\widetilde{Z}(s)\right\|_{H}^{2} d s\right)
   $$
   \be\label{ms2 p32}
   \le c_{11}
    \left (  \widetilde{E}\left(
   \left (  \int_{0}^{T  }
   \left (
     \left\|\widetilde{Z}_{n}(s)\right\|_{H}^{\beta_{2, j}}
   \left\|\widetilde{Z}_{n}(s)-\widetilde{Z}(s)\right\|_{H}^2
   \right )^
   {\frac  {q_j}{q_j- \theta_j }}
   ds
   \right  )^{
   	\frac {p(q_j-\theta_j )}{
   		pq_j -\theta_j }}\right )
   \right )  ^{
   	\frac {pq_j-\theta_j }{
   		pq_j  }},
   	\ee 
   	where $c_{11}=c_{11} (T, p)>0$.
 As for 
   \eqref{ms2 p29}, by
   \eqref{tig2 2},
   \eqref{ms1 p6},
   \eqref{ms2 p7}
   and the Lebesgue dominated
   convergence theorem, we  get,
   $\widetilde{\p}$-almost surely,
    \be\label{ms2 p33}
    \lim_{n\to \infty}
      \int_{0}^{T  }
      \left ( \left\|\widetilde{Z}_{n}(s)
      \right \|_H^{\beta_{2,j}}
   \left\|\widetilde{Z}_{n}(s)-\widetilde{Z}(s)\right\|_{H}^2
   \right )^
   {\frac  {q_j}{q_j- \theta_j }}
   ds
    =0.
   \ee 
   Let $q={\frac {2(pq_j-\theta_j)}
   {q_j (2+\beta_{2,j})}
   } $.
   Then by
     \eqref{ms2 p25} we have
     $q>1$.
     Moreover,
     by \eqref{tig2 1}-\eqref{tig2 2}
     and Young's inequality, 
     we get  for all $n\in \N$,
     $$
      \widetilde{E}\left(
   \left ( 
   \left ( 
    \int_{0}^{T  }
    \left ( \left\|\widetilde{Z}_{n}(s)
      \right \|_H^{\beta_{2,j}}
   \left\|\widetilde{Z}_{n}(s)-\widetilde{Z}(s)\right\|_{H}^2
   \right )^
   {\frac  {q_j}{q_j- \theta_j }}
   ds
   \right  )^{
   	\frac {p(q_j-\theta_j )}{
   		pq_j -\theta_j }}\right )^q
   \right ) 
     $$
     $$
     \le
     2^{\frac
     {2pqq_j}
     {pq_j-\theta_j}
     }
     T^{\frac
     {pq(q_j -\theta_j)}
     {pq_j -\theta_j}
     } \widetilde{E}
        \left ( \sup_{t\in [0,T]}
      \left\|\widetilde{Z}_{n}(s)
      \right \|_H^{\frac
     {\beta_{2,j}pqq_j}
     {pq_j-\theta_j}}
      \left (
       \sup_{t\in [0,T]}
      \left\|\widetilde{Z}_{n}(s)
      \right \|_H^{\frac
     {2pqq_j}
     {pq_j-\theta_j}
     }
      +\sup_{t\in [0,T]} \left \|   \widetilde{Z}(s)\right\|_{H}^{\frac
     {2pqq_j}
     {pq_j-\theta_j}
     }  \right ) 
     \right )
     $$
       \be\label{ms2 p34}
    \le c_{12}
      \widetilde{E}
      \left (\sup_{t\in [0,T]}
      \left\|\widetilde{Z}_{n}(s)
      \right \|_H^{2p
     }
      +\sup_{t\in [0,T]} \left \|   \widetilde{Z}(s)\right\|_{H}^{2p
     }
     \right )
     \le c_{13},
 \ee
     where $c_{13}=c_{13} (T, p)>0$
     is a constant.
     By \eqref{ms2 p33}-\eqref{ms2 p34}
     and the  Vitali theorem we obtain
     $$
      \lim_{n\to \infty}
      \widetilde{E}\left(
   \left (  
    \int_{0}^{T  }
    \left (
    \left\|\widetilde{Z}_{n}(s)
      \right \|_H^{\beta_{2,j}}
   \left\|\widetilde{Z}_{n}(s)-\widetilde{Z}(s)\right\|_{H}^2
   \right )^
   {\frac  {q_j}{q_j- \theta_j }}
   ds
   \right  )^{
   	\frac {p(q_j-\theta_j )}{
   		pq_j -\theta_j }}\right ) 
   		=0,
     $$
     which along with
     \eqref{ms2 p32}
     implies that for every $j=1,\cdots, J$,
        \be\label{ms2 p35}  
        \lim_{n\to \infty}
         \widetilde{E}\left(  \int_{0}^
         {T \wedge \tau_{R}}
         \left\|\widetilde{Z}_{n}(s)\right\|_{V_{j}}^{\theta_{j}
         }
         \left\|\widetilde{Z}_{n}(s)
      \right \|_H^{\beta_{2,j}}
         \left\|\widetilde{Z}_{n}(s)-\widetilde{Z}(s)\right\|_{H}^{2} d s\right)
         =0.
         \ee
   
   By \eqref{ms2 p23},
   \eqref{ms2 p27},
      \eqref{ms2 p31} and
         \eqref{ms2 p35}, we obtain
               \be\label{ms2 p40}  
        \lim_{n\to \infty}
         \widetilde{E}\left(  \int_{0}^
         {T \wedge \tau_{R}}
         \left | \psi (\widetilde{Z}_{n}(s) )
         \right  |
         \left\|\widetilde{Z}_{n}(s)-\widetilde{Z}(s)\right\|_{H}^{2} d s\right)
         =0.
         \ee
 By \eqref{ms2 p20},
 \eqref{ms2 p22}
 and \eqref{ms2 p40},  we get
$$
  \lim _{n \rightarrow \infty} \widetilde{E}\left(\int_{0}^{T \wedge \tau_{R}}\left(|g(s)|+|\varphi(\widetilde{Z}(s))|+\left|\psi(\widetilde{Z}_{n}(s))\right|\right)\left\|\widetilde{Z}_{n}(s)-\widetilde{Z}(s)\right\|_{H}^{2} d s\right) =0,
$$
 which proves \eqref{ms2 p15}.

   Given $\delta\in (0,1)$
   and $v\in V$, let
   $\widetilde{X}
   =\widetilde{Z} -\delta v$. Then by
   the argument of  \eqref{ms2 p14}
   we get,
    for any nonnegative 
 $ \xi \in {L^{\infty}([0, T] \times 
 \widetilde{\Omega})}$,
$$ 2
\widetilde{E}\left(  \int_{0}^{T}\left(\xi(t) \sum_{j=1}^{J} \int_{0}^{t} \left(\widetilde{A}_{j}(s)-A_{j}(s, \widetilde{Z}(s) -\delta v ),
 \delta v  \right)_{\left(V_{j}^{*}, V_{j}\right)}  d s\right) d t\right) 
$$
 \be\label{ms2 p42}
\leq \lim _{R \rightarrow \infty} \liminf _{n \rightarrow \infty} \widetilde{E}\left(\int_{0}^{T} \xi(t)\left(\int_{0}^{t \wedge \tau_{R}}
(g(s)+\varphi(\widetilde{Z}(s)
-\delta v )+\psi
(\widetilde{Z}_{n}(s)) 
)
 \|\widetilde{Z}_{n}(s)-\widetilde{Z}(s) +\delta v  \|_{H}^{2} d s\right) d t\right) ,
\ee
   where
   $\tau_R =\tau_R(\widetilde{Z})$.
   For the  right-hand side of \eqref{ms2 p42}
   we have
  $$
   \widetilde{E}\left(\int_{0}^{T} \xi(t)\left(\int_{0}^{t \wedge \tau_{R}}
(g(s)+\varphi(\widetilde{Z}(s)
-\delta v )+\psi
(\widetilde{Z}_{n}(s)) 
)
 \|\widetilde{Z}_{n}(s)-\widetilde{Z}(s) +\delta v  \|_{H}^{2} d s\right) d t\right)
   $$
   $$
   \le
  c_{14}
   \widetilde{E} 
   \left(\int_{0}^{T \wedge \tau_{R}}
 \left | g(s)+\varphi(\widetilde{Z}(s)
-\delta v )+\psi
(\widetilde{Z}_{n}(s)) 
\right | 
 \|\widetilde{Z}_{n}(s)-\widetilde{Z}(s) 
  \|_{H}^{2} d s \right)
      $$
     \be\label{ms2 p43}
      + 
  c_{14}\delta^2 \|v \|_H^2
   \widetilde{E} 
   \left(\int_{0}^{T }
\left | g(s)+\varphi(\widetilde{Z}(s)
-\delta v )+\psi
(\widetilde{Z}_{n}(s)) 
 \right |
   d s \right),
   \ee
 where    $ c_{14}=
  2T\|\xi\|_{L^{\infty}([0, T] \times 
 \widetilde{\Omega})}$.
Similar to \eqref{ms2 p15},
the first term on the right-hand side of
\eqref{ms2 p43}
converges to zero as $n\to \infty$.
On the other hand, by \eqref{h2b}-\eqref{h2c}
and \eqref{tig2 1}-\eqref{tig2 2} we find that
there exists a constant
$c_{15}=
c_{15} (T, p, v )>0$ such that
for all $\delta\in (0,1)$  and $n\in \N$,
      \be\label{ms2 p44}
  \widetilde{E} 
   \left(\int_{0}^{T }
| g(s)+\varphi(\widetilde{Z}(s)
-\delta v )+\psi
(\widetilde{Z}_{n}(s)) 
| 
   d s \right)
   \le c_{15}.
\ee
By \eqref{ms2 p42}-\eqref{ms2 p44}
   we get,
    for any nonnegative 
 $ \xi \in {L^{\infty}([0, T] \times 
 \widetilde{\Omega})}$,
 $\delta\in (0,1)$ and $v\in V$,
$$ 2
\widetilde{E}\left(  \int_{0}^{T}\left(\xi(t) \sum_{j=1}^{J} \int_{0}^{t} \left(\widetilde{A}_{j}(s)-A_{j}(s, \widetilde{Z}(s) -\delta v ),
 \delta v  \right)_{\left(V_{j}^{*}, V_{j}\right)}  d s\right) d t\right) 
 \le c_{14} c_{15} \delta^2 \| v\|_H^2.
$$
   First diving both sides of
   the above inequality by $\delta$,
    and then   
      taking the limit as $\delta \to  0$,
      by ({\bf H1})
      we obtain,
        for any nonnegative 
 $ \xi \in {L^{\infty}([0, T] \times 
 \widetilde{\Omega})}$ and $v\in V$, 
\be\label{ms2 p50}
\widetilde{E}\left(  \int_{0}^{T}\left(\xi(t) \sum_{j=1}^{J} \int_{0}^{t} \left(\widetilde{A}_{j}(s)-A_{j}(s, \widetilde{Z}(s)   ),
  v  \right)_{\left(V_{j}^{*}, V_{j}\right)}  d s\right) d t\right) 
 \le 0  .
\ee
Replacing $v$ by $-v$,  we get from
\eqref{ms2 p50} that  for any nonnegative 
      $ \xi \in {L^{\infty}([0, T] \times 
 \widetilde{\Omega})}$ and $v\in V$, 
$$
\widetilde{E}\left(  \int_{0}^{T}\left(\xi(t) \sum_{j=1}^{J} \int_{0}^{t} \left(\widetilde{A}_{j}(s)-A_{j}(s, \widetilde{Z}(s)   ),
  v  \right)_{\left(V_{j}^{*}, V_{j}\right)}  d s\right) d t\right) 
= 0  ,
$$ 
which implies that
$
 \sum_{j=1}^J\widetilde{A}_{j}
 =\sum_{j=1}^J
  A_{j}(\cdot, \widetilde{Z}   )$
  a.e. on $[0,T]\times \widetilde{\Omega}$.
  
  By Steps (ii) and (iii), we see that
   $\left(\widetilde{\Omega}, \widetilde{\mathcal{F}},(\widetilde{\mathcal{F}}_{t})_{t \in[0, T]}, \widetilde{\mathbb{P}}, \widetilde{Z}, \widetilde{W}\right)$ is a martingale
   solution of \eqref{sde1}-\eqref{sde2}.
   Moreover,
   by the standard argument
   (see,  e.g., \cite{roc1}), one can verify 
   that the pathwise uniqueness   holds
   for the solutions of 
      \eqref{sde1}-\eqref{sde2}
      under ({\bf H1})-({\bf H5}),
      which along with the 
      Yamada-Watanabe
 theorem implies the existence and
 uniqueness of  strong probabilistic solutions
 as stated in Theorem \ref{main}.

      \begin{rem}\label{2.8a}
      The  pathwise uniform estimate
      \eqref{ms1 p6} is crucial
      for proving the convergence of 
      approximate solutions,
       and frequently used 
       for verifying the uniform
       integrability of
       a sequence of functions
       involving $\{ \widetilde{Z}_n\}_{n=1}^\infty$,
       see,  e.g.,
       the proof of
       \eqref{ms2 p17},
       \eqref{ms2 p20a},
       \eqref{ms2 p29}
       and \eqref{ms2 p33}.
        Without \eqref{ms1 p6},   it is
      difficult to  establish
      the 
      uniform  integrability of those sequences.
  To  obtain the  pathwise   uniform estimate
      \eqref{ms1 p6}, we  have to   prove the
      tightness of the  sequence
      $\{{Z}_n\}_{n=1}^\infty$
      in $L^\infty_{w^*} (0,T; H)$ 
       and then employ the 
       Skorokhod-Jakubowski
       representation theorem 
       given by  Proposition \ref{prop_sj}
       in a topological space  
      rather   a metric space. 
       The classical Skorokhod
       representation theorem
       in a metric space  does not apply
       to   $L^\infty_{w^*} (0,T;  H)$.
       \end{rem}

\section{Well-posedness of   fractional 
$p$-Laplace   equations}
\setcounter{equation}{0}

In this section, we apply Theorem \ref{main}
to investigate the existence 
and uniqueness of  
solutions of the fractional
 stochastic $p$-Laplace
  equation
\eqref{lap1}-\eqref{lap3}
defined in a bounded domain
$\o$ in $\R^n$
driven by superlinear transport  noise.

We start with the definition
of the fractional $p$-Laplace operator.
Given $s\in (0,1)$  and $2\leq p<\infty$,
the operator
$ (-\Delta )^s_p$ is defined by,
  for $x\in\mathbb{R}^n$,
$$  (-\Delta)_{p}^su(x)
= 
C(n,p,s)\ \mbox{P.V.}
\int_{\mathbb{R}^n}\frac{|u(x)-u(y)|^{p-2}\big(u(x)-u(y)\big)}{|x-y|^{n+ps}}dy,
$$ 
 provided the limit exists, where P.V. indicates the principal value of the integral, and
   $C(n,p,s)$ is a positive number
   depending on $n$, $p$ and $s$
     given by
\be\label{constpl}
C(n,p,s)=\frac{s4^s\Gamma\big(\frac{ps+p+n-2}{2}\big)}{\pi^\frac{n}{2}\Gamma(1-s)},
\ee
with 
   $\Gamma$
   being the   Gamma function.

  The fractional Sobolev space $W^{s,p}(\mathbb{R}^n)$  is given by 
$$
W^{s,p}(\mathbb{R}^n)= 
\bigg\{u\in L^p(\mathbb{R}^n):\int_{\R^n}
\int_{\R^n}\frac{|u(x)-u(y)|^p}{|x-y|^{n+ps}}dxdy<\infty  \bigg\},
$$
with  norm
$$
\|u\|_{W^{s,p}} =\bigg(\int_{\R^n}|u(x)|^pdx+\int_{\R^n}\int_{\R^n}\frac{|u(x)-u(y)|^p}{|x-y|^{n+ps}}dxdy \bigg)^\frac{1}{p},\ \ u\in W^{s,p}(\mathbb{R}^n).
$$
The  Gagliardo semi-norm of $W^{s,p}(\mathbb{R}^n)$  is given by
 $$
 [u]_{{W}^{s,p}}
 =\bigg(\int_{\R^n}\int_{\R^n}\frac{|u(x)-u(y)|^p}{|x-y|^{n+ps}}dxdy \bigg)^\frac{1}{p},\ \ u\in W^{s,p}(\R^n).
 $$

From now on,   we  denote by
$H= \{ u\in 
  L^2(\R^n):  u=0 \ \text{a.e.  on }  \R^n \setminus \o
  \}$ with norm
  $\| \cdot \|_H$ and inner product
  $(\cdot, \cdot )_H$,
 $V_1=   \{ u\in 
  W^{s,p} (\R^n):  u=0 \ \text{a.e.  on }  \R^n \setminus \o
  \}$ with $p\ge 2$, 
  $V_2 =  \{ u\in 
  L^q(\R^n):  u=0 \ \text{a.e.  on }  \R^n \setminus \o
  \}$
  with $q\ge 2$ and   $V_3=H$.
  Let
  $V= V_1 \bigcap V_2 \bigcap V_3=V_1 \bigcap V_2 $.
Then we  have the embeddings
$V \subseteq  H  \equiv  H^*  \subseteq  V^*$.
Note that the embedding 
   $V \subseteq  H$   is compact
   since $\o$ is bounded in $\R^n$.
   
   Recall Poincare's inequality on $V_1$:
   there exists a constant $\lambda 
   =\lambda(s, p, n,
   \o)>0$ such that
\be\label{pi}
   [u]^p_{W^{s,p}
   (\R^n)} \ge \lambda   \| u \|^p_{L^p(\R^n)},
   \quad \forall \  u \in V_1,
\ee
 which shows that
  $ [\cdot]_{W^{s,p}
   (\R^n)}$ is also a norm on $V_1$
   and equivalent to
  $ \| \cdot \|_{W^{s,p}
   (\R^n)}$.
   In the sequel, we consider 
   $ [\cdot]_{W^{s,p}
   (\R^n)}$ as the norm of $V_1$ and
   write
   $\| u \|_{V_1}
   =   [u]_{W^{s,p}
   (\R^n)}$ for  $u\in V_1$.

   We first consider
   the existence and uniqueness of
   solutions of 
   \eqref{lap1}-\eqref{lap3}
   when $f$ is a general monotone function,
    and  then  improve the result  when
   $f$ satisfies a stronger monotonicity condition.

\subsection{Fractional 
$p$-Laplace   equations with a general
monotonicity  drift}

In this subsection, we prove the existence
and uniqueness of solutions of \eqref{lap1}-\eqref{lap3} when the nonlinearity  $f$ 
is monotone and dissipative.
More precisely, we now   assume
 $f: \R \times \R^n  \times \R
 \to \R$ is  continuous   such that
 for all $t, u, u_1, u_2 \in \R$ and $x\in \R^n$,
 $f(t,x, 0) =0$ and
  \be\label{f1}
 (f(t,x, u_1) - f(t,x, u_2))(u_1-u_2)
 \le 0,
\ee
 \be\label{f2}
 f(t,x, u) u \ \le -  \delta_1 |u|^q +  \varphi_1 (t,x),
 \ee
  \be\label{f3}
 | f(t,x, u) |  \ \le \delta_2  |u|^{q-1} +  \varphi_2 (t,x),
 \ee
 where $\delta_1>0$,
 $\delta_2>0$ and $q \ge 2$ are constants,
   $\varphi_1\in L^1([0,T] \times \o) $, and 
  $\varphi_2 \in L^{\frac q{q-1}}
 ([0,T] \times \o)$.

 For the nonlinear term 
 $h$, we assume   
 $h: \R \times \R^n  \times \R
 \to \R$ is   continuous   such that
 for all $t, u, u_1, u_2 \in \R$ and $x\in \R^n$,
 $h(t,x, 0) =0$ and 
 \be\label{h1}
 |h(t,x, u_1) -  h(t,x, u_2)|
 \le \varphi_3 (t,x)
 |u_1-u_2|  ,
 \ee
where $\varphi_3\in L^1(0,T; L^\infty(\o))$
 for every $T>0$.

For every $i\in \N$, 
let
        $\sigma_i  :
      \R \times \R^n \times \R
      \to \R$  be a mapping given  by   
  \be\label{sig1}
     \sigma_i  (t,x, u)
     = \sigma_{1,i}   (t,x)
     +   
         \sigma_{2, i } ( u)  
        , 
       \quad \forall \ t\in \R, \ x\in \R^n, \ u\in \R,
\ee
     where      
   $\sigma_{1,i}: [0,T]\to H$ for every $T>0$
    such that
   \be\label{sig2}
   \sum_{i=1}^\infty  \| \sigma_{1,i}\|^2
   _ {L^2(0,T; H )}
   <\infty.
   \ee 
   For every $i\in \N$,
    assume   
    $\sigma_{2,i}: \R
    \to \R$ is continuous such that
    $\sigma_{2,i} (0) =0$ and 
  there exist  positive numbers $\beta_i$ and $\gamma_i$
   such that   for all 
    $u, u_1, u_2   \in \R$,
    \be\label{sig3}
   |\sigma_{2,i}  (u)  |^2
   \le \gamma_i    +  \beta_i  |u|^{p_1}  ,
   \ee
    \be\label{sig4}
| \sigma_{2,i}  (u_1) 
-
\sigma_{2,i}  (u_2) |^2
\le \gamma_i
(1+ |u_1|^{p_1-2}
+|u_2|^{p_1-2})
|u_1-u_2|^2 , 
 \ee
    where $p_1 \in [2, p]$ and
  \be\label{sig5}
  \sum_{i=1}^\infty  ( 
    \beta_i  + \gamma_i  ) <\infty .
     \ee

Note that  
     $   \sigma_{2,i}$
   has     superlinear growth
   when $p_1>2$. 
  Given $v\in L^{p_1}
  (\R^n)
  \bigcap H$  and $t\in \R$, define an operator
  $B (t, v): l^2\to H$ by
\be\label{sig6}
  B (t,v) (u) (x)
  =\sum_{i=1}^\infty
  \sigma_i (t,x, v(x)) u_i
   \quad \forall \ 
   u=\{u_i\}_{i=1}^\infty \in l^2,
   \ \ x\in \R^n.
\ee
It follows from 
  \eqref{sig1}-\eqref{sig3}
  and \eqref{sig5}-\eqref{sig6}    that 
  $B (t, v): l^2\to H$ is a   Hilbert-Schmidt 
  operator 
   with norm
 $$
 \|  B (t,v)   \|^2_{\call_2 (l^2, H)}
 =
 \sum_{i=1}^\infty
 \|  \sigma_i (t,\cdot, v ) \|^2_H
  $$ 
   $$
   \le 2 
 \sum_{i=1}^\infty
  \int_{\o} \left (
   |  \sigma_{1,i}  (t,x)|^2 
     +      | \sigma_{2,i} ( v(x) ) 
       |^2 \right ) dx
  $$
 $$
  \le
  2 \sum_{i=1}^\infty 
  \|\sigma_{1, i} (t) \|^2_ {H}
  + 2 |\o|
  \sum_{i=1}^\infty \gamma_i 
  +2\sum_{i=1}^\infty
  \beta_i\int_{\o}  |v(x)|^{p_1} dx
 $$
  \be\label{sig7}
  \le
  2 \sum_{i=1}^\infty 
  \|\sigma_{1, i} (t) \|^2_ {H}
  + 2 |\o|
  \sum_{i=1}^\infty \gamma_i 
  +2|\o|^{\frac {p-p_1}p} \sum_{i=1}^\infty
  \beta_i 
  \left (\int_{\o}  |v(x)|^{p} dx
  \right )^{\frac {p_1}p} 
\ee
 where $|\o|$ is the volume of $\o$.

Similarly, by   \eqref{sig1} and   
   \eqref{sig4}-\eqref{sig6}    we have, 
     for all 
 $u_1,u_2\in
L^{p_1}(\mathbb{R}^n)\bigcap H$,
$$
 \|B (t,  u_1)-
B (t,  u_2)\|^2_{\call_2(l^2,  H)}
 = \sum_{i=1}^\infty 
 \int_{\o} |\sigma_{2,i} (
 u_1(x))-\sigma_{2,i}(u_2(x))|^2dx
$$
  \be\label{sig8}
\leq   \sum_{i=1}^\infty \gamma_i 
  \int_{\o} 
   \big( 1+ |u_1(x)|^{p_1-2}  
+|u_2(x)|^{p_1-2}
\big)|u_1(x)-u_2(x)|^2dx
\ee

Let    $A_1: V_1 \to V_1^*$ be
the operator given by:
for every    $v, u\in V_1$, 
\be\label{cA1a}
 (A_1v, \  u)_{(V_1^*, V_1)}
 =
  -{\frac 12}  
C(n,p,s)\  
\int_{\R^n }
\int_{\R^n}
\frac{|v(x)-v(y)|^{p-2}\big(v(x)-v(y)\big)
\big(u(x)-u(y)\big)
}
{|x-y|^{n+ps}}dx dy.
\ee
Note that
\be\label{cA1}
A_1:  V_1 \to V_1^*
\ \text{  is hemicontinuous}.
\ee
 Let  $A_2(t) $ be 
 the Nemytskii
 operator associated with $f(t, \cdot, \cdot)$;
 that is, 
 $A_2(t, v) (x)  = f(t, x, v(x))$ for all
 $v\in V_2$ and $x\in \R^n$.
Then by   \eqref{f3}  we see  that for $t\in [0,T]$,
\be\label{cA2}
A_2(t, \cdot) :
V_2 \to V_2^* \ \text{ is continuous}.
\ee
Let  $A_3(t) $ be the Nemytskii
 operator associated with $h(t, \cdot, \cdot)$
 on $V_3$. 
Then by   \eqref{h1}  we see  that for $t\in [0,T]$,
\be\label{cA3}
A_3(t, \cdot) :
V_3 \to V_3^* \ \text{ is continuous}.
\ee
Denote by 
 $A(t)=A_1 +A_2(t) +A_3(t)$
 for  all $t\in [0,T]$.
 Then $A(t)$ is an operator
 from  $ V$ to $ V^*$.

 In terms of the above notation,
 the stochastic $p$-Laplace equation
 \eqref{lap1} can be rewritten in the
 abstract form: 
$$
  du (t)
   = A(t, u(t)) dt  
   +  B  (t, u(t))    {dW},
$$
where $W$ is a cylindrical 
Wiener process
in $l^2$.

%
%
%

The main result of this subsection is given below.

\begin{thm}\label{ma1pla}
	If  \eqref{f1}-\eqref{sig5} hold,
	$sp>n$ and
	\be\label{ma1pla 1}
	 \sum_{i=1}^\infty
	\beta_i <{\frac 16}  \lambda C(n,p, s),
	\ee
	where $\lambda$ is the constant
	in \eqref{pi},
	$C(n,p,s )$ is 
	the number in \eqref{constpl},
	and $\{\beta_i\}_{i=1}^\infty$
	is the sequence in \eqref{sig3}.
Then for every $u_0 \in H$,
	system \eqref{lap1}-\eqref{lap3}
	has a unique solution
	in the sense of Definition \ref{dsol}. 
	Moreover, the uniform estimates
	given by \eqref{main 1} are valid.
\end{thm}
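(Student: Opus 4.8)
The plan is to obtain Theorem~\ref{ma1pla} as a direct application of the abstract result Theorem~\ref{main}, taking $J=3$ with $V_1, V_2$ and $V_3 = H$ as above (so that $q_1 = p$, $q_2 = q$, $q_3 = 2$), driving space $U = l^2$, and operators $A = A_1 + A_2 + A_3$ and $B$ given by \eqref{cA1a}--\eqref{sig6}. Everything then reduces to verifying {\bf (H1)}--{\bf (H5)} and the continuity requirement \eqref{h5c}; once these hold, existence, uniqueness, and the moment estimate \eqref{main 1} follow verbatim from Theorem~\ref{main}. Hypothesis {\bf (H1)} is immediate, since by \eqref{cA1}, \eqref{cA2} and \eqref{cA3} the map $A$ is a sum of one hemicontinuous and two continuous operators. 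For {\bf (H3)} I would use $(A_1 v, v)_{(V_1^*,V_1)} = -\tfrac12 C(n,p,s)\|v\|_{V_1}^p$, read off from \eqref{cA1a}, and the dissipativity \eqref{f2} of $f$, which provide the coercivity constants $\gamma_{1,1} = \tfrac12 C(n,p,s)$ and $\gamma_{1,2} = \delta_1$; the Lipschitz bound \eqref{h1} places the $h$-contribution into $g(t)(1+\|v\|_H^2)$, while the positive $\gamma_{1,3}\|v\|_H^2$ formally required for $V_3 = H$ is reserved from a fraction of the $V_2$-coercivity (recall $V_2 = L^q \hookrightarrow H$ on the bounded domain $\o$), which leaves the ratio $\gamma_{1,1}\gamma_{2,1}^{-1}$ governing \eqref{h5b} unchanged. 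Hypothesis {\bf (H4)} with every $\beta_{1,j} = 0$ follows from the standard estimate $\|A_1 v\|_{V_1^*}^{p/(p-1)} \le c\,\|v\|_{V_1}^p$, from \eqref{f3} (using $(q-1)\tfrac{q}{q-1} = q$), and from \eqref{h1}.

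The two genuinely substantive points are {\bf (H2)} and {\bf (H5)}. For the fully local monotonicity {\bf (H2)} I would combine the monotonicity of the fractional $p$-Laplacian $(-\Delta)_p^s$ and the one-sided bound \eqref{f1} for $f$ (both giving a nonpositive contribution) with the Lipschitz estimate \eqref{h1} for $h$ (contributing $2\|\varphi_3(t)\|_{L^\infty}\|u-v\|_H^2$, absorbed into $g$ since $\varphi_3 \in L^1(0,T;L^\infty)$). The decisive step is the diffusion difference \eqref{sig8}: here I would invoke the embedding $V_1 = W^{s,p} \hookrightarrow L^\infty$, which holds \emph{precisely because} $sp > n$, to estimate $\int_\o (1 + |u|^{p_1-2} + |v|^{p_1-2})|u-v|^2\,dx \le (1 + C_\infty^{p_1-2}\|u\|_{V_1}^{p_1-2} + C_\infty^{p_1-2}\|v\|_{V_1}^{p_1-2})\|u-v\|_H^2$. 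This is exactly what yields \eqref{h2a} with $\varphi(u) = c_\ast\|u\|_{V_1}^{p_1-2}$ and $\psi(v) = c_\ast\|v\|_{V_1}^{p_1-2}$, where $c_\ast = C_\infty^{p_1-2}\sum_i \gamma_i$; since $p_1 - 2 < p$, these fit \eqref{h2b}--\eqref{h2c} with $\alpha = 0$, $\beta_{1,j} = \beta_{2,j} = 0$, $\theta_1 = p_1 - 2 \in [0,p)$ and $\theta_2 = \theta_3 = 0$. The requirement of a clean $\|u-v\|_H^2$ factor is what forces the $L^\infty$-route, so $sp > n$ is essential rather than merely convenient.

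For {\bf (H5)} I would read off from the growth estimate \eqref{sig7}, together with Poincar\'e's inequality \eqref{pi}, the bound $\|B(t,v)\|_{\call_2(l^2,H)}^2 \le \gamma_{2,1}\|v\|_{V_1}^p + g(t)(1+\|v\|_H^2)$ with $\gamma_{2,1} = 2\lambda^{-1}\sum_i \beta_i$ and $\gamma_{2,2} = \gamma_{2,3} = 0$ (the critical case being $p_1 = p$; for $p_1 < p$ the $V_1$-growth is subcritical and absorbable). The heart of the matter is the balance condition \eqref{h5b}. Since $\gamma_{2,2} = \gamma_{2,3} = 0$, the minimum there is attained at $j=1$, where $2\gamma_{1,1}\gamma_{2,1}^{-1} = \tfrac{\lambda C(n,p,s)}{2\sum_i \beta_i}$, whereas \eqref{h5ca} gives $\max_{1\le j\le J}\kappa_j = \kappa_1 = 1 + 2(p_1-2)/p \le 3 - 4/p < 3$. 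Hence the smallness assumption \eqref{ma1pla 1}, $\sum_i \beta_i < \tfrac16 \lambda C(n,p,s)$, forces $2\gamma_{1,1}\gamma_{2,1}^{-1} > 3 > \max_j \kappa_j$, which is exactly why the constant $\tfrac16$ appears. I expect this constant-tracking, together with the use of the $L^\infty$-embedding to tame the superlinear noise in {\bf (H2)}, to be the main obstacle. It then remains to check \eqref{h5c}: given $u_n \to u$ in $L^1(0,T;H)$ with the stated uniform bounds, I would extract an a.e.-convergent subsequence and, using the continuity of each $\sigma_{2,i}$, the local Lipschitz bound \eqref{sig4}, the $L^p(0,T;L^\infty)$ control afforded by $V_1 \hookrightarrow L^\infty$, and the summability \eqref{sig5}, pass to the limit by dominated convergence to obtain $v^* B(\cdot, u_n) \to v^* B(\cdot, u)$ in $L^2(0,T;\call_2(l^2,\R))$ for every $v \in V$. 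With {\bf (H1)}--{\bf (H5)} and \eqref{h5c} established, Theorem~\ref{main} yields the unique solution and the estimate \eqref{main 1}, completing the proof.
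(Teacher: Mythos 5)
Your proposal is correct and follows essentially the same route as the paper's proof: apply Theorem \ref{main} with $J=3$ and the same spaces, verify {\bf (H1)}--{\bf (H5)} with the same constants ($\gamma_{1,1}=\tfrac12 C(n,p,s)$, $\gamma_{1,2}=\delta_1$, $\gamma_{2,1}=2\lambda^{-1}\sum_i\beta_i$, $\gamma_{2,2}=\gamma_{2,3}=0$), use the embedding $V_1=W^{s,p}\hookrightarrow L^\infty$ (valid precisely because $sp>n$) to tame the superlinear diffusion difference in {\bf (H2)}, and observe that \eqref{ma1pla 1} yields $\max_j\kappa_j\le 3-4/p<3<\lambda C(n,p,s)/(2\sum_i\beta_i)=2\min_j\gamma_{1,j}\gamma_{2,j}^{-1}$, which is the paper's condition \eqref{ma1pla p10} in equivalent form. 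The only cosmetic differences are your sharper choice $\theta_1=p_1-2$ where the paper uses $\theta_1=p-2$ (both subcritical, both giving $\kappa_1<3$), your bookkeeping of $\gamma_{1,3}$ via the embedding $L^q\hookrightarrow H$ rather than the paper's direct choice $\gamma_{1,3}=1$, and that your final check of \eqref{h5c} should invoke Vitali's theorem (uniform integrability coming from the $L^{p/(p-2)}(0,T)$ bound on $1+\|u_n\|_{V_1}^{p-2}+\|u\|_{V_1}^{p-2}$) or a H\"older argument rather than dominated convergence, since no fixed dominating function is available from mere boundedness in $L^p(0,T;V_1)$.
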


\begin{proof}
Under conditions  \eqref{f1}-\eqref{sig5}, we
  will 
  verify all assumptions 
	of Theorem \ref{main} are fulfilled.
	First, by  \eqref{cA1}-\eqref{cA3}, we find
	that $A: V\to V^*$ is 
   hemicontinuous; that is,
	$A$ satisfies 
 {\bf (H1)}.

 Next, we verify  {\bf (H2)}.
 Recall the inequality: for all $s_1, s_2 \in \R$
 and $p\ge 2$,
 \be \label{ma1pla p1}
 (|s_1|^{p-2} s_1-
 |s_2|^{p-2} s_2) (s_1- s_2)
 \ge 2^{1-p} |s_1 -s_2 |^p.
 \ee
 By  \eqref{ma1pla p1} we have, 
 for all $u, v\in V_1$,
  \be \label{ma1pla p2}
	2( A_1(u) -A_1 (v),
	u-v)_{(V_1^*, V_1)}
	\le - 2^{1-p} C(n,p , s)
	\| u -v \|_{V_1}^p.
	\ee
	By 
	  \eqref{f1}, \eqref{h1} and
	  \eqref{ma1pla p2} 
	   we get,
	for $ t\in [0,T]$ and $u, v\in V$,
	   \be \label{ma1pla p3}
	2( A(t,u) -A(t,v),
	u-v)_{(V^*, V)}
	\le  - 2^{1-p} C(n,p , s)
	\| u -v \|_{V_1}^p +
2	\| \varphi_3(t)\|_{L^\infty(\o)}
	\| u-v \|_H^2.
\ee
Since $p>{\frac ns}$, by the embedding theorem,
we find that there exists $c_1= c_1 (n, p,  s,
\o)>0$ such that 
 	   \be \label{ma1pla p4}
 \|u \|_{L^\infty (\o)}
 \le c_1 \| u \|_{V_1},
 \quad \forall \ u \in V_1.
\ee
	By \eqref{sig8} and \eqref{ma1pla p4} we get,
	for $t\in [0,T]$ and
  $u, v\in V$,
	   \be \label{ma1pla p5}
\|  B(t,u) -B(t,v) \|^2_{\call_2(l^2, H)}
\le
\sum_{i=1}^\infty \gamma_i
\left (
1+ \| u\|_{L^\infty(\o)}^{p_1-2}
+ \|  v\|_{L^\infty(\o)}^{p_1-2}
\right ) \| u-v \|^2_H 
\ee
	Since $p_1\in [2, p]$, by
	\eqref{ma1pla p4}-\eqref{ma1pla p5}
	we get, 
	for $t\in [0,T]$ and
  $u, v\in V$,
	   \be \label{ma1pla p6}
\|  B(t,u) -B(t,v) \|^2_{\call_2(l^2, H)}
\le
c_2
\left (
1+ \| u\|_{V_1}^{p-2}
+ \|  v\|_{V_1}^{p-2}
\right ) \| u-v \|^2_H ,
\ee
	for some
	$c_2=c_2 (n,p, s, \o)>0$.
	By    
	  \eqref{ma1pla p3} 
	  and  \eqref{ma1pla p6} 
	   we have,
	for $ t\in [0,T]$ and $u, v\in V$,
	$$
	2( A(t,u) -A(t,v),
	u-v)_{(V^*, V)}
	+\|  B(t,u) -B(t,v) \|^2_{\call_2(l^2, H)}
	$$
	   \be \label{ma1pla p7}
	\le   \left (   2
	\| \varphi_3(t)\|_{L^\infty(\o)}
  +c_2 
+c_2  \| u\|_{V_1}^{p-2}
+c_2 \|  v\|_{V_1}^{p-2}
\right ) \| u-v \|^2_H,
\ee
	 where $\varphi_3\in L^1(0,T;
	L^\infty(\o))$.
	By \eqref{ma1pla p7},  after
	simple calculations, we infer that
	 {\bf (H2)}  is satisfied
	 with the following parameters:
	 $g=2\| \varphi_3(\cdot)\|_{   
	L^\infty(\o) }$ and 
	 	   \be \label{ma1pla p7a}
	 q_1 =p, \ q_2 =q,\
	 q_3= 2,\
	 \alpha =0,\
	 \theta_1= p-2,\
	 \theta_2=\theta_3
	 =\beta_{1, j}=\beta_{2,j} =0,
	 \quad  \ j=1,2,3.
\ee

	  We now  verify  {\bf (H3)}.
 By  \eqref{f2},  \eqref{h1}
	 and 
	 \eqref{cA1a}, 
	 we have 
  for $t\in [0,T]$  and $v\in V$,
 	\be\label{ma1pla p8}
	 ( A(t,v), v)_{(V^*,V)} 
	\le
	-{\frac 12} C(n, p, s)
	 \| v\|^p _{V_1}
	-\delta_1 \| v\|^q_{V_2}
	+  \|\varphi_1 (t)\|_{L^1(\o)}
	+ 
	\|\varphi_3 (t)\|_{L^\infty(\o)}\|v\|^2_H,
	 \ee
	where $\varphi_1\in L^1(0,T; L^1(\o))$
	and
	$\varphi_3 \in L^1(0,T; L^\infty(\o))$.
	By \eqref{ma1pla p8} we find that
	{\bf (H3)} is satisfied with the parameters:
 	   \be \label{ma1pla p8a}
	\gamma_{1,1} = {\frac 12} C(n, p, s), \
	\gamma_{1,2}= \delta_1,
	\ \gamma_{1,3} = 1.
\ee
	
	On the other hand,
		{\bf (H4)} follows straightforward
		from \eqref{f3}, \eqref{h1} and
		\eqref{cA1a} with the parameters
		given by \eqref{ma1pla p7a}.
		It remains to verify  	{\bf (H5)}.
		
		By \eqref{pi}, \eqref{sig7}
		and Young's inequality,
		we find that there exists
		a constant $c_3=c_3
		(n, p,  s,   \o)>0$
		 such that  
		for $t\in [0,T]$
		and $v\in V$,
		$$
		\| B(t, v)\|^2_{\call_2(l^2, H)}
		\le  2 \sum_{i=1}^\infty 
		 \|\sigma_{1,i}  (t) \|^2_H
		 + c_3
		 + 2\sum_{i=1}^\infty \beta_i
		 \int_\o |v(x)|^p dx
		 $$
		 $$
		\le  2 \sum_{i=1}^\infty 
		 \|\sigma_{1,i}  (t) \|^2_H
		 + c_3
		 + 2\lambda^{-1} \sum_{i=1}^\infty \beta_i
	\| v \|^p_{V_1},
		 $$
	which along with 	\eqref{sig2}
	shows that  	{\bf (H5)} is satisfied with
	the parameters:
	   \be \label{ma1pla p9}
	\gamma_{2,1} =  2\lambda^{-1} \sum_{i=1}^\infty \beta_i, \
	\gamma_{2,2}=0,
	\ \gamma_{2,3} = 0.
\ee

By  \eqref{ma1pla p7a},
	\eqref{ma1pla p8a} and \eqref{ma1pla p9}
we find that, 
in the present case, the constants in \eqref{h5c}
are given by
$$
\kappa_1 = 3- 4p^{-1}, \
\kappa_2=\kappa_3 =1,
$$
and  the condition \eqref{h5b}
reduces to:
 	   \be \label{ma1pla p10}
{\frac 1p}
> {\frac 34}
-{\frac {\lambda C(n,p,s)}{8\sum_{i=1}^\infty
\beta_i}}.
\ee
By \eqref{ma1pla 1} we have
 ${\frac 34}
-{\frac {\lambda C(n,p,s)}{8\sum_{i=1}^\infty
\beta_i}}<0$, and hence
\eqref{ma1pla p10} is fulfilled.
By \eqref{ma1pla p6} we find that
$B$ is continuous from $V$ to $\call_2(U,H)$ and
  satisfies \eqref{h5d}, and hence satisfies \eqref{h5c}.
	This completes the proof. 
 \end{proof}

Notice that Theorem \ref{ma1pla}
is valid only under the condition
$p>{\frac ns}$ which excludes the
standard fractional Laplace operator
for $p=2$ when $n>1$.
In the next subsection, we will remove this
restriction when $f$ satisfies a strong
monotonicity condition.

\subsection{Fractional 
$p$-Laplace   equations with a 
strong 
monotonicity  drift}

In this subsection, we 
improve Theorem \ref{ma1pla} by removing
the assumption
$p>{\frac ns}$. To that end, we will require
the nonlinearity $f$ satisfy  a stronger
monotone condition than
\eqref{f1}. More precisely, 
throughout this subsection,  
 we further assume    that 
$f$ satisfies:
for all $t, u_1, u_2\in \R$ and $x\in \R^n$,
 \be\label{ff1}
\left (
f(t,x, u_1) -f(t,x, u_2)\right )
(u_1-u_2)
 \le -\delta_3 \left ( |u_1|^{q-2}
+ |u_2|^{q-2} \right ) (u_1 -u_2)^2,
 \ee
where $\delta_3>0$ and $q\ge 2$  are  constants.
Note that   if $f(t,x, u) =- |u|^{q-2} u$, then
\eqref{ff1} is fulfilled.

In this subsection, we
   assume the exponent
$p_1$ in \eqref{sig3}-\eqref{sig4}
belongs to $[2, q)$ instead of $[2, p]$.
In this case, by the argument of \eqref{sig7}
we find that
for every $v\in L^{p_1}
  (\R^n)
  \bigcap H$  and $t\in \R$,   
   \be\label{sig77a}
 \|  B (t,v)   \|^2_{\call_2 (l^2, H)}
   \le
  2 \sum_{i=1}^\infty 
  \|\sigma_{1, i} (t) \|^2_ {H}
  + 2 |\o|
  \sum_{i=1}^\infty \gamma_i 
  +2|\o|^{\frac {q-p_1}q} \sum_{i=1}^\infty
  \beta_i 
  \left (\int_{\o}  |v(x)|^{q} dx
  \right )^{\frac {p_1}q} .
\ee

The main result of this section is given below.

\begin{thm}\label{ma2pla}
	If  \eqref{f2}-\eqref{sig5}
	and \eqref{ff1} hold 
  and
	\be\label{ma2pla 1} 
	2\le p_1<  q, \quad 
 \sum_{i=1}^\infty
\beta_i  < \delta_1, \quad
   \sum_{i=1}^\infty
	\gamma_i  \le   2 \delta_3,
	\ee
	where  $p_1$, $ \beta_i ,$
	and   $ \gamma_i  $
	are  the numbers  in 
	\eqref{sig3}-\eqref{sig4}.
Then for every $u_0 \in H$,
	system \eqref{lap1}-\eqref{lap3}
	has a unique solution
	in the sense of Definition \ref{dsol}. 
	Moreover, the uniform estimates
	given by \eqref{main 1} are valid.
\end{thm}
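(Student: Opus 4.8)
The plan is to deduce Theorem \ref{ma2pla} from the abstract Theorem \ref{main}, exactly as Theorem \ref{ma1pla} was, by checking \textbf{(H1)}--\textbf{(H5)} together with the continuity of $B$ and the convergence \eqref{h5c} for $A=A_1+A_2+A_3$ and $B$ built in \eqref{cA1a}--\eqref{sig6}, with $V_1=W^{s,p}$, $V_2=L^q$, $V_3=H$ and exponents $q_1=p$, $q_2=q$, $q_3=2$. Once this is verified, both the well-posedness and the uniform estimates \eqref{main 1} follow immediately from Theorem \ref{main}. The one genuinely new feature is that, having dropped $sp>n$, we lose the embedding $V_1\subseteq L^\infty(\o)$ used in Theorem \ref{ma1pla}; hence the superlinear noise can no longer be dominated by the $V_1$-norm and must be controlled entirely on the $L^q=V_2$ scale, which is precisely where the strong monotonicity \eqref{ff1} and the strict inequality $p_1<q$ are used.

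Hypotheses \textbf{(H1)}, \textbf{(H3)}, \textbf{(H4)} are handled as before. \textbf{(H1)} is unchanged: by \eqref{cA1}--\eqref{cA3}, $A$ is hemicontinuous from $V$ to $V^*$. For \textbf{(H3)}, combining $(A_1v,v)_{(V_1^*,V_1)}=-\frac12 C(n,p,s)\|v\|_{V_1}^p$ with \eqref{f2} and \eqref{h1} gives, for $t\in[0,T]$ and $v\in V$,
$$(A(t,v),v)_{(V^*,V)}\le -\tfrac12 C(n,p,s)\|v\|_{V_1}^p-\delta_1\|v\|_{V_2}^q+\|\varphi_1(t)\|_{L^1(\o)}+\|\varphi_3(t)\|_{L^\infty(\o)}\|v\|_H^2,$$
so \textbf{(H3)} holds with $\gamma_{1,1}=\frac12 C(n,p,s)$, $\gamma_{1,2}=\delta_1$, $\gamma_{1,3}=1$ (absorbing a spare $-\|v\|_H^2$ into $g$). \textbf{(H4)} follows from $\|A_1v\|_{V_1^*}\le c\|v\|_{V_1}^{p-1}$ for the $p$-Laplacian, from \eqref{f3} for $A_2$, and from \eqref{h1} for $A_3$, all with $\beta_{1,j}=0$, exactly as in Theorem \ref{ma1pla}.

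The crux is \textbf{(H2)}. Using \eqref{ma1pla p2} for $A_1$, the strong monotonicity \eqref{ff1} for $A_2$, and \eqref{h1} for $A_3$, I obtain for $u,v\in V$
\begin{align*}
& 2(A(t,u)-A(t,v),u-v)_{(V^*,V)} \\
& \qquad \le -2^{1-p}C(n,p,s)\|u-v\|_{V_1}^p-2\delta_3\int_\o\big(|u|^{q-2}+|v|^{q-2}\big)(u-v)^2\,dx+2\|\varphi_3(t)\|_{L^\infty(\o)}\|u-v\|_H^2.
\end{align*}
For the diffusion difference I start from \eqref{sig8} and insert the elementary pointwise bound $|w|^{p_1-2}\le 1+|w|^{q-2}$, valid exactly because $2\le p_1<q$, to get
$$\|B(t,u)-B(t,v)\|_{\call_2(l^2,H)}^2\le 3\Big(\sum_{i=1}^\infty\gamma_i\Big)\|u-v\|_H^2+\Big(\sum_{i=1}^\infty\gamma_i\Big)\int_\o\big(|u|^{q-2}+|v|^{q-2}\big)(u-v)^2\,dx.$$
Adding the two estimates, the term $\int_\o(|u|^{q-2}+|v|^{q-2})(u-v)^2\,dx$ comes with coefficient $\sum_i\gamma_i-2\delta_3\le 0$ by the third inequality in \eqref{ma2pla 1}; dropping it (and the nonpositive $V_1$-term) leaves the left side of \eqref{h2a} bounded by $\big(2\|\varphi_3(t)\|_{L^\infty(\o)}+3\sum_i\gamma_i\big)\|u-v\|_H^2$. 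Thus \textbf{(H2)} holds with $g(t)=2\|\varphi_3(t)\|_{L^\infty(\o)}\in L^1([0,T])$ and with $\varphi,\psi$ both equal to the constant $\frac32\sum_i\gamma_i$; in particular all growth exponents vanish, $\alpha=\theta_j=\beta_{1,j}=\beta_{2,j}=0$, so $\kappa_j=1$ for every $j$ by \eqref{h5ca}.

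Finally, for \textbf{(H5)} I argue as in the derivation of \eqref{sig7} but estimate $\int_\o|v|^{p_1}\,dx\le|\o|+\int_\o|v|^q\,dx$ (again using $p_1<q$), obtaining $\|B(t,v)\|_{\call_2(l^2,H)}^2\le g(t)(1+\|v\|_H^2)+2\big(\sum_i\beta_i\big)\|v\|_{V_2}^q$ with $g$ controlled by \eqref{sig2}; hence $\gamma_{2,2}=2\sum_i\beta_i$ and $\gamma_{2,1}=\gamma_{2,3}=0$. Since $\kappa_j=1$ and $\min_{1\le j\le 3}\gamma_{1,j}\gamma_{2,j}^{-1}=\gamma_{1,2}\gamma_{2,2}^{-1}=\delta_1/(2\sum_i\beta_i)$, the balance \eqref{h5b} reduces to precisely $\sum_i\beta_i<\delta_1$, the second inequality in \eqref{ma2pla 1} (which also makes the range \eqref{main p} nonempty). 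The continuity of $B(t,\cdot):V\to\call_2(l^2,H)$ and the convergence \eqref{h5d}, hence \eqref{h5c}, follow from \eqref{sig8} and Hölder's inequality on the bounded domain $\o$, interpolating $L^q$-convergence into $L^{2q/(q-p_1+2)}$. The main obstacle is thus \textbf{(H2)}: with no $L^\infty$-control available, the only mechanism left to absorb the $L^q$-type superlinear noise is the dissipation $-2\delta_3\int_\o(|u|^{q-2}+|v|^{q-2})(u-v)^2\,dx$ coming from \eqref{ff1}, and the clean cancellation against \eqref{sig8} is exactly what dictates the sharp conditions $2\le p_1<q$ and $\sum_i\gamma_i\le 2\delta_3$. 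With \textbf{(H1)}--\textbf{(H5)} and the continuity of $B$ in hand, Theorem \ref{main} delivers the unique solution and the estimates \eqref{main 1}.
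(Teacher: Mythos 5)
Your proposal is correct and follows essentially the same route as the paper: both reduce to checking \textbf{(H1)}--\textbf{(H5)} of Theorem \ref{main}, with the key step being that the strong monotonicity \eqref{ff1} cancels the superlinear part of \eqref{sig8} via $\sum_i\gamma_i\le 2\delta_3$ in \textbf{(H2)}, and \textbf{(H5)} is verified on the $V_2=L^q$ scale with $\gamma_{2,2}=2\sum_i\beta_i$ so that \eqref{h5b} reduces to $\sum_i\beta_i<\delta_1$. The only differences are cosmetic (pointwise bounds versus Young's inequality, placing the spare constant in $\varphi,\psi$ rather than in $g$), and your justification of the continuity of $B$ and of \eqref{h5d}/\eqref{h5c} by H\"older interpolation using $p_1<q$ is exactly what the paper leaves implicit.
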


\begin{proof}
In the present case,  we 
  only need to verify {\bf (H2)}
and {\bf (H5)} of  Theorem \ref{main}
since {\bf (H1)}, {\bf (H3)} and {\bf (H4)}
are the same as   in Theorem \ref{ma1pla}.

  Since $p_1\in [2,q]$,
by  \eqref{sig8} and Young's inequality  we have,
     for  $t\in [0,T]$ and 
 $u_1,u_2\in
L^{p_1}(\mathbb{R}^n)\bigcap H$,
$$
 \|B (t,  u_1)-
B (t,  u_2)\|^2_{\call_2(l^2 H)}
  \leq   \sum_{i=1}^\infty \gamma_i 
  \int_{\o} 
   \big( 1+ |u_1(x)|^{p_1-2}  
+|u_2(x)|^{p_1-2}
\big)|u_1(x)-u_2(x)|^2dx
$$
  \be \label{ma2pla p1a}
\le
c_1\|u_1-u_2\|^2_H  + 
   \sum_{i=1}^\infty \gamma_i 
  \int_{\o} 
   \big(   |u_1(x)|^{q-2}  
+|u_2(x)|^{q-2}
\big)|u_1(x)-u_2(x)|^2dx,
\ee
for some constant $c_1=c_1( p_1, q, \o)>0$.
On the other hand,  by  
   \eqref{h1},
	  \eqref{ma1pla p2}  and \eqref{ff1}
	   we get,
	for $ t\in [0,T]$ and $u, v\in V$,
	$$
	2( A(t,u) -A(t,v),
	u-v)_{(V^*, V)}
	$$
	$$
  \le   
	-2^{1-p} C(n,p,s) \| u-v\|^p_{V_1}
	 +
	2\| \varphi_3(t)\|_{L^\infty(\o)}
	\| u-v \|_H^2
	$$
		   \be \label{ma2pla p3}
	 -2\delta_3  \int_{\o} 
   \big(   |u_1(x)|^{q-2}  
+|u_2(x)|^{q-2}
\big)|u_1(x)-u_2(x)|^2dx .
\ee
It follows from \eqref{ma2pla 1}-\eqref{ma2pla p3}
that
   for $ t\in [0,T]$ and $u, v\in V$,
	$$
	2( A(t,u) -A(t,v),
	u-v)_{(V^*, V)}
	+\|  B(t,u) -B(t,v) \|^2_{\call_2(l^2, H)}
	$$
	   \be \label{ma2pla p7}
	\le  -2^{1-p} C(n,p,s) \| u-v\|^p_{V_1}
	+ \left (   c_1+ 2
	\| \varphi_3(t)\|_{L^\infty(\o)}
 \right ) \| u-v \|^2_H,
\ee
	 where $\varphi_3\in L^1(0,T;
	L^\infty(\o))$.
	By \eqref{ma2pla p7},   we see  that
	 {\bf (H2)}  is satisfied
	 with the following parameters:
	 $g= c_1 + 2\| \varphi_3(\cdot)\|_{   
	L^\infty(\o) }$ and 
	 	   \be \label{ma2pla p7a}
	 q_1 =p, \ q_2 =q,\
	 q_3= 2,\
	 \alpha =0,\ \alpha_1 =0,\ 
	 \theta_j=   
	 \beta_{1, j}=\beta_{2,j} =0,
	 \quad  \ j=1,2,3.
\ee

		By   \eqref{sig77a}
		and Young's inequality,
		we find that there exists
		a constant $c_2=c_2
		(n, q,  s,   \o)>0$
		 such that  
		for $t\in [0,T]$
		and $v\in V$,
	\be\label{ma2pla p9a}
		\| B(t, v)\|^2_{\call_2(l^2, H)}
		\le  2 \sum_{i=1}^\infty 
		 \|\sigma_{1,i}  (t) \|^2_H
		 + c_2
		 + 2\sum_{i=1}^\infty \beta_i
		 \int_\o |v(x)|^q dx,
	\ee
		   which along with 	\eqref{sig2}
	shows that  	{\bf (H5)} is satisfied with
	the parameters:
	   \be \label{ma2pla p9}
	\gamma_{2,1} =
	0, \  \gamma_{2,2}=
	  2  \sum_{i=1}^\infty \beta_i,  \
   \gamma_{2,3} = 0.
\ee
 By  \eqref{ma2pla p7a},
	\eqref{ma1pla p8a} and \eqref{ma2pla p9}
we find that, 
in the present case, the constants in \eqref{h5c}
are given by
$$
\kappa_1 = 
\kappa_2=\kappa_3 =1,
$$
and  the condition \eqref{h5b}
reduces to:
 	   \be \label{ma2pla p10}
 \sum_{i=1}^\infty
\beta_i  < \delta_1,
\ee
which is given by \eqref{ma2pla 1}.
By \eqref{ma2pla p1a} , one can verify
$B$ is continuous from $V$ to $\call_2(U,H)$.
In addition,    since $p_1<q$, we find that $B$ 
  satisfies \eqref{h5d} and hence
  \eqref{h5c},
  which
  completes the proof. 
  \end{proof}

\subsection{Standard fractional 
 Laplace   equations with $p=2$}

In this subsection, we 
consider 
the standard fractional Laplace operator
$(-\Delta)^s$ which is a 
  special case
of  the fractional $p$-Laplace operator
when  $p=2$. 
In this case,  we prove  the existence and
uniqueness of solutions of the 
stochastic equation driven by transport noise.
More precisely, let
       $G: [0,T]
 \times V_1  \to \call_2 (l^2,H)$ be a 
  $(\calb ([0,T]) \times
  \calb (V_1), \calb (\call_2 (l^2,H)) )$-measurable
  function such that
  for all $t\in [0,T]$  and  $u, v \in V_1$,
 \be\label{G1}
 \| G(t, u)\|^2_{\call_2 (l^2,H)}
 \le \delta_4 \| u \|^2_{V_1}
 + \varphi_4 (t) (1+ \| u \|_H^2),
 \ee
 and
   \be\label{G2}
 \| G(t, u)-G(t,v)\|^2_{\call_2 (l^2,H)}
 \le  \delta_5 \| u-v \|^2_{V_1}
 + \varphi_4 (t)  \| u-v \|_H^2,
 \ee
  where $\delta_4 >0$
  and $\delta_5>0$ are  constants
  and $\varphi_4 \in L^1([0,T])$.

Note that \eqref{G2} implies that
for every $t\in [0,T]$,
$G (t,\cdot): V \to 
   \call_2(l^2,H)$   is continuous.
   We  further  assume   that
 if $u \in L^\infty(0,T; H)
\bigcap  (
\bigcap\limits_{1\le j\le J} L^{q_j}
(0,T; V_j)
  )$
  and 
  $\{u_n\}_{n=1}^\infty
 $ is  bounded   in
 $ L^\infty(0,T; H)
\bigcap  (
\bigcap\limits_{1\le j\le J} L^{q_j}
(0,T; V_j)
  )$
  such that
  $u_n  \to u$ in $L^1(0,T; H)$,
  then
  \be\label{G3}
   \lim_{n\to \infty}
  v^* G (\cdot ,   u_n) =  v^*G(\cdot,  u)
   \ \text{ in }  \  L^2(0,T; \call_2 (l^2,\R)),
   \quad   \forall  \  v\in {V},
   \ee
   where 
   $v^*$ is the element in $H^*$
   identified with $v$ in $H$  by  
   Riesz's  representation theorem.

    A  typical example of $G$ satisfying
    all conditions \eqref{G1}-\eqref{G3}
    is given below.

    \begin{example} 
 For every $i\in \N$, 
let 
      $g_i \in L^\infty(\R^n)
      \bigcap V_1$ such that
         \be \label{G4}
        \sum_{i=1}^\infty
        \left ( \| g_i\|^2_{L^\infty (\R^n)}
         + \|g_i\|^2_{V_1}
         \right )
         <\infty.
         \ee
      
  Given $u\in V_1
     $,  define an operator
  $G(u):  l^2\to H$ by
\be\label{G5}
  G (u) (a) (x)
  =\sum_{i=1}^\infty a_i 
  g_i (x) (-\Delta)^{\frac s2} u(x)  
   \quad \forall \ 
   a=\{a_i\}_{i=1}^\infty \in l^2,
   \ \ x\in \R^n.
\ee
It follows from 
  \eqref{G4}      that 
  $G (u): l^2\to H$ is a   Hilbert-Schmidt 
  operator 
   with norm
$$
 \|  G (u)   \|^2_{\call_2 (l^2, H)}
 =
 \sum_{i=1}^\infty
 \|  g_i (-\Delta)^{\frac s2} u   \|^2_H
 $$
   \be\label{G6}
 \le  \sum_{i=1}^\infty \| g_i \|^2_{L^\infty (\R^n)}
 \| (-\Delta)^{\frac s2} u   \|^2_H 
\le {\frac 12} C
 \sum_{i=1}^\infty \| g_i \|^2_{L^\infty (\R^n)}
 \|   u   \|^2_{V_1} ,
\ee
   where $C=C(n,2,s)$ is the number given by \eqref{constpl}
   with $p=2$.
 Similarly, by \eqref{G5}   we have, 
     for all 
 $u_1,u_2\in V_1$,
 \be\label{G7}
 \|G (  u_1)-
G ( u_2)\|^2_{\call_2(l^2,  H)}
  \le {\frac 12}C 
 \sum_{i=1}^\infty \| g_i \|^2_{L^\infty (\R^n)}
 \|   u_1-u_2   \|^2_{V_1}.
 \ee
  By \eqref{G6}-\eqref{G7} we see that
  \eqref{G1} and \eqref{G2} are fulfilled.
 
 It remains to show \eqref{G3}.
Suppose $u \in L^\infty(0,T; H)
\bigcap  
   L^{2}
(0,T; V_1 
  )$
  and 
  $\{u_n\}_{n=1}^\infty
 $ is  bounded   in
 $ L^\infty(0,T; H)
\bigcap    L^{2}
(0,T; V_1 
  )$
  such that
 \be\label{G7a}
 \lim_{n\to \infty}
 \| u_n -u\| _{L^1(0,T; H)} =0.
 \ee
 Since 
  $\{u_n\}_{n=1}^\infty
 $ is  bounded   in
 $ L^\infty(0,T; H)$, by \eqref{G7a}
 we get
 \be\label{G7b}
  \lim_{n\to \infty}
 \| u_n -u\| _{L^2(0,T; H)} =0.
 \ee
 Since 
 $\{u_n\}_{n=1}^\infty
 $ is  bounded   in 
$  L^{2}
(0,T; V_1 
  )$, there is  a constant
  $c_1>0$ such that
   \be\label{G7c}
 \sup_{n\in \N}
\| u_n\|^2_{L^2(0,T; V_1)}
  \le c_1.
  \ee
  For  every  $v\in V_1$, 
  by \eqref{G5} we have
$$
 \|v^* G (  u_1)-
v^* G ( u_2)\|^2_{\call_2(l^2,  \R )}
= \sum_{i=1}^\infty 
|(g_i (-\Delta)^{\frac s2}(u_1-u_2),   v)_H|^2
$$
\be\label{G8}
 =\sum_{i=1}^\infty 
|( (u_1-u_2),  (-\Delta)^{\frac s2}(g_i v))_H|^2
\le
\sum_{i=1}^\infty 
\|  u_1-u_2\|_H^2
 \|   (-\Delta)^{\frac s2}(g_i v)\|_H^2.
\ee
 Recall that for every $v\in C_0^\infty (\o)$,
  \be\label{G9}
 \|(-\Delta)^{\frac s2}(g_i v)\|^2_H
 =
 {\frac 12} C(n,2,s)
 \| g_i v\|^2_{V_1}
   \le
 C 
 \left (
 \|v\|^2_{L^\infty(\R^n)}
 \| g_i\|^2_{V_1}
 + \|g_i\|^2_{L^\infty (\R^n)}
 \| v\|^2_{V_1}
 \right ).
 \ee
 By \eqref{G8}-\eqref{G9} we get
for   all  $v\in  C _0^\infty(\o)$,  
$$
 \|v^* G (  u_1)-
v^* G ( u_2)\|^2_{\call_2(l^2,  \R)}
$$
$$
 \le C 
\left (
\sum_{i=1}^\infty
\| g_i \|^2_{V_1}
\| v \|^2_{L^\infty (\R^n)}
+
\sum_{i=1}^\infty \| g_i \| ^2_{L^\infty (\R^n)}
\| v \|^2_{V_1}
 \right )  \| u_1-u_2\|^2_H,
$$
which along with \eqref{G7b} shows that
\be\label{G10}
\lim_{n\to \infty} \int_0 ^T  \|v^* G (  u_n (t) )-
v^* G ( u(t))\|^2_{\call_2(l^2,  \R)} dt
=0, \quad
\forall \  v\in  C _0^\infty (\o).
\ee

Given $\zeta \in V_1$ and  $\varepsilon>0$,  
 since $C _0^\infty (\o)$ is dense
 in $V_1$, we find that
 there exists $v\in C _0^\infty (\o)$
 such that
 \be\label{G11}
 \| \zeta - v \|_{V_1} \le \varepsilon.
\ee
Note that 
$$
\int_0 ^T
 \|\zeta^* G (  u_n(t) )-
\zeta ^* G ( u(t) )\|^2_{\call_2(l^2,  \R)} dt
$$
\be\label{G12}
\le 2\int_0 ^T
\|(\zeta-v) ^*   (G (  u_n(t))-
  G ( u(t) )) \|^2_{\call_2(l^2,  \R)} dt
+ 2\int_0 ^T
\|v^* ( G (  u_n(t))-
 G ( u(t) ))\|^2_{\call_2(l^2,  \R)} dt.
\ee
For the first term on the right-hand side of
\eqref{G12}, by \eqref{pi}, \eqref{G7} and \eqref{G7c} we have
$$
2\int_0 ^T
\|(\zeta-v) ^*   (G (  u_n(t))-
  G ( u(t) )) \|^2_{\call_2(l^2,  \R)} dt
$$
$$
\le 
 2
\| \zeta-v\|_H^2
   \int_0 ^T
\|   G (  u_n(t))-
  G ( u(t)  ) \|^2_{\call_2(l^2,  H)} dt
 $$
 $$
\le
 C\| \zeta-v\|_H^2
 \sum_{i=1}^\infty \| g_i \|^2_{L^\infty (\R^n)}
  \int_0 ^T
\|     u_n(t) -
  u(t)    \|^2_{V_1}  dt
  $$ 
$$
\le 2
 C\| \zeta-v\|_H^2
 \sum_{i=1}^\infty \| g_i \|^2_{L^\infty (\R^n)}
\left (
\| u\|^2_{L^2(0,T; V_1)}
+ \sup_{n\in \N}
\| u_n\|^2_{L^2(0,T; V_1)}
 \right )
 $$
\be\label{G13}
\le 2 \lambda^{-1} 
 C\| \zeta-v\|_{V_1} ^2
 \sum_{i=1}^\infty \| g_i \|^2_{L^\infty (\R^n)}
\left (c_1+ 
\| u\|^2_{L^2(0,T; V_1)}
  \right ).
  \ee

By \eqref{G11}-\eqref{G13} we obtain that
for every  $\zeta \in V_1$ and  $\varepsilon>0$,  
  $$
\int_0 ^T
 \|\zeta^* G (  u_n(t) )-
\zeta ^* G ( u(t) )\|^2_{\call_2(l^2,  \R)} dt
$$
$$
\le 2 \varepsilon^2
\lambda^{-1} 
 C 
 \sum_{i=1}^\infty \| g_i \|^2_{L^\infty (\R^n)}
\left (c_1+ 
\| u\|^2_{L^2(0,T; V_1)}
  \right ).
$$
\be\label{G20}
 + 2\int_0 ^T
\|v^* ( G (  u_n(t))-
 G ( u(t) ))\|^2_{\call_2(l^2,  \R)} dt.
\ee
 Letting $n\to \infty$
  in \eqref{G20},
since $v\in C^\infty_0(\o)$, by \eqref{G10}
we get
for every  $\zeta \in V_1$ and $\varepsilon>0$,  
  \be\label{G21}
  \lim_{n\to \infty}
\int_0 ^T
 \|\zeta^* G (  u_n(t) )-
\zeta ^* G ( u(t) )\|^2_{\call_2(l^2,  \R)} dt
 \le 2 \varepsilon^2
\lambda^{-1} 
 C 
 \sum_{i=1}^\infty \| g_i \|^2_{L^\infty (\R^n)}
\left (c_1+ 
\| u\|^2_{L^2(0,T; V_1)}
  \right ).
 \ee
 Taking the limit of \eqref{G21} as $\varepsilon \to 0$,
  we obtain,
 for every  $\zeta \in V_1$,  
$$
  \lim_{n\to \infty}
\int_0 ^T
 \|\zeta^* G (  u_n(t) )-
\zeta ^* G ( u(t) )\|^2_{\call_2(l^2,  \R)} dt
=0,
$$
which yields \eqref{G3}, and thus completes the proof.
  \end{example}

We now consider the
fractional stochastic equation
 \eqref{lap1} with the diffusion term
  $B$ replaced by $B+G$,
  for which we have the following result.

\begin{thm}\label{ma3pla}
	If  \eqref{f2}-\eqref{sig5},
	  \eqref{ff1} and \eqref{G1}-\eqref{G2} hold 
  and
	\be\label{ma3pla 1} 
	2\le p_1 < q, \quad 
 \sum_{i=1}^\infty
\beta_i  < \delta_1, \quad
   \sum_{i=1}^\infty
	\gamma_i  \le   2 \delta_3,
	\quad  \delta_4<  C(n,2,s),
	\quad  \delta_5 \le {\frac 12} C(n,2,s),
	\ee
	where  $p_1$, $ \beta_i ,$
	and   $ \gamma_i  $
	are  the numbers  in 
	\eqref{sig3}-\eqref{sig4}, and
	$C(n,2,s)$ is the constant given by
	\eqref{constpl} with $p=2$.
Then for every $u_0 \in H$,
	system \eqref{lap1}-\eqref{lap3}
	with  $B$ replaced by $B+G$ 
	has a unique solution
	in the sense of Definition \ref{dsol}. 
	Moreover, the uniform estimates
	given by \eqref{main 1} are valid.
\end{thm}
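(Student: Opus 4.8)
The plan is to verify that all hypotheses of Theorem \ref{main} hold for the drift $A = A_1 + A_2 + A_3$ and the combined diffusion obtained by adjoining $G$ to $B$, and then to quote the conclusion directly. To turn ``$B+G$'' into a genuine Hilbert--Schmidt-valued diffusion with additive norms, I treat $B$ and $G$ as driven by independent noises, taking the noise space to be $U = l^2 \oplus l^2$ with combined coefficient $(B,G)$, so that
$$\|(B,G)(t,u)\|^2_{\call_2(U,H)} = \|B(t,u)\|^2_{\call_2(l^2,H)} + \|G(t,u)\|^2_{\call_2(l^2,H)},$$
and the difference norm splits additively with no cross term. Since the drift and the exponents $q_1 = p = 2$, $q_2 = q$, $q_3 = 2$ coincide with those in Theorem \ref{ma2pla}, the hemicontinuity \textbf{(H1)}, the coercivity \textbf{(H3)} (with $\gamma_{1,1} = \frac12 C(n,2,s)$, $\gamma_{1,2} = \delta_1$, $\gamma_{1,3} = 1$ as in \eqref{ma1pla p8a}), and the growth bound \textbf{(H4)} are inherited verbatim and need only be cited. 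Thus the work reduces to \textbf{(H2)}, \textbf{(H5)}, and the weak-continuity condition \eqref{h5c}.

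For \textbf{(H2)} I would start from the strong monotonicity estimate \eqref{ma2pla p3} for $A$ (with $p=2$ this yields the coercive term $-\frac12 C(n,2,s)\|u-v\|_{V_1}^2$ together with $-2\delta_3\int_\o(|u|^{q-2}+|v|^{q-2})(u-v)^2\,dx$), and then add the two diffusion differences, \eqref{ma2pla p1a} for $B$ and \eqref{G2} for $G$. The two critical contributions are absorbed by the drift: the term $+\delta_5\|u-v\|_{V_1}^2$ from $G$ is dominated by $-\frac12 C(n,2,s)\|u-v\|_{V_1}^2$ precisely when $\delta_5 \le \frac12 C(n,2,s)$, while the term $\sum_i\gamma_i\int_\o(|u|^{q-2}+|v|^{q-2})(u-v)^2\,dx$ from $B$ is dominated by $-2\delta_3\int_\o(\cdots)$ precisely when $\sum_i\gamma_i \le 2\delta_3$; both are exactly the hypotheses in \eqref{ma3pla 1}. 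What survives is a multiple of $\|u-v\|_H^2$ with $L^1$-in-time coefficient, so \textbf{(H2)} holds with $\varphi \equiv \psi \equiv 0$ and $g \in L^1([0,T])$.

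For \textbf{(H5)} I would add the growth bounds \eqref{ma2pla p9a} for $B$ (which, after Young's inequality, contributes $\gamma_{2,2} = 2\sum_i\beta_i$ in the $V_2 = L^q$ direction) and \eqref{G1} for $G$ (which contributes $\gamma_{2,1} = \delta_4$ in the $V_1$ direction, since $q_1=2$), with $\gamma_{2,3} = 0$; the lower-order pieces go into the $L^1$ function $g(1+\|v\|_H^2)$. Because $\alpha = \theta_j = \beta_{1,j} = \beta_{2,j} = 0$ here, every $\kappa_j$ in \eqref{h5ca} equals $1$, so the critical condition \eqref{h5b} reduces to $1 < \min\{\,C(n,2,s)/\delta_4,\ \delta_1/(\sum_i\beta_i)\,\}$ (the $j=3$ ratio being $+\infty$), i.e.\ exactly $\delta_4 < C(n,2,s)$ and $\sum_i\beta_i < \delta_1$ from \eqref{ma3pla 1}. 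I expect this to be the main obstacle: one must keep track of the two \emph{separate} critical directions ($V_1$ fed by $G$, and $V_2=L^q$ fed by $B$) and confirm that the two smallness conditions in \eqref{ma3pla 1} are precisely what \eqref{h5b} demands.

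Finally, for the continuity requirements: $B(t,\cdot)$ is continuous and, because $p_1 < q$, satisfies the strong convergence \eqref{h5d} and hence \eqref{h5c}, exactly as in Theorem \ref{ma2pla}; $G(t,\cdot)$ is continuous by \eqref{G2} and satisfies the weak convergence \eqref{h5c} by the standing assumption \eqref{G3}, so the combined coefficient $(B,G)$ satisfies \eqref{h5c}. With the compact embedding $V \subseteq H$, all hypotheses of Theorem \ref{main} are met, and its conclusion delivers the unique solution together with the uniform estimates \eqref{main 1}. A conceptual point worth stressing is that $G$, being a transport noise depending on $(-\Delta)^{s/2}u$, does \emph{not} satisfy the strong condition \eqref{h5d}, since strong $L^1(0,T;H)$-convergence of $u_n$ cannot control $G(u_n)$ in $\call_2(l^2,H)$; only the weaker \eqref{h5c} is available, which is exactly why the framework of Theorem \ref{main} (with its weak-convergence treatment of the stochastic integral) is indispensable here and a classical strong-convergence argument would fail.
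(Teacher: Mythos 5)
Your proposal is correct and, in substance, it is the paper's own proof: \textbf{(H1)}, \textbf{(H3)}, \textbf{(H4)} are inherited from Theorem \ref{ma2pla}; \textbf{(H2)} follows by adding \eqref{G2} to the monotonicity estimate for $A$ and $B$ (the paper invokes \eqref{ma2pla p7} with $p=2$, you use its precursors \eqref{ma2pla p3} and \eqref{ma2pla p1a}), with $\delta_5\le\frac12 C(n,2,s)$ and $\sum_i\gamma_i\le 2\delta_3$ performing exactly the two absorptions you describe; \textbf{(H5)} follows by adding \eqref{G1} to \eqref{ma2pla p9a}, giving $\gamma_{2,1}=\delta_4$, $\gamma_{2,2}=2\sum_i\beta_i$, $\gamma_{2,3}=0$, all $\kappa_j=1$, so that \eqref{h5b} reduces precisely to $\delta_4<C(n,2,s)$ and $\sum_i\beta_i<\delta_1$; and \eqref{h5c} for the combined coefficient comes from \eqref{h5d} for $B$ together with \eqref{G3} for $G$. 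Your closing remark that the transport part $G$ satisfies only \eqref{h5c} and not \eqref{h5d} is also exactly the point the paper is making.

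The one place where you genuinely diverge is the formulation of the combined noise, and there your version is the more careful one. The paper keeps the single noise space $l^2$ and the literal coefficient $B+G$, but then verifies \textbf{(H2)} and \textbf{(H5)} by bounding the sums $\|B\|^2_{\call_2(l^2,H)}+\|G\|^2_{\call_2(l^2,H)}$ (see \eqref{ma3pla p7} and the display preceding \eqref{ma3pla p9}); these sums do not dominate $\|B+G\|^2_{\call_2(l^2,H)}$, since the cross terms $2\sum_i\big(\sigma_i(t,\cdot,v),\, g_i(-\Delta)^{\frac s2}v\big)_H$ are discarded, and controlling them by Cauchy--Schwarz/Young would replace $\delta_4$ and $2\sum_i\beta_i$ by $(1+\eps)\delta_4$ and $(1+\eps^{-1})\,2\sum_i\beta_i$ (or vice versa), which the sharp hypotheses \eqref{ma3pla 1} cannot absorb. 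Your device of enlarging the noise space to $U=l^2\oplus l^2$ with coefficient $(B,G)$, i.e.\ driving $B$ and $G$ by independent cylindrical Wiener processes, makes the squared Hilbert--Schmidt norms split exactly additively, so the computation closes with precisely the stated constants. In short, your proof is complete as written for the equation with two independent noise components, which is also what the paper's computation actually establishes; the literal same-noise reading of ``$B$ replaced by $B+G$'' would need either your reinterpretation or additional slack in \eqref{ma3pla 1} to handle the cross terms.
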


\begin{proof}
We 
  only need to verify {\bf (H2)}
and {\bf (H5)} 
since {\bf (H1)}, {\bf (H3)} and {\bf (H4)}
are the same as   in Theorem \ref{ma2pla}.
 
By \eqref{ma2pla p7}
with $p=2$, \eqref{G2} and \eqref{ma3pla 1}
we get,
    for $ t\in [0,T]$ and $u, v\in V$,
	$$
	2( A(t,u) -A(t,v),
	u-v)_{(V^*, V)}
	+\|  B(t,u) -B(t,v) \|^2_{\call_2(l^2, H)}
	+\|  G(t,u) -G(t,v) \|^2_{\call_2(l^2, H)}
	$$
	   \be \label{ma3pla p7}
	\le   \left (   c_1+ \varphi_4 (t) +
	 2
	\| \varphi_3(t)\|_{L^\infty(\o)}
 \right ) \| u-v \|^2_H,
\ee
	 where $\varphi_3\in L^1(0,T;
	L^\infty(\o))$ and $\varphi_4\in L^1[0,T])$.
	 By \eqref{ma3pla p7},   we see  that
	 {\bf (H2)}  is satisfied
	 with the following parameters:
	 $g= c_1 +\varphi_4 +  2\| \varphi_3(\cdot)\|_{   
	L^\infty(\o) }$ and 
	 	   \be \label{ma3pla p7a}
	 q_1 =p, \ q_2 =q,\
	 q_3= 2,\
	 \alpha =0,\ \alpha_1 =0,\ 
	 \theta_j=   
	 \beta_{1, j}=\beta_{2,j} =0,
	 \quad  \ j=1,2,3.
\ee
	 	By \eqref{ma2pla p9a} and \eqref{G1}
		we find that
		for $t\in [0,T]$
		and $v\in V$,
		$$
		\| B(t, v)\|^2_{\call_2(l^2, H)}
		+\| G(t, v)\|^2_{\call_2(l^2, H)}
		$$
		$$
		\le  2 \sum_{i=1}^\infty 
		 \|\sigma_{1,i}  (t) \|^2_H
		 + c_2
		 + 2\sum_{i=1}^\infty \beta_i
		 \int_\o |v(x)|^q dx
		 +\delta_4 \| v \|^2_{V_1}
		 + \varphi_4 (t) (1+ \|v\|_H^2),
		 $$
		   which along with 	\eqref{sig2}
	shows that  	{\bf (H5)} is satisfied with
	the parameters:
	   \be \label{ma3pla p9}
	\gamma_{2,1} =
	\delta_4, \  \gamma_{2,2}=
	  2  \sum_{i=1}^\infty \beta_i,  \
   \gamma_{2,3} = 0.
\ee

By  
	\eqref{ma1pla p8a},  \eqref{ma3pla p7a}
	 and \eqref{ma3pla p9}
we find that, 
in the present case, the constants in \eqref{h5c}
are given by
$ 
\kappa_1 = 
\kappa_2=\kappa_3 =1,
$ 
and  the condition \eqref{h5b}
reduces to:
$$
 \sum_{i=1}^\infty
\beta_i  < \delta_1, \quad \delta_4 < C(n,2,s),
$$
which is implied  by \eqref{ma3pla 1}.

Since 
$B$ is continuous from $V$ to $\call_2(U,H)$ and
  satisfies  \eqref{h5c}, which along with \eqref{G3}
  implies that
  $B+G$ also  \eqref{h5c}. This completes the proof.
  \end{proof}

  \section{Appendix}

For reader's convenience,  we
  recall the following
Skorokhod-Jakubowski representation theorem
from \cite{brz1, jak2} on a topological space instead of
metric space.

\begin{prop}
\label{prop_sj}
 Suppose  $X$ 
 is  a topological space, and there
  exists a sequence
of continuous functions $g_n: X\rightarrow \mathbb{R}$ that separates points of $X$.
If    $\{\mu_n\}_{n=1}^\infty$
is a tight sequence of
   probability measures
  on $(X, \mathcal{B} (X) )$, then
  there exists a subsequence
  $\{\mu_{n_k}\}_{k=1}^\infty$
  of  $\{\mu_n\}_{n=1}^\infty$,
    a probability space $(
  \widetilde{
  \Omega},
  \widetilde{\mathcal{F}},
  \widetilde{\mathrm{P}})$, $X$-valued random variables
   $v_{ k}$  and $v$    such that
   the law of $v_{ k}$ is
   $\mu_{n_k}$ for all $k\in \mathbb{N}$ and
      $v_{ k} \rightarrow v$  $\widetilde{
  \mathrm{P}}$-almost surely
   as $k \rightarrow \infty$.
\end{prop}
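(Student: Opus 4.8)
The plan is to reduce the statement, via the separating sequence, to the classical Skorokhod representation theorem on the Polish space $\R^\N$, and then to transfer the resulting almost sure convergence back to $X$. First I would replace each $g_n$ by $\tilde g_n = \arctan \circ\, g_n$, which is still continuous, bounded, and point-separating, and assemble the map $f = (\tilde g_n)_{n=1}^\infty : X \to \R^\N$. Since the $\tilde g_n$ separate points, $f$ is injective, and it is continuous when $\R^\N$ carries the product topology. Because a continuous injection restricted to a compact set is a homeomorphism onto its (compact) image, $f$ maps every compact $K \subseteq X$ homeomorphically onto $f(K)$; in particular $f$ carries the tightness of $\{\mu_n\}$ over to the pushforward measures $\nu_n := \mu_n \circ f^{-1}$ on $\R^\N$.

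Next, since $\R^\N$ is separable and completely metrizable, the tight sequence $\{\nu_n\}$ is relatively compact by Prokhorov's theorem, so along a subsequence $\nu_{n_k} \to \nu$ weakly, and the classical Skorokhod theorem furnishes a probability space $(\tilde\Omega, \tilde{\calf}, \tilde\p)$ and $\R^\N$-valued random variables $\xi_k, \xi$ with $\mathrm{law}(\xi_k) = \nu_{n_k}$, $\mathrm{law}(\xi) = \nu$, and $\xi_k \to \xi$ $\tilde\p$-almost surely. I would then identify the carrier of these variables: fix compacts $K_m \subseteq X$ with $\mu_n(K_m) > 1 - 1/m$ for all $n$, and set $A_m = f(K_m)$, which is compact, hence closed, in $\R^\N$. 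Then $\nu_{n_k}(A_m) = \mu_{n_k}(K_m) > 1 - 1/m$, and by the portmanteau theorem $\nu(A_m) \ge \limsup_k \nu_{n_k}(A_m) \ge 1 - 1/m$. Hence both $\xi_k$ and $\xi$ lie in $\bigcup_m A_m \subseteq f(X)$ almost surely. Since $f^{-1}$ is continuous, and therefore Borel, on each compact $A_m$, it is Borel on the $\sigma$-compact set $\bigcup_m A_m$, so $v_k := f^{-1}(\xi_k)$ and $v := f^{-1}(\xi)$ are well-defined $X$-valued random variables, and the law of $v_k$ equals $\mu_{n_k}$ because $f$ is a Borel isomorphism of $X$ onto its image.

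The remaining, and genuinely hard, step is the almost sure convergence $v_k \to v$ in $X$. The elementary part is that, once the trajectory $\{v_k(\omega)\}$ is known to be precompact in $X$, any cluster point $y$ satisfies $f(y) = \lim_k f(v_k(\omega)) = f(v(\omega))$, whence $y = v(\omega)$ by injectivity, forcing $v_k(\omega) \to v(\omega)$. The difficulty is precisely to produce, for $\tilde\p$-almost every $\omega$, a single compact subset of $X$ containing the whole tail of $\{v_k(\omega)\}$: convergence of $\xi_k(\omega)$ in the metric space $\R^\N$ does not by itself confine the preimages to one compact, because $f(X)$ need not be Borel and $f^{-1}$ need not be continuous across distinct $A_m$. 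This is where the uniform estimates $\nu_{n_k}(A_m) > 1 - 1/m$ must be exploited at the path level — the heart of Jakubowski's argument in \cite{jak2} (see also \cite{brz1}) — to pin the approximating paths into common compact sets and so transfer the convergence from $\R^\N$ to the non-metrizable space $X$. I expect this transfer to be the main obstacle; the rest amounts to routine soft-topology and measure-theoretic bookkeeping.
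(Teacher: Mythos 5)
The paper itself gives no proof of this proposition; it is recalled verbatim from \cite{jak2} (see also \cite{brz1}), so the only fair comparison is against Jakubowski's argument. Your opening moves are indeed the same as his: compose the separating functions with $\arctan$, embed $X$ into $\R^{\N}$ by $f=(\tilde g_n)$, note that $f$ restricted to each compact $K_m$ is a homeomorphism onto its image (legitimate, since the existence of a separating family of continuous functions makes $X$ Hausdorff), push the measures forward, and use Prokhorov plus the portmanteau theorem to see that all the mass lives on the $\sigma$-compact set $\bigcup_m f(K_m)$, where $f^{-1}$ is Borel. Your identification of the law of $v_k$ is also correct, modulo the small overstatement that $f$ is a Borel isomorphism of $X$ onto its image: it is one only of $\bigcup_m K_m$ onto $\bigcup_m A_m$, which suffices because $\mu_{n_k}\bigl(\bigcup_m K_m\bigr)=1$.

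The genuine gap is the one you flag yourself and then leave open: nothing in your construction yields the almost sure convergence $v_k\to v$ in $X$, and this cannot be repaired by soft arguments applied to an \emph{arbitrary} Skorokhod representation of the pushforward laws. The uniform bounds $\widetilde{\p}(\xi_k\in A_m)\ge 1-1/m$ give, via Fatou, only that $\xi_k(\omega)\in A_m$ for infinitely many $k$ (a random subsequence), never that the whole tail of the path lies in one $A_m$; so $\xi_k(\omega)\to\xi(\omega)$ in $\R^{\N}$ is compatible with the preimages $v_k(\omega)$ wandering through $K_{m'}\setminus K_m$ for ever larger $m'$, where $f^{-1}$ has no continuity, and hence failing to converge, or even to be precompact, in $X$. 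Jakubowski's proof departs from yours exactly here: he does not take the representation handed to him by the classical theorem, but builds one adapted to the exhausting compacts, decomposing each measure as a countable convex combination of pieces carried by the individual $K_m$, passing to a further subsequence along which all weights and all components converge, and constructing the random variables on the Lebesgue interval so that an auxiliary coordinate first selects the index $m$; this forces, for almost every $\omega$, the entire tail of $\{v_k(\omega)\}$ into a single compact $K_{m(\omega)}$, on which $f$ is a homeomorphism and your cluster-point argument then closes the proof. Without that device, what you have proved is only the existence of $v_k$, $v$ with the correct laws — not the a.s. convergence, which is the entire content of the theorem.
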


\end{document}